 \definecolor{darkgreen}{HTML}{336633}
 \definecolor{darkred}{HTML}{993333}
\newcommand{\arxiv}[1]{\href{http://arxiv.org/abs/#1}{\tt
    arXiv:\nolinkurl{#1}}}
\theoremstyle{plain}
\newtheorem{thm}{Theorem}%[section]
\newtheorem*{thm*}{Theorem}
\newtheorem*{thmA}{Theorem A}
\newtheorem*{thmB}{Theorem B}
\newtheorem*{thmC}{Theorem C}
\newtheorem{lem}[thm]{Lemma}
\newtheorem{prop}[thm]{Proposition}
\newtheorem{cor}[thm]{Corollary}
\newtheorem{df-prop}[thm]{Definition-Proposition}
\theoremstyle{definition}
\theoremstyle{remark}
\newtheorem{rem}[thm]{Remark}
\newtheorem{ex}[thm]{Example}
\def\onto{\twoheadrightarrow}
\def\mod{\operatorname{-mod}\nolimits}
\def\Hom{\operatorname{Hom}\nolimits}
\def\Res{\operatorname{Res}\nolimits}
\def\Ind{\operatorname{Ind}\nolimits}
\def\gl{\mathfrak{gl}}
\def\la{\lambda}
\def\pn{\mf{pe} (n)}
\def\ov{\overline}
\newcommand{\mc}{\mathcal}
\newcommand{\mf}{\mathfrak}
\newcommand{\C}{\mathbb C}
\newcommand{\oo}{{\ov 0}}
\newcommand{\oa}{{\bar 0}}
\newcommand{\ob}{{\bar 1}}
\newcommand{\vare}{\epsilon} %%%% change-original \vere=\varepsilon
\newcommand{\ad}{\mathrm{ad}}
\newcommand{\cP}{\mathcal{P}}
\newcommand{\cF}{\mathcal{F}}
\newcommand{\fg}{\mathfrak{g}}
\newcommand{\fb}{\mathfrak{b}}
\newcommand{\fh}{\mathfrak{h}}
\newcommand{\fn}{\mathfrak{n}}
\newcommand{\n}{\mathfrak{n}}
\newcommand{\mZ}{\mathbb{Z}}
\newcommand{\cO}{\mathcal{O}}
\newcommand{\mC}{\mathbb{C}}
\newcommand{\h}{\mathfrak{h}}
\newcommand{\Coind}{{\rm Coind}}
\newcommand{\g}{\mathfrak{g}}
\newcommand{\fl}{\mathfrak{l}}
\newcommand{\fp}{\mathfrak{p}}
\newcommand{\fu}{\mathfrak{u}}
\newcommand{\Real}{\mathrm{Re}}
\newcommand{\cL}{\mathcal{L}}
 \def\tN{{\widetilde{\mc N}}}
  \newcommand{\add}{{\mathrm{add}}}
  \newcommand{\WG}{{\widetilde{\Gamma}_\zeta}}
  \newcommand{\Whoa}{{\text{Wh}_\zeta}}
   \newcommand{\Whob}{{\widetilde{\text{Wh}}_\zeta}}
\begin{document}

\numberwithin{equation}{section}

\title[Whittaker modules for classical Lie superalgebras]{Whittaker modules for classical Lie superalgebras}

\author{Chih-Whi Chen}
\date{}

\begin{abstract}   
	We classify simple Whittaker modules for classical Lie superalgebras in terms of their parabolic decompositions.	We establish a type of  Mili{\v{c}}i{\'c}-Soergel  equivalence of a category of Whittaker modules and a category of Harish-Chandra bimodules. For classical Lie superalgebras of type I, we reduce the problem of composition factors of standard Whittaker modules to that of Verma modules in their BGG categories $\mc O$. As a consequence, the composition series of standard Whittaker modules over the general linear Lie superalgebras $\gl(m|n)$ and the ortho-symplectic Lie superalgebras $\mf{osp}(2|2n)$ can be computed via the  Kazhdan-Lusztig combinatorics. % We show that the composition factors of standard Whittaker modules over the general linear Lie superalgebras $\gl(m|n)$ and the ortho-symplectic Lie superalgebras $\mf{osp}(2|2n)$ are computed by Kazhdan-Lusztig combinatorics. For the periplectic Lie superalgebras $\pn$, we reduce this problem to the problem of irreducible characters of its category $\mc O$.  %We then obtain equivalences of blocks of Whittaker modules. 

	% We reduce the problem of composition factors of standard Whittaker modules over classical Lie superalgebras to that of Verma modules in its BGG category $\mc O$. Therefore for standard Whittaker modules over the general linear Lie superalgebras $\gl(m|n)$ and the ortho-symplectic Lie superalgebras $\mf{osp}(2|2n)$ are computed by Kazhdan-Lusztig combinatorics

	%over the general linear Lie superalgebras $\gl(m|n)$ and the ortho-symplectic Lie superalgebras $\mf{osp}(2|2n)$ are computed by Kazhdan-Lusztig combinatorics. For the periplectic Lie superalgebras $\pn$, we reduce this problem to the problem of irreducible characters of its category $\mc O$. 	We establish an  Mili{\v{c}}i{\'c}-Soergel type equivalence of blocks of Whittaker categories and Harish-Chandra bimodules. %We then obtain equivalences of blocks of Whittaker modules. 
		
\end{abstract}

\maketitle

\tableofcontents 

\noindent
\textbf{MSC 2010:} 17B10 17B55  

\noindent
\textbf{Keywords:} Lie algebra; 
Lie superalgebra; module; Whittaker module; Harish-Chandra bimodule;  Kazhdan-Lusztig combinatorics.
\vspace{5mm}

\section{Introduction}\label{sec1}

\subsection{}
In the classical paper \cite{Ko78}, Kostant introduced and classified a family of simple modules $Y_{\xi,\eta}$ over finite-dimensional complex semisimple Lie algebras. Motivated by the study of Whittaker models, he found the condition of the existence of a Whittaker vector for  a simple module of linear  semisimple Lie group.  Subsequently, a systematic construction of the {\em Whittaker modules} in the category $\mc N$ over  finite-dimensional complex semisimple Lie algebras, containing Kostant's  simple modules and modules in the BGG category $\mc O$,  was studied by McDowell in \cite{Mc,Mc2} and by Mili{\v{c}}i{\'c} and Soergel  in \cite{MS,MS2}. 

It is known in \cite{MS} that $\mc N$ has certain {\em standard Whittaker modules} parametrized by cosets in the Weyl group of a certain subgroup (see also \cite{Mc}). In particular, an equivalence of certain categories of Whittaker modules and Harish-Chandra bimodules was established by  Mili{\v{c}}i{\'c} and Soergel in \cite[Theorem 5.1]{MS}. As an application, the problem of composition factors of the standard Whittaker modules was partially solved in \cite[Section 5]{MS}. Namely, this solution follows from the composition factors  of Verma modules in the BGG category $\mc O$, which is reduced to the Kazhdan-Lusztig conjectures (see, e.g., \cite{BB, BK,KL1}). Around the same time, Backelin  in \cite{B}  developed a complete solution to the same problem using {\em Whittaker functors}. %of composition factors in standard Whittaker modules over  finite-dimensional complex semisimple Lie algebras.
  Consequently, this problem can be completely calculated by the Kazhdan-Lusztig combinatorics (see \cite[Theorem 6.2]{B}). 
 %, which by now are a theorem .

   There have been numerous attempts to  obtain results toward the study of Whittaker modules for Lie algebras and related algebras that possess a structure similar to triangular decomposition; see, e.g., \cite{ALZ,Be,Chri,CDH,GLZ1,LZ1,LWZ,O1,OW,Sev,Wa2} and references therein. Inspired by these activities, Batra and Mazorchuk developed in \cite{BM} a general framework for  Whittaker modules. More recently, Coulembier and Mazorchuk studied in \cite{CoM} the extension fullness of the {\em Whittaker categories}.

 \subsection{} While there are now complete solutions to the problem of composition factors in standard Whittaker modules for the semisimple Lie algebras, the Whittaker modules for Lie superalgebras were not investigated until recently. In a recent exposition \cite{BCW}, Bagci, Christodoulopoulou and Wiesner  initiated the study of Whittaker modules over Lie superalgebras in a systematic fashion, where some simple and standard Whittaker modules over basic classical Lie superalgebras of type I were also constructed. In particular, these standard  Whittaker modules  are of finite length. Therefore it is natural to classify simple modules and study the composition series of modules in the category of Whittaker modules. There have also been a variety of work done to study Whittaker modules and W-algebras over basic Lie superalgebras; see, e.g., \cite{BG2,Xi,ZS}.
 
  We study several aspects of Whittaker modules over classical Lie superalgebras. Namely, the present paper attempts to classify simple Whittaker modules  and to construct a type of Mili{\v{c}}i{\'c}-Soergel equivalence between a category of Whittaker modules and a corresponding category of Harish-Chandra bimodules and to compute composition series of standard Whittaker modules. %answer these questions as follows.

\subsection{} Recall that a finite-dimensional Lie superalgebra $\fg=\fg_{\oa}\oplus\fg_{\ob}$ is called  {\em classical} if the restriction of the adjoint representation of $\fg$ to the Lie algebra $\fg_{\oa}$ is semisimple.   The Killing-Cartan type classification of finite-dimensional complex simple Lie superalgebras has been established by Kac in his celebrated paper \cite{Ka1,Ka2}. One of the most interesting subclass of Kac's list is the following series of classical Lie superalgebras:
\begin{align} 
&\gl(m|n),~\mf{sl}(m|n),~\mf{psl}(n|n),~\mf{osp}(m|2n),~D(2,1|\alpha),G(3),F(4),\label{Kaclist1} \\ 
&\mf{p}(n),~[\mf{p}(n),\mf{p}(n)],~\mf{q}(n),~\mf{sq}(n),~\mf{pq}(n),~\mf{psq}(n).\label{Kaclist2}
\end{align}  We refer to \cite{ChWa12,Mu12} for more details of these Lie superalgebras.

A classical Lie superalgebra $\g$ is called {\em type I}, if it has  a compatible $\mathbb Z$-gradation of the form $\g= \g_{-1}\oplus \mf g_0 \oplus \g_1,$ with $\g_{\bar 0}=\g_0$, $\g_{\bar 1}=\mf g_{-1}\oplus \g_1$. In particular, we have the following Lie superalgebras of type I from the list \eqref{Kaclist1}-\eqref{Kaclist2}:
\begin{align} 
&(\text{Type } {\bf A}):~\gl(m|n),~\mf{sl}(m|n),~\mf{psl}(n|n), \label{eq::claA}  \\
&(\text{Type } {\bf C}):~\mf{osp}(2|2n),\label{eq::claC} \\
&(\text{Type } {\bf P}):~\mf{p}(n),~[\mf{p}(n),\mf{p}(n)]. \label{eq::claP}
\end{align}

\subsection{}  To explain the contents of the present paper in more detail, we start by explaining our precise setup. Let $\mf g$ be a classical Lie superalgebra. Now we  fix a Cartan subalgebra $\fh_{\oa}$ of $\fg_{\oa}$. Then we have a weight space decomposition  with the set $\Phi\subset\fh_{\oa}^\ast$ of  roots
$\fg\;=\;\bigoplus_{\alpha\in\Phi\cup  \{0\}}\fg^\alpha,$ where  $\fg^\alpha=\{X\in\fg\,|\, [h,X]=\alpha(h)X,\,\mbox{ for any $h\in\fh_{\oa}$}\}$. The  subalgebra $\fh:=\fg^0$ is usually referred to  as the {\em Cartan subalgebra} of $\g$; see also \cite[Section 1.3]{CCC}. 

%For any $\fh_{\oa}$-submodule $V$ of $\fg$, we write $\Phi(V)$ for the subset of $\Phi$ of weights appearing in $V$. In particular, we write $\Phi_{\oa}=\Phi(\fg_{\oa})$ and $\Phi_{\ob}=\Phi(\fg_{\ob})$.

We recall the notion of {\em parabolic decomposition} of classical Lie superalgebras from \cite[\S 2.4]{Ma} (see also  \cite{CCC,DMP,Mu12} and  references therein). For given $H\in\fh_{\oa}$ we can define subalgebras of $\fg$
\begin{equation}\label{deflu}\fl:=\bigoplus_{\Real \alpha(H)=0} \fg^\alpha,\quad \fu:=\bigoplus_{\Real \alpha(H)>0} \fg^\alpha, \quad \fu^-:=\bigoplus_{\Real \alpha(H)<0} \fg^\alpha,\end{equation} where $\Real(z)$ denotes the real part of $z\in \mathbb C$.
  Such a decomposition $\g=\fu^-\oplus\fl\oplus\fu$ gives rise to a  corresponding  {\em parabolic subalgebra}  $\mf p =\mf l\oplus \mf u$. We refer to $\mf l$ as the  {\em Levi subalgebra} of $\g$.  If $\mf l =\mf g^0= \mf h$, then \eqref{deflu} leads to a {\em triangular decomposition}. In this case, we write
 $\mf n:=\mf u,~\mf n^-:=\mf u^-$ and define the {\em Borel subalgebra} $\mf b:=\fh\oplus\fn$ (see also \cite[Section 3.3]{Mu12}). 
  
  Throughout the present paper, we fix a triangular decomposition $\mf g=\mf n^-\oplus \mf h \oplus \mf n$ with Borel subalgebra $\mf b=\mf h\oplus \mf n$. Then it gives a triangular decomposition 
  \begin{align}
  &\mf g_\oa=\mf n^-_\oa\oplus \mf h_\oa \oplus \mf n_\oa \label{eqtraigoa}
  \end{align} of $\mf g_\oa$ as well. Unless mention otherwise,  we choose the vector $H$ such that  $\mf p\supseteq \mf b$, $\mf l=\mf l_\oa$ and $\alpha(H)\in \mathbb R$, for all $\alpha \in \Phi$. We may note that $\mf h=\mf h_\oa$. %, in a given parabolic decomposition of $\g$. We also assume  that   the Levi subalgebra $\fl$ is purely even in our parabolic decompositions, that is, $\fl=\fl_\oa$. 
  See also \cite[Section 1.3]{CCC} for the definition of {\em reduced} parabolic   subalgebras.  %$\fl(H)=\fg^0=:\fh$ and $\fh=\fh_\oa$.

\subsection{} We denote by $\mf g$-Mod and $\mf g_\oa$-Mod the category of all $\mf g$-modules and $\mf g_\oa$-modules, respectively. There is a category $\mc W(\g,\mf n)$ consisting of $\g$-modules that are locally finite over $\mf n$ as considered in \cite{BCW}. To study simple modules in $\mc W(\g,\mf n)$, we propose a full subcategory  $\widetilde{\mc N}$  of $\mc W(\g,\mf n)$, which contains modules in the BGG category $\mc O$.  That is, the category  $\widetilde{\mc N}$  consists of finitely-generated $\g$-modules that are locally finite over $\mf n$ and over the center $Z(\mf g_\oa)$ of the universal enveloping algebra $U(\mf g_\oa)$. This is thus the category of $\g$-modules that are restricted to $\g_\oa$-modules in the category $\mc N$ of \cite{MS}. %We remark that our $\widetilde{\mc N}$ is a full subcategory of $\mc W(\mf g,\mf n)$.
We will prove that $\widetilde{\mc N}$ and $\mc W(\g, \mf n)$ have the same collection of simple objects; see Proposition \ref{lem::simples} and Remark \ref{Rmk2}. In the present paper, the modules in $\widetilde{\mc N}$ are called  {\em Whittaker modules}.

We set $\mc I:=\{\zeta\in \mf n^\ast|~\zeta([\mf n_\oa,\mf n_\oa])=0, ~\zeta(\mf n_\ob)=0\}.$ Denote by $\Phi(\mf n_\oa)$ the set of roots in $\mf n_\oa$. 
For any $\zeta\in \mc I$, we define a subset of simple roots of $\mf g_\oa$:	
\begin{align}
&\Phi_\zeta:=\{\alpha \in \Phi(\mf n_\oa)|\zeta(\mf g_\oa^\alpha)\neq 0\}.%~\Pi_\zeta:=\{\alpha \in \Pi|\zeta(\mf g_\oa^\alpha)\neq 0\}.
\end{align} %We denote by $\Pi_\zeta$ the set of simple roots of $\mf g_\oa$ that lie in $\Phi_\zeta$. 
This gives rise to a parabolic decomposition  of $\mf g_\oa$ with corresponding %parabolic subalgebra $\mf p^f_\oa$ and 
Levi subalgebra $\mf l_\zeta$ generated by $\mf g_\oa^\alpha$'s ($\alpha\in \Phi_\zeta$) and $\mf h$.  %:=\mf h \oplus \bigoplus_{ \alpha \in \Phi_\zeta} \mf g^\alpha_\oa$. 

Let $\widetilde{\mc N}(\zeta)$ be the full subcategory of $\widetilde{\mc N}$ consisting of modules $M \in \tN$  such that $x-\zeta(x)$  acts locally nilpotently on $M$, for any $ x\in \mf n_\oa$. In the case when $\mf g=\mf g_\oa$ we write $\mc N(\zeta)$ instead, which has been considered in \cite[Section 1]{MS}. 
%\mc N(f):=\{N \in \mc N |~x-f(x) \text{ acts locally nilpotently on }N, \text{for any } x\in \mf n_\oa\}. 
We then have a decomposition $\widetilde{\mc N} =\bigoplus_{\zeta\in \mc I} \widetilde{\mc N}(\zeta)$ by  \cite[Theorem 3.2]{BCW}. 
%Our first main result is the following, which generalizes results in \cite[Section 4]{BCW} where the case of some basic classical Lie superalgebras of type I was considered.

Following notations in \cite{MS}, we denote the family of Kostant's simple modules $Y_{\xi,\eta}$ by $Y_\zeta(\la, \zeta),$ where $\la \in \h^\ast$ and $\zeta$ is a character on $\mf n_\oa$. In  \cite[Section 4]{BCW}, some interesting partial results concerning the simple Whittaker modules of $\widetilde{\mc N}$ for basic classical Lie superalgebras of type I were obtained, which we complete in Theorem \ref{mainthm1typeI} and the following theorem:
\begin{thmA} Let $\mf g$ be an arbitrary classical Lie superalgebra. Suppose that $\mf l_\zeta$ is a Levi subalgebra in a parabolic decomposition $\mf g=  {\mf u}_\zeta^- \oplus \mf l_\zeta \oplus  {\mf u}_\zeta$ of $\mf g$. Then, for any $\la \in \h^\ast$, the $\mf g$-module $\widetilde{M}(\la, \zeta)$ that is parabolically induced from $Y_\zeta(\la, \zeta)$ has simple top, denoted $\widetilde{L}(\la, \zeta)$, and the correspondence gives rise to a bijection between the sets of isomorphism classes of simple $\mf l_\zeta$-modules of the form $Y_\zeta(\la, \zeta)$ and simple $\mf g$-modules of $\widetilde{\mc N}(\zeta)$. 
\end{thmA}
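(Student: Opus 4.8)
The plan is to realise $\widetilde M(\la,\zeta)$ as a graded $\fg$-module carrying $Y_\zeta(\la,\zeta)$ as its top graded component, and to combine this with Frobenius reciprocity and Kostant's theorem for the reductive Lie algebra $\fl_\zeta$. First one checks that $\widetilde M(\la,\zeta)$ lies in $\widetilde{\mc N}(\zeta)$: by the PBW theorem $\widetilde M(\la,\zeta)\cong U(\fu_\zeta^-)\otimes_{\C}Y_\zeta(\la,\zeta)$ as a vector space, so it is nonzero and, being cyclically generated by the Whittaker vector of $Y_\zeta(\la,\zeta)$, finitely generated over $\fg$; since $\zeta$ vanishes on $\fu_\zeta$ (by the definition of $\Phi_\zeta$ and $\fl_\zeta$), $\fu_\zeta$ acts locally nilpotently and $x-\zeta(x)$ acts locally nilpotently for $x\in\fn_\oa$; and using $U(\fg)=U(\fg_\oa)\otimes\Lambda(\fg_\ob)$ together with a filtration, $\widetilde M(\la,\zeta)|_{\fg_\oa}$ is an iterated extension of modules parabolically induced from objects of $\mc N$, hence lies in $\mc N$ and in particular is $Z(\fg_\oa)$-locally finite. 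Consequently every quotient of $\widetilde M(\la,\zeta)$ also lies in $\widetilde{\mc N}(\zeta)$.

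For the simple top and injectivity, the parabolic decomposition $\fg=\fu_\zeta^-\oplus\fl_\zeta\oplus\fu_\zeta$ grades $\fg$ by a finitely generated free abelian group --- by the value on $\fg$ of a real linear functional defining the parabolic, realised by the grading element in the centre of $\fl_\zeta$ when one exists in $\fg$ (for $\fg=\mathfrak{psl}(n|n)$ with $\fl_\zeta=\fg_\oa$, where no such element lies in $\fg$, one instead uses the underlying $\mathbb Z$-grading of the Type~I structure, or passes to $\mathfrak{gl}(n|n)$) --- so that $\fl_\zeta$ has degree $0$, $\fu_\zeta$ positive degrees and $\fu_\zeta^-$ negative degrees. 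Inflating $Y_\zeta(\la,\zeta)$ to degree $0$ and inducing, $\widetilde M(\la,\zeta)$ becomes graded with degree-$0$ component $1\otimes Y_\zeta(\la,\zeta)\cong Y_\zeta(\la,\zeta)$, all other components in strictly lower degrees, and $1\otimes Y_\zeta(\la,\zeta)$ generating the module. A proper submodule $K$ is graded, and $K\cap(1\otimes Y_\zeta(\la,\zeta))$ is an $\fl_\zeta$-submodule of the simple module $Y_\zeta(\la,\zeta)$; it cannot be all of it (else $K$ contains a generating set, so $K=\widetilde M(\la,\zeta)$), hence it is $0$. Therefore the sum of all proper submodules has zero degree-$0$ component, so it is proper, and is the unique maximal submodule; this gives the simple top $\widetilde L(\la,\zeta)$, which inherits a grading with degree-$0$ component $Y_\zeta(\la,\zeta)$. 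Since this grading and its maximal degree are intrinsic to the $\fg$-module $\widetilde L(\la,\zeta)$ (after the harmless reduction in the exceptional case), the isomorphism class of $Y_\zeta(\la,\zeta)$ is recovered from that of $\widetilde L(\la,\zeta)$; hence $Y_\zeta(\la,\zeta)\mapsto\widetilde L(\la,\zeta)$ is well defined and injective on isomorphism classes.

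For surjectivity, let $L$ be a simple object of $\widetilde{\mc N}(\zeta)$. Since $\fu_\zeta$ acts locally nilpotently on $L$, an Engel argument gives $L^{\fu_\zeta}\neq 0$; it is an $\fl_\zeta$-module, locally finite over the nilradical $\fn\cap\fl_\zeta$ of $\fl_\zeta$, on which $\zeta$ restricts to a \emph{non-degenerate} character. One also checks, using the grading element, that every PBW monomial of a central element of $U(\fg_\oa)$ whose $\fu_\zeta^-$-part is nontrivial has a nontrivial $\fu_\zeta$-part and thus annihilates $L^{\fu_\zeta}$; hence $Z(\fg_\oa)$ preserves $L^{\fu_\zeta}$ and acts there through the parabolic Harish-Chandra homomorphism $Z(\fg_\oa)\to Z(\fl_\zeta)$. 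As $Z(\fl_\zeta)$ is a finite module over the image and $Z(\fg_\oa)$ acts locally finitely on $L$, the centre $Z(\fl_\zeta)$ acts locally finitely on $L^{\fu_\zeta}$, so Kostant's classification of Whittaker modules for the non-degenerate character $\zeta$ of $\fl_\zeta$ applies and produces a simple $\fl_\zeta$-submodule of $L^{\fu_\zeta}$ isomorphic to $Y_\zeta(\la,\zeta)$ for some $\la\in\fh^\ast$. By Frobenius reciprocity $\Hom_\fg(\widetilde M(\la,\zeta),L)\cong\Hom_{\fl_\zeta}(Y_\zeta(\la,\zeta),L^{\fu_\zeta})\neq 0$; as $L$ is simple, such a map is onto, so $L$ is a simple quotient of $\widetilde M(\la,\zeta)$ and therefore $L\cong\widetilde L(\la,\zeta)$, completing the bijection.

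The main obstacle is the surjectivity step, specifically establishing that $L^{\fu_\zeta}$ is $Z(\fl_\zeta)$-locally finite so that Kostant's theorem is applicable; this rests on the parabolic Harish-Chandra homomorphism together with the finiteness of $Z(\fl_\zeta)$ over its image, and on the compatibility of the $\fu_\zeta$-invariants with this homomorphism. A secondary point is running the grading argument uniformly over all classical $\fg$, in particular handling the case $\fg=\mathfrak{psl}(n|n)$ where no grading element lies inside $\fg$ itself.
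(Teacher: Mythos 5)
Your argument is correct in substance, and it splits naturally into two halves relative to the paper. The simple-top and injectivity part is essentially the paper's own argument (and Mili{\v{c}}i{\'c}--Soergel's): the paper also decomposes $\widetilde{M}(\la,\zeta)$ into eigenspaces of the defining element $H$ of the parabolic decomposition, notes that the top eigenspace is $Y_\zeta(\la,\zeta)$ and is missed by every proper submodule, and recovers $Y_\zeta(\la,\zeta)$ from the top eigenspace of $\widetilde{L}(\la,\zeta)$ to separate isomorphism classes; note that your worry about $\mathfrak{psl}(n|n)$ is unnecessary, since the paper's notion of parabolic decomposition is by definition cut out by an element $H\in\fh_\oa\subset\fg$, so under the hypothesis of Theorem A the grading element is always available inside $\fg$. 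Where you genuinely diverge is surjectivity. The paper restricts to $\fg_\oa$: it invokes the known classification of simples in $\mc N(\zeta)$ (Lemma \ref{lem::MS21}), uses $\Ind$--$\Res$ adjunction to realize a simple $S\in\widetilde{\mc N}(\zeta)$ as a composition factor of $\Ind M(\mu,\zeta)$, shows via \cite[Theorem 4.6]{Ko78} that $\Ind M(\mu,\zeta)$ has a filtration by standard Whittaker $\fg$-modules, and then analyzes composition factors of $\widetilde{M}(\la,\zeta)$ through the $H$-eigenspaces. You instead work directly with the simple module $L$: you take $\fu_\zeta$-invariants (nonzero by an Engel-type argument, which works because $L$ is locally finite over $\fn\supseteq\fu_\zeta$), prove $Z(\fl_\zeta)$-local finiteness of $L^{\fu_\zeta}$ via the parabolic Harish-Chandra homomorphism $Z(\fg_\oa)\to Z(\fl_\zeta)$ together with finiteness of $Z(\fl_\zeta)$ over the image, apply Kostant over $\fl_\zeta$ to find $Y_\zeta(\la,\zeta)\subseteq L^{\fu_\zeta}$, and finish by Frobenius reciprocity exactly as the paper does. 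Your route is more self-contained at the superalgebra level and closer to Kostant's original invariants argument, at the price of importing the classical facts about the parabolic Harish-Chandra homomorphism; the paper's route avoids those facts by leaning on $\Res$ into $\mc N(\zeta)$ and the $\fg_\oa$-classification. Two small points to tighten: the claim that $Z(\fg_\oa)$ preserves $L^{\fu_\zeta}$ also uses that $L^{\fu_\zeta}$ is $U(\fl_\zeta)$-stable (which holds since $[\fl_\zeta,\fu_\zeta]\subseteq\fu_\zeta$), since $Z(\fg_\oa)$ need not supercommute with the odd part of $\fu_\zeta$; and Kostant's structure theorem is stated for $Z(\fl_\zeta)$-finite modules, so one should pass to a nonzero finitely generated (hence $Z(\fl_\zeta)$-finite) $\fl_\zeta$-submodule of $L^{\fu_\zeta}$ before applying it --- both are harmless fixes.
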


 The modules  $\widetilde{M}(\la, \zeta)$ may be called {\em standard Whittaker modules} by analogy with the Whittaker modules of Lie algebras (see, e.g., \cite[Section 1]{MS}).   Suppose that $\mf g$ is a classical Lie superalgebra of type I. It was established in \cite[Theorem A]{CM} that the Kac induction functor $K(-):=\Ind^\g_{\mf g_0\oplus \mf g_{1}}(-)$ gives rise to a bijection between simple $\mf g$-modules  and simple $\mf g_\oa$-modules. Naturally, it turns out that the functor $K(-)$ provides a bijection between simple objects in $\mc N(\zeta)$ and in  $\widetilde{\mc N}(\zeta)$ without assuming that $\mf l_\zeta$ is a Levi subalgebra, leading to a classification of simple objects of the category $\mc W(\g,\mf n)$ from \cite{BCW} too; see Theorem \ref{mainthm1typeI}.

\subsection{} 
 In \cite{MS}, a powerful approach was developed to solve the problem of composition factors of standard Whittaker modules.  Before giving the results, we recall several preparatory definitions and notions from \cite[Section 5]{MS}. Let $\mf g$ be a reductive Lie algebra with $\zeta \in \mc I$. Then $\mf l_\zeta$ is a Levi subalgebra of $\mf g$. Any module $M\in {\mc N}(\zeta)$ decomposes into generalized eigenspaces $M_{\chi^{\mf l}_\mu}$ according to the central characters $\chi^{\mf l}_\mu$ of $\mf l:=\mf l_\zeta$ associated with the weight $\mu\in \h^\ast$. Let $\Upsilon\subset\fh^\ast$ denote the set of integral weights, that is, weights appearing in finite-dimensional $\mf g$-modules.   % For a given $\mu \in \h^\ast$, we set $\chi_{\mu}^{\mf l}: \mc Z(\mf l_f)\rightarrow \mathbb C$ to be the central character for $\mf l_f$ associated with   $\la$. 
	For a given weight $\la\in \h^\ast$, define $\Lambda:= \la +\Upsilon$ and 
	\begin{align}
	&{\mc N}(\Lambda, \zeta) = \{M\in {\mc N}(\zeta)|~M_{\chi^{\mf l}} = 0 \text{ unless }\chi^{\mf l} = \chi^{\mf l}_\mu, \text{ for some }\mu \in \Lambda\}.
	\end{align} 
	Then  ${\mc N}(\zeta)$ decomposes into blocks ${\mc N}(\Lambda, \zeta)$, for cosets  $\Lambda \in \h^\ast /\Upsilon$.

The study of Harish-Chandra bimodules for semisimple Lie algebras goes back at least to the work of Bernstein and Gelfand \cite{BG}. They established an equivalence of the category $\mc O$ and a category of  Harish-Chandra bimodules; see \cite[Theorem 5.9]{BG}.  Mili{\v{c}}i{\'c} and Soergel established in \cite[Theorem 5.1]{MS} an equivalence of $\mc N(\Lambda,\zeta)$ and a corresponding category of Harish-Chandra bimodules. An analogue of  Mili{\v{c}}i{\'c}-Soergel equivalence was also given and  used in \cite{KM}.

%\blue{Q1: The category of Harish-Chandra bimodules $\mc B(I_\la)$ has been studied in \cite[Section 3]{CC} for regular $\la$. It is natural to describe the category $\mc B(I_\la)$ for singular $\la$.}

 %Suppose that $\mf g$ is a classical Lie superalgebra such that $\mf l_\zeta$ is a Levi subalgebra in a parabolic decomposition $\mf g= {\mf u}_\zeta^- \oplus \mf l_\zeta \oplus {\mf u}_\zeta$ of $\mf g$. We will generalize results in \cite[Section 5]{MS} to that for our setup of Lie superalgebras.
 
 %Every module $M\in \widetilde{\mc N}(\zeta)$ decomposes into generalized eigenspaces $M_{\chi^{\mf l}_\mu}$ according to the central characters $\chi^{\mf l}_\mu$ of $\mf l:=\mf l_\zeta$ associated with the weight $\mu\in \h^\ast$, see Section \ref{sect412}. Let $\Upsilon\subset\fh^\ast$ denote the set of integral weights, that is, weights appearing in finite dimensional $\mf g$-modules.   % For a given $\mu \in \h^\ast$, we set $\chi_{\mu}^{\mf l}: \mc Z(\mf l_f)\rightarrow \mathbb C$ to be the central character for $\mf l_f$ associated with   $\la$. For any coset $\Lambda:= \la +\Upsilon$ in $ \mf h^\ast/\Upsilon$ we set
 %\begin{align} &\widetilde{\mc N}(\Lambda, \zeta) = \{M\in \widetilde{\mc N}(\zeta)|~M_{\chi^{\mf l}_\nu} = 0 \text{ unless }\chi^{\mf l}_\nu = \chi^{\mf l}_\mu, \text{ for some }\mu \in \Lambda\}. \end{align}  Then  $\widetilde{\mc N}(\zeta)$ decomposes into blocks $\widetilde{\mc N}(\Lambda, \zeta)$, for $\Lambda \in \h^\ast /\Upsilon$. 

The category $\mc O$ can be defined in a natural way in the setup of Lie superalgebras; see \cite{ChWa12,Mu12}. In particular, we have $\mc O \subset \widetilde{\mc N}(0)$. In  \cite{MaMe12},   Mazorchuk and Miemietz established an analogous equivalence of  the category $\mc O$ and  a category of  Harish-Chandra $(\mf g, \mf g)$-bimodules for Lie superalgebra $\g$. More recently, a similar version of equivalence of  the category  $\mc O$ and a category of Harish-Chandra $(\mf g, \mf g_\oa)$-bimodules has been established in \cite[Theorem 3.1]{CC}, which turns out to be a powerful tool in the study of primitive spectrum of the periplectic Lie superalgebras $\pn$. With this equivalence,  we will generalize results of \cite[Section 5]{MS} to that for our setup of Lie superalgebras in the present paper. 

Consider a classical Lie superalgebra $\mf g$ with $\zeta \in \mc I$. We define the Weyl group $W$ of $\g$  to be the Weyl group of $\g_\oa$. % such that $\mf l_\zeta$ is a Levi subalgebra of $\mf g$ in a parabolic decomposition.
  Let $\la\in \h^\ast$ be dominant under the dot-action of elements in $W$. Denote by $M(\la)$ the Verma module over $\mf g_\oa$ of highest weight $\la$ with respect to the triangular decomposition \eqref{eqtraigoa}. We set  $\mc B_\la$ to be the full subcategory of Harish-Chandra $(\mf g,\mf g_\oa)$-bimodules consisting of objects that are annihilated by some power of the annihilator of $M(\la)$.  For a given dominant weight $\mu \in \h^\ast$,  the stabilizer of $\mu$ under the dot-action of $W$ is denoted by $W_\mu$. Also, we set $W_\zeta \subseteq W$ to be the Weyl group of $\mf l_\zeta$, for any $\zeta\in \mc I.$  

There is a version of the category $\mc N(\la, \zeta)$ adapted to our situation of classical Lie superalgebras, which we denote by $\widetilde{\mc N}(\la, \zeta)$; see Section \ref{sect412}. The following is our second main result: 
\begin{thmB} Let $\zeta\in \mc I$ be such that $W_\la= W_\zeta$. Then $\widetilde{\mc N}(\Lambda, \zeta)$ and $\mc B_\la$ are equivalent.
\end{thmB}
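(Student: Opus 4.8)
The strategy is to mimic the proof of \cite[Theorem 5.1]{MS}, replacing the classical Harish-Chandra bimodule equivalence of Bernstein--Gelfand with its super-analogue \cite[Theorem 3.1]{CC}, which gives an equivalence between a block of the BGG category $\mc O$ for $\mf g$ and a category of Harish-Chandra $(\mf g,\mf g_\oa)$-bimodules. First I would set up the two functors. On one side, from $\mc B_\la$ to $\widetilde{\mc N}(\Lambda,\zeta)$, the functor should be
\begin{equation*}
\mc B \;\longmapsto\; \mc B \otimes_{U(\mf g_\oa)} Y_\zeta(\mu_0,\zeta),
\end{equation*}
where $\mu_0$ is a fixed dominant weight in $\Lambda$ with $W_{\mu_0}=W_\zeta$ (such a weight exists by the hypothesis $W_\la=W_\zeta$, after translating inside the coset $\Lambda$), and $Y_\zeta(\mu_0,\zeta)$ is viewed as a $\mf g_\oa$-module; here one uses that a Harish-Chandra $(\mf g,\mf g_\oa)$-bimodule can be tensored on the right over $U(\mf g_\oa)$ with a $\mf g_\oa$-module to produce a $\mf g$-module. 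In the opposite direction one sends $M\in \widetilde{\mc N}(\Lambda,\zeta)$ to the $(\mf g,\mf g_\oa)$-bimodule of $\mf n_\oa$-finite (equivalently, $(\mf n_\oa,\zeta)$-finite after the usual twist) vectors in $\Hom_{\mathbb C}(Y_\zeta(\mu_0,\zeta), M)$, i.e. the Harish-Chandra bimodule attached to the pair $(M, Y_\zeta(\mu_0,\zeta))$ by the standard adjunction. I would phrase both functors so that restricting to the underlying $\mf g_\oa$-modules recovers exactly the functors of Mili\v{c}i\'c--Soergel.

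Next I would verify that these functors land in the claimed categories and are mutually inverse. The key points to check are: (i) the tensor product $\mc B\otimes_{U(\mf g_\oa)}Y_\zeta(\mu_0,\zeta)$ is finitely generated, locally $\mf n$-finite, locally $Z(\mf g_\oa)$-finite, and lies in the block $\widetilde{\mc N}(\Lambda,\zeta)$ --- local $\mf n_\oa$-finiteness with generalized eigenvalue $\zeta$ comes from the corresponding property of $Y_\zeta(\mu_0,\zeta)$, local $\mf n_\ob$-finiteness is automatic because $\mf n_\ob$ acts nilpotently on everything in sight (being odd and finite-dimensional, together with the PBW filtration), and the central character bookkeeping is controlled by the annihilator condition defining $\mc B_\la$; (ii) the functor in the other direction produces an object of $\mc B_\la$, for which one must identify the relevant annihilators via Kostant's description of $\mathrm{Ann}\,Y_\zeta(\mu_0,\zeta)$ inside $U(\mf g_\oa)$ and the hypothesis $W_\la=W_\zeta$, guaranteeing that the central character of $M(\la)$ matches; (iii) the unit and counit of the adjunction are isomorphisms. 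For (iii), the cleanest route is to reduce to the $\mf g_\oa$-statement: since the super equivalence \cite[Theorem 3.1]{CC} is compatible (via restriction and the Kac/coinduction functors) with the classical one \cite[Theorem 5.9]{BG}, and since the Mili\v{c}i\'c--Soergel functors are already known to be inverse equivalences for $\mf g_\oa$, the adjunction morphisms for $\mf g$ restrict to isomorphisms of $\mf g_\oa$-modules and hence are themselves isomorphisms. Exactness and essential surjectivity then follow formally, or one invokes Theorem A to see that both categories have the same simple objects $\{\widetilde L(\mu,\zeta)\}$ and that the functors match them up, together with a Fitting/length argument using that both categories are finite-length (the latter from \cite{BCW} and the finiteness of $\mc B_\la$).

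The main obstacle I anticipate is bridging the $(\mf g,\mf g_\oa)$-bimodule formalism of \cite{CC} with the $(\mf g,\mf g)$- resp. purely-algebraic setup of \cite{MS}: one must make sure that ``right'' module structures, the correct side on which $\zeta$ acts, and the finiteness conditions defining the Harish-Chandra condition are all transported consistently, and that the translation-principle maneuver moving $\la$ to a convenient $\mu_0$ inside the coset $\Lambda$ is legitimate on the bimodule side (this is where $W_\la=W_\zeta$ is essential, exactly as $W_\la=W_{\mu}$ is used in \cite{MS}). A secondary technical point is checking that parabolic induction of Kostant modules for $\mf g$ (the functor producing $\widetilde M(\la,\zeta)$ of Theorem A) is intertwined by these functors with the induction of standard objects on the bimodule side, so that standard Whittaker modules correspond to the standard Harish-Chandra bimodules; once that compatibility is in place, everything else is bookkeeping parallel to \cite[Section 5]{MS}.
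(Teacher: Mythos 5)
Your high-level idea (replace Bernstein--Gelfand by the $(\mf g,\mf g_\oa)$-bimodule equivalence of \cite[Theorem 3.1]{CC} and mimic Mili{\v{c}}i{\'c}--Soergel) is indeed the paper's strategy, but your concrete construction has a genuine gap. First, the module you tensor with is wrong: $Y_\zeta(\mu_0,\zeta)$ is an $\mf l_\zeta$-module, not a $\mf g_\oa$-module, so $\mc B\otimes_{U(\mf g_\oa)}Y_\zeta(\mu_0,\zeta)$ does not make sense; the correct object is the parabolically induced $\mf g_\oa$-module, and in fact not the standard Whittaker module $M(\la,\zeta)$ itself but its thickenings $M^n(\la,\zeta)=U\otimes_{\mf q_\zeta}Y^n_\zeta(\la,\zeta)$ with $Y^n_\zeta(\la,\zeta)=U(\mf l_\zeta)/(\mathrm{Ker}\,\chi^{\mf l_\zeta}_\la)^nU(\mf l_\zeta)\otimes_{U(\mf n_\zeta)}\C_\zeta$. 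This is not a cosmetic point: $\mc B_\la$ consists of bimodules killed only by some \emph{power} $I_\la^n$, so tensoring with any single module whose annihilator is exactly $I_\la$ can at best match the bottom layer $\widetilde{\mc N}(\Lambda,\zeta)^1$; an equivalence with the whole block forces the system of equivalences $\mc B(I_\la^n)\simeq\widetilde{\mc N}(\Lambda,\zeta)^n$ and a passage to the inverse limit $X\mapsto\varprojlim X\otimes_U M^n(\la,\zeta)$, which relies on the kernel description of the maps $M^n(\la,\zeta)\onto M^m(\la,\zeta)$ from \cite[Lemma 5.14]{MS}. Your proposal never introduces this thickening/limit mechanism.

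Second, your argument that the two functors are mutually inverse ("the adjunction morphisms restrict to isomorphisms of $\mf g_\oa$-modules because the super equivalence is compatible with the classical one") presupposes precisely the compatibility that has to be proved. The paper avoids this by applying \cite[Theorem 3.1]{CC} directly to $M=M^n(\la,\zeta)$, which requires verifying its two hypotheses -- that Kostant's problem has a positive answer for $M^n(\la,\zeta)$ (surjectivity of $\iota_{M^n(\la,\zeta)}$, not merely the identification of its annihilator with $I_\la^n$) and that $M^n(\la,\zeta)$ is projective in $\langle\mc F\otimes M^n(\la,\zeta)\rangle$ -- both extracted from \cite[Section 5]{MS}, and then doing the remaining substantive work: identifying the coker category $\mathrm{Coker}(\widetilde{\mc F}\otimes\mathrm{Ind}\,M^n(\la,\zeta))$ with the block $\widetilde{\mc N}(\Lambda,\zeta)^n$ by an enough-projectives argument (projectives in $\widetilde{\mc N}(\Lambda,\zeta)^n$ are summands of modules induced from projectives of $\mc N(\Lambda,\zeta)^n$, and those are summands of $E\otimes M^n(\la,\zeta)$). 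Neither the Kostant-problem hypothesis nor the coker-category identification appears in your plan, and without them the appeal to \cite[Theorem 3.1]{CC} does not get off the ground.
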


We remark some consequences: if $\eta\in \mc I$  satisfies that  $W_\la =   W_\zeta =W_\eta$ then $\widetilde{\mc N}(\la +\Upsilon, \zeta)\cong \widetilde{\mc N}(\la +\Upsilon, \eta)$.  Also, if $s\in W$ is a simple reflection  such that $s$ does not lie in the integral Weyl group of $\la$, then we have $ \widetilde{\mc N}(\la+\Upsilon,\zeta) \cong \widetilde{\mc N}(s\cdot \la +\Upsilon,\eta),$ for any $\eta\in \mc I$ with $W_\eta =W_{s\cdot \la}.$

\subsection{}  We write $M(\la,\zeta)$ instead of $\widetilde{M}(\la, \zeta)$ in the case when $\g=\g_\oa$. As has been mentioned, if $\mf g$ is a classical Lie superalgebra of type I, then the Kac functor gives a bijection of simple Whittaker modules in $\mc N$ and $\widetilde{\mc N}$, leading to the following alternative definition of standard Whittaker module
\begin{align}
&\widetilde{M}(\la, \zeta):= K(M(\la, \zeta)),   
\end{align} for any $\la \in \h^\ast$ and $\zeta\in \mc I.$ This definition can be viewed as a specific case of that given in Theorem A (see Definition \eqref{eq33}) in the case when  $\mf g$ is one of $\gl(m|n),~\mf{osp}(2|2n)$ and $\pn.$ 

Recall that the category $\mc O$ for classical Lie superalgebras admit structures of  highest weight category (see, e.g., \cite{BS,CCC}) with Verma modules $\widetilde{M}(\la)$ indexed by $\la \in \h^\ast$ as standard objects. The following is our  third main result: 
\begin{thmC} \label{mainthm::3rd} 
	Suppose that $\mf g$ is a classical Lie superalgebra of type I.  Then for any $\la,\mu \in \h^\ast$ and $\zeta\in \mc I$, we have 
	\begin{align}
	&[\widetilde{M}(\la, \zeta): \widetilde{L}(\mu, \zeta)] = \sum_{\nu}[\widetilde{M}(\la): \widetilde{L}(\nu)],
	\end{align} where the summation runs over all $\mf n_\zeta$-antidominant weights $\nu$ such that $\mu\in W_\zeta\cdot \nu$. 
\end{thmC}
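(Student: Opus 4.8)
The plan is to transfer the multiplicity computation from the category $\widetilde{\mc N}(\zeta)$ of Whittaker modules over $\mf g$ down to the category $\mc N(\zeta)$ over $\mf g_\oa$ via the Kac functor $K(-)$, and there invoke the established Lie-algebra answer of Mili\v{c}i\'c--Soergel and Backelin. First I would record the exactness and compatibility properties of $K(-)=\Ind^\g_{\mf g_0\oplus \mf g_1}(-)$: since $\mf g$ is of type I, $K(-)$ is exact (it is tensoring with the finite-rank free module $U(\mf g_{-1})$), it sends $M(\la,\zeta)$ to $\widetilde{M}(\la,\zeta)$ by definition, and by the results quoted from \cite{CM} (Theorem A of the excerpt, together with Theorem \ref{mainthm1typeI}) it sends the simple module $L(\nu)$ to $\widetilde{L}(\nu)$ whenever $\nu$ is $\mf n_\zeta$-antidominant, with $K(-)$ inducing a bijection on simple objects of $\mc N(\zeta)$ and $\widetilde{\mc N}(\zeta)$. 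Consequently, applying $K(-)$ to a Jordan--H\"older filtration of $M(\la,\zeta)$ in $\mc N(\zeta)$ and using exactness gives a filtration of $\widetilde{M}(\la,\zeta)$, which yields immediately
\begin{align}
&[\widetilde{M}(\la,\zeta):\widetilde{L}(\mu,\zeta)] = [M(\la,\zeta):L(\mu,\zeta)],\notag
\end{align}
once one checks that $K(-)$ does not merge distinct composition factors (this follows from the injectivity of $K$ on simples).

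Next I would handle the right-hand side purely inside $\mc N(\zeta)$. Here the classical theory applies verbatim to $\mf g_\oa$: by \cite[Section 5]{MS} together with \cite[Theorem 6.2]{B}, the composition multiplicities of the standard Whittaker module $M(\la,\zeta)$ are governed by those of Verma modules in $\mc O$ via the Whittaker functor, giving an identity of the shape
\begin{align}
&[M(\la,\zeta):L(\mu,\zeta)] = \sum_{\nu}[M(\la):L(\nu)],\notag
\end{align}
where $\nu$ runs over the $\mf n_\zeta$-antidominant weights with $\mu\in W_\zeta\cdot\nu$; this is precisely the $\mf g_\oa$-version of the statement. The bookkeeping to be careful about is the parametrization: a simple object of $\mc N(\zeta)$ is $\widetilde L$ or $L$ of an $\mf n_\zeta$-antidominant weight, the standard object $M(\la,\zeta)$ is parabolically induced from a Kostant module for $\mf l_\zeta$, and the $W_\zeta$-orbit condition encodes exactly which antidominant weights contribute to a fixed simple top; I would spell this out so that the index set on the right matches the one in the statement.

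Finally I would assemble the two displays and, on the $\mf g$-side, apply $K(-)$ once more to pass from $[M(\la):L(\nu)]$ to $[\widetilde M(\la):\widetilde L(\nu)]$ in the BGG category $\mc O$ of $\mf g$ — again using that $K(-)$ is exact and bijective on simples when restricted to antidominant (equivalently, highest-weight) parameters, so that $K(\widetilde M(\la))=\widetilde M(\la)$ on Verma modules (where I mean the $\mf g_\oa$-Verma on the left and the $\mf g$-Verma on the right under the standard identification) and $K(L(\nu))=\widetilde L(\nu)$. Combining everything yields
\begin{align}
&[\widetilde{M}(\la,\zeta):\widetilde{L}(\mu,\zeta)] = \sum_{\nu}[\widetilde{M}(\la):\widetilde{L}(\nu)]\notag
\end{align}
with the summation over $\mf n_\zeta$-antidominant $\nu$ satisfying $\mu\in W_\zeta\cdot\nu$, as claimed. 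The main obstacle I anticipate is not the exactness formalism — which is routine for type I — but rather verifying coherently that the Kac functor intertwines the \emph{whole} package of structures (standard Whittaker modules, simple Whittaker modules, and the antidominance/$W_\zeta$-orbit parametrization) simultaneously on both the $\mc O$ side and the $\mc N(\zeta)$ side, i.e.\ that the diagram relating $K$, parabolic induction from Kostant modules, and the Whittaker functor actually commutes up to the identifications of simples; establishing this compatibility, rather than any single multiplicity computation, is where the real work lies.
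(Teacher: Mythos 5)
Your reduction hinges on the Kac functor preserving Jordan--H\"older multiplicities, i.e.\ on the two identities $[\widetilde M(\la,\zeta):\widetilde L(\mu,\zeta)]=[M(\la,\zeta):L(\mu,\zeta)]$ and $[M(\la):L(\nu)]=[\widetilde M(\la):\widetilde L(\nu)]$. Both are false in general. The functor $K(-)$ is exact and induces a bijection on isomorphism classes of simples, but only by sending a simple $V$ to the \emph{top} of $K(V)$; the module $K(V)$ itself is reducible unless a typicality-type condition holds (Lemma \ref{lem::CMCor68}). Applying $K$ to a composition series of $M(\la,\zeta)$ therefore gives a filtration of $\widetilde M(\la,\zeta)$ whose subquotients are Kac modules $K(L(\nu,\zeta))$, not simples, and you have no control over their further composition factors. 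Concretely, for $\g=\gl(1|2)$ with $\zeta$ regular and $\la$ atypical, $M(\la,\zeta)$ is simple by Kostant's theorem, yet $\widetilde M(\la,\zeta)=K(M(\la,\zeta))$ has length two (Lemma \ref{lem::18}, Proposition \ref{pro20}); this contradicts your first display, and the analogous failure of $K(L(\nu))$ to be simple invalidates your last step. That the final formula is nonetheless true is not a cancellation your argument supplies: the two false intermediate identities each miss precisely the atypical contributions, which is where the entire content of the theorem sits.

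The paper's proof avoids any comparison of super with even multiplicities. It constructs the Whittaker functor $\widetilde{\Gamma}_\zeta$ directly on the BGG category $\mc O$ of $\g$ (completion along weight spaces followed by taking vectors on which $x-\zeta(x)$, $x\in\mf n_\oa$, acts nilpotently), proves it is exact, and shows $\widetilde{\Gamma}_\zeta(\widetilde M(\la))=\widetilde M(\la,\zeta)$ while $\widetilde{\Gamma}_\zeta(\widetilde L(\nu))\cong\widetilde L(\nu,\zeta)$ for $\mf n_\zeta$-antidominant $\nu$ and $0$ otherwise (Theorem \ref{To3rdmainthm}). The Kac functor enters only through the intertwining isomorphism $\widetilde{\Gamma}_\zeta\circ K\cong K\circ\overline{\Gamma}_\zeta$, used to import Backelin's computations on Vermas and Kac modules, and the statement on simple images requires a genuinely new argument via Whittaker vectors in $\Coind_{\mf h+\mf n^-}^{\g}\C_\la$ (one-dimensionality of the space of Whittaker vectors). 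Theorem C is then immediate by applying the exact functor $\widetilde{\Gamma}_\zeta$ to a composition series of $\widetilde M(\la)$. To repair your outline you would need exactly such an exact functor from the super category $\mc O$ to $\widetilde{\mc N}(\zeta)$ taking super Vermas to super standard Whittaker modules and super simples to super simples or zero; the Kac functor alone cannot perform that bookkeeping.
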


As a consequence, the composition factors of standard Whittaker modules over the general linear Lie  superalgebras $\mf{gl}(m|n)$  and the ortho-symplectic Lie superalgebras $\mf{osp}(2|2n)$ can be computed by   recent works on the irreducible characters of the BGG category $\mc O$; see, e.g., \cite{Br1, CLW1, CLW2, CMW, Bo1, BaoWang}. 

%Also, if $\mf g$ is a basic classical Lie superalgebra of type I,   using Theorem C we recover the application of  \cite[Corollary 6.8]{CM} on  the criteria for simplicity of the induced module $\widetilde M(\la, \zeta)$. 

\subsection{}
The  paper is organized as follows. In Section \ref{Sect2}, we provide some background materials on
classical Lie superalgebras and Whittaker modules. %In particular, we identify simple objects of $\widetilde{\mc N}$ and that of the Whittaker categories considered in \cite{BCW}. 
In Section \ref{Sect3}, we obtain a classification of standard and simple Whittaker modules for classical Lie superalgebras in terms of their parabolic decompositions. The proof of Theorem A will be given in Section \ref{Sect31}. For classical Lie superalgebras of type I, an alternative definition of standard Whittaker modules will be introduced in Section \ref{sect32} that are to be used in the sequel. In this case, we will classify simple objects of $\widetilde{\mc N}$ in full generality. 

 %, leading to a classification of simple objects of the category $\mc W(\g,\mf n)$ from \cite{BCW} too.
 
In Section \ref{Sect4}, we review Harish-Chandra bimodules, cokernel categories and  Mili{\v{c}}i{\'c}-Soergel equivalence, including several essential ingredients for our main results. %We will establish in Section \ref{Sect43} the equivalence of a subcategory of Whittaker modules and a category of Harish-Chandra $(\mf g, \mf g_\oa)$-bimodules. In particular, the proof of Theorem B follows from Theorem \ref{thm::2}.
Applying tools in \cite{CC} and generalizing \cite{MS}, we will establish in Section \ref{Sect43} the equivalence stated in  Theorem B.

 In Section \ref{Sect5}, we focus on the multiplicity problem of standard Whittaker modules for classical Lie superalgebras of type I. Section \ref{Sect51} is devoted to the proof of Theorem C.  One can also find a detailed example of the general linear Lie superalgebra $\gl(1|2)$ in Section \ref{Sect522ex}. Several various criteria of simplicity for Whittaker modules over Lie superalgebras $\gl(m|n), \mf{osp}(2|2n)$ and $\pn$ are given in  Sections \ref{Sect51}, \ref{Sect52} and \ref{Sect53}. Also, we obtain composition factors of typical standard Whittaker modules  $\widetilde{M}(\la,\zeta)$ over $\pn$  in Section \ref{Sect53}.  For $\g=\gl(m|n), \mf{osp}(2|2n)$, we put together all the results from previous sections and reduce the problem of composition factors in certain standard Harish-Chandra bimodules to Kazhdan-Lusztig combinatorics in Section \ref{sect55}. 
\vskip 0.1cm  
{\bf Acknowledgment}. The author was supported by a MoST grant, and he would like to thank  Shun-Jen Cheng, Kevin Coulembier,  Volodymyr Mazorchuk and Weiqiang Wang for interesting discussions and helpful comments.

\section{Preliminaries} \label{Sect2}
\subsection{}
Let $\mf g$ be a finite-dimensional complex classical Lie superalgebra.  Denote by   $U(\mf g)$  the         universal enveloping algebra of  $\mf g$.  Let  $Z(\mf g)$ be the center of  $U(\mf g)$.  For a given $\la \in \h^\ast$, we set $\chi_{\la}^{\mf g}:  Z(\mf g)\rightarrow \mathbb C$ to be the central character associated with $\la$. We sometimes  write $\chi_\la^{\oa}$ instead of $\chi_\la^{\mf g_\oa}$. Recall that we fixed a triangular  decomposition  $\mf g =\mf n^- \oplus \mf h \oplus \mf n$  with even Cartan subalgebra $\mf h=\mf h_\oa$.

The Weyl group $W$ of $\mf g$ is by definition the Weyl group $W$ of $\mf g_\oa$ with  its defining action on $\mf h^\ast$. The usual dot-action of $W$ on  $\mf h^\ast$ is defined as $w\cdot \la :=w(\la+\rho_\oa)-\rho_\oa,$ for any  $ w\in W$ and $~\la \in \h^\ast$, 
where $\rho_\oa$ is the half of the sum of all positive roots of $\mf g_\oa$. A weight is called {\em integral, dominant or anti-dominant} if it is integral,
dominant or anti-dominant as a $\mf g_\oa$-weight, respectively. We recall that $W_\la$ denotes the stabilizer of $\la$ under the dot-action of $W$, for any weight $\la \in \h^\ast$.  For any $\zeta\in \mc I$, we  denote by $W_\zeta\subseteq W$ the Weyl group of $\mf l_\zeta$, which naturally acts on $\h^\ast$ via consistent dot-action. For any weight module $M$, we set $P(M)$ to be the set of all weights in $M$.  Finally, we denote by $\Upsilon\subset \h^\ast$ the set of integral weights. Then  we have $\mZ\Phi \subset\Upsilon$. 

 %A weight $\lambda$ is dominant if there exists no $w\in W$ such  that~$w\cdot\lambda-\lambda$ is a non-empty sum of elements in~$\Phi_{\oa}^+$.

 %We denote the $\rho_{\oa}$-shifted action of the Weyl group $W=W(\fg_{\oa}:\fh_{\oa})$ on $\fh_{\oa}^\ast$ by~$w\cdot\lambda=w(\lambda+\rho_{\oa})-\rho_{\oa}$.

%We define $\Gamma=\mZ\Phi$. 

\subsection{} Let $\mf k$ be a Lie superalgebra. For a subalgebra $\mf s\subset\mf k$, we denote by $\Res^{\mf k}_{\mf s}$ the restriction functor from $\mf k$ to $\mf s$. We have exact induction and coinduction functors
$$\Ind^{\mf k}_{\mf s}(-)=U(\mf k)\otimes_{U(\mf s)}-\qquad\mbox{and}\qquad \Coind^{\mf k}_{\mf s}(-)=\Hom_{U(\mf s)}(U(\mf k),-).$$
They are left and right adjoint functors to $\Res^{\mf k}_{\mf s}$. 
If $\mf s$ contains $\mf k_{\oa}$, then \cite[Theorem~2.2]{BF} (see also \cite{Go}) implies that
$\Ind^{\mf k}_{\mf s}(-)\;\cong\; \Coind^{\mf k}_{\mf s}(\Lambda^{\text{top}}(\mf k/\mf s)\otimes -).$
%In other words, the induction and coinduction functors are isomorphic up to taking the tensor product with the one-dimensional $\fc$-module realised as the top symmetric power of the purely odd superspace $\fa/\fc$.

%We will use the undecorated notations $\Res$, $\Ind$, $\Coind$ to  refer to these functors acting when  $\mf k=\mf g$ and $\mf s = \mf g_\oa$.
We will use the undecorated notations \[\text{Ind, Coind}: \mf g_\oa\text{-Mod}\rightarrow \mf g\text{-Mod}, ~\Res: \mf g\text{-Mod} \rightarrow \mf g_\oa\text{-Mod},\] to  refer to these  functors  when  $\mf k=\mf g$ and $\mf s = \mf g_\oa$.

%We have exact induction, coinduction and restriction functors  \[\text{Ind, Coind}: \mf g_\oa\text{-Mod}\rightarrow \mf g\text{-Mod}, ~\Res: \mf g\text{-Mod} \rightarrow \mf g_\oa\text{-Mod}.\]

In particular, the functors $\mathrm{Ind}$ and $\mathrm{Coind}$ are isomorphic, up to the equivalence given by tensoring with the one-dimensional $\fg_\oa$-module $\Lambda^{\text{top}} \mf g_\ob$ on the top degree subspace of $\Lambda \g_{{\bar{1}}}$.

\begin{prop} \label{lem::simples}
	Suppose that $S$ is a simple $\g$-module. Then $\Res S$ is locally finite over $Z(\mf g_\oa)$. 
\end{prop}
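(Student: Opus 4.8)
The plan is to exploit the finiteness of $\mf g$ over $\mf g_\oa$ together with the fact that $U(\mf g)$ is a finitely generated module over $Z(\mf g)$, and then to relate $Z(\mf g)$ to $Z(\mf g_\oa)$. First I would recall that since $S$ is a simple $\mf g$-module, by a superalgebra analogue of Dixmier's version of Schur's Lemma (or by Quillen's lemma for $U(\mf g)$, which is Noetherian of finite Gelfand--Kirillov dimension), the center $Z(\mf g)$ acts on $S$ by a character $\chi^{\mf g}_S : Z(\mf g)\to \mathbb C$. In particular $S$ is annihilated by the kernel $\mf m := \ker \chi^{\mf g}_S$, a maximal ideal of $Z(\mf g)$.

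The key structural input is that $U(\mf g)$ is module-finite over $Z(\mf g_\oa)$ modulo a suitable ideal, or more precisely that $Z(\mf g_\oa)$, viewed inside $U(\mf g)$ via $Z(\mf g_\oa)\subset U(\mf g_\oa)\subset U(\mf g)$, has the property that $U(\mf g)$ is finitely generated as a module over $U(\mf g_\oa)$ (this is clear: a PBW basis of $U(\mf g)$ over $U(\mf g_\oa)$ is indexed by subsets of a basis of $\mf g_\ob$, hence finite), and $U(\mf g_\oa)$ is finitely generated over $Z(\mf g_\oa)$ when restricted to a fixed central character — but here one must be a bit careful, as $U(\mf g_\oa)$ is \emph{not} finite over $Z(\mf g_\oa)$ globally. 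The cleaner route: it is a standard fact (going back to Kostant, and used in this exact form for Lie superalgebras, cf.\ \cite{Mu12}) that $Z(\mf g)$ is finitely generated as an algebra and that $U(\mf g)$ is a finitely generated $Z(\mf g)$-module \emph{after} we also adjoin a polarization — rather, the correct and sufficient statement is: the Harish-Chandra homomorphism identifies $Z(\mf g)$ with a subalgebra of $Z(\mf g_\oa)$ of finite index (equivalently, $Z(\mf g_\oa)$ is a finitely generated module over $Z(\mf g)$). Granting this, the maximal ideal $\mf m\subset Z(\mf g)$ generates an ideal in $Z(\mf g_\oa)$ whose radical is an intersection of finitely many maximal ideals $\mf m_1,\dots,\mf m_k$ of $Z(\mf g_\oa)$, since $Z(\mf g_\oa)$ is a finite $Z(\mf g)/\mf m$-algebra and hence Artinian modulo $\mf m Z(\mf g_\oa)$.

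Putting this together: $\mf m$ annihilates $S$, so $\mf m Z(\mf g_\oa)$ annihilates $\Res S$, and therefore $(\mf m_1\cap\cdots\cap \mf m_k)^N$ annihilates $\Res S$ for some $N$. This exactly says that $Z(\mf g_\oa)$ acts on $\Res S$ through the finite-dimensional algebra $Z(\mf g_\oa)/(\mf m_1\cap\cdots\cap\mf m_k)^N$, so every element of $Z(\mf g_\oa)$ acts on every vector of $\Res S$ through a finite-dimensional space of iterates; i.e.\ $\Res S$ is locally finite over $Z(\mf g_\oa)$. I would then note that $\Res S$ decomposes as the (finite) direct sum of its generalized $\chi_{\mf g_\oa}$-eigenspaces for the characters $\chi^{\mf g_\oa}_i$ corresponding to $\mf m_i$, which is the form in which this will be used later (the decomposition $\widetilde{\mc N}=\bigoplus_\zeta \widetilde{\mc N}(\zeta)$ and the block decomposition in Theorem B).

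The main obstacle is the middle step: justifying that $Z(\mf g_\oa)$ is module-finite over (the image of) $Z(\mf g)$. One clean way around having to prove this from scratch is to argue instead with the \emph{annihilator ideal} directly: $S$ is a simple $\mf g$-module, so $\Ann_{U(\mf g)}(S)$ is a primitive, hence maximal-in-its-GK-dimension-sense, ideal; restricting, $\Ann_{U(\mf g_\oa)}(\Res S)$ contains $\Ann_{U(\mf g)}(S)\cap U(\mf g_\oa)$, and one shows $U(\mf g_\oa)/\big(\Ann_{U(\mf g)}(S)\cap U(\mf g_\oa)\big)$ is a finite module over $Z(\mf g_\oa)$ because $U(\mf g)/\Ann_{U(\mf g)}(S)$ is finite over $U(\mf g_\oa)/(\ldots)$ and has finite length as needed. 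Alternatively — and this is probably the slickest — invoke that $\Res S$ has finite length as a $\mf g_\oa$-module (because $\dim_{U(\mf g_\oa)}U(\mf g)<\infty$ in the PBW sense forces $\Res S$ to be a subquotient of finitely many copies of a simple $\mf g$-module restricted, and simple $\mf g$-modules restrict to finite-length $\mf g_\oa$-modules — this itself needs an argument, e.g.\ via $S\hookrightarrow \Coind^{\mf g}_{\mf g_\oa}\Res S$ being "small"), and each simple $\mf g_\oa$-constituent has a central character, so $Z(\mf g_\oa)$ acts locally finitely on the finite-length module $\Res S$. I would pursue this last line, reducing everything to: (i) Schur's lemma giving a $Z(\mf g)$-character on $S$, and (ii) the standard fact that restriction along $\mf g_\oa\subset\mf g$ sends simple modules to finite-length modules, after which local finiteness over $Z(\mf g_\oa)$ is automatic.
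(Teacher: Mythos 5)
Your first route collapses at exactly the point you flag as the ``main obstacle'': for classical Lie superalgebras it is \emph{not} true that $Z(\mf g_\oa)$ is a finite module over (the Harish--Chandra image of) $Z(\mf g)$. For $\gl(m|n)$ the image consists of supersymmetric polynomials, and the fibre over an atypical central character of $\mf g$ contains whole families of central characters of $\mf g_\oa$, so the quotient $Z(\mf g_\oa)/\mf m Z(\mf g_\oa)$ is not Artinian; worse, for the periplectic algebra $\mf{pe}(n)$ --- which is explicitly included in the class of algebras treated here --- the center $Z(\mf g)$ is trivial, so a $Z(\mf g)$-character carries no information at all. Your fallback route is the right kind of statement, but it rests entirely on the claim that $\Res S$ has finite length over $\mf g_\oa$, which you do not prove: your sketch invokes ``simple $\mf g$-modules restrict to finite-length $\mf g_\oa$-modules'', i.e.\ the very fact being established, and the embedding $S\hookrightarrow \Coind \Res S$ cannot help because at that stage nothing is known about $\Res S$. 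So as written the proposal has a genuine gap.

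The missing idea is a short adjunction argument that avoids both the center comparison and any finite-length claim. Since $U(\mf g)$ is a finitely generated $U(\mf g_\oa)$-module, $\Res S$ is a finitely generated $\mf g_\oa$-module and hence has a simple quotient $V$; then
$0\neq \Hom_{\mf g_\oa}(\Res S, V)=\Hom_{\mf g}(S,\Coind V)$, and since $\Coind V\cong \Ind\bigl(\Lambda^{\text{top}}\mf g_\ob^\ast\otimes V\bigr)$, the simple module $S$ embeds into $\Ind W$ with $W:=\Lambda^{\text{top}}\mf g_\ob^\ast\otimes V$ still simple over $\mf g_\oa$. Now $\Res \Ind W\cong \Lambda\,\mf g_\ob\otimes W$ is a finite-dimensional module tensored with a simple module, which is locally finite over $Z(\mf g_\oa)$ by the Bernstein--Gelfand/Kostant result cited in the paper as \cite[Section 2.6]{BG}; local finiteness passes to the submodule $\Res S$. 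Note that this only uses the (easy) local $Z(\mf g_\oa)$-finiteness of $E\otimes W$ for $E$ finite dimensional and $W$ simple, not the stronger finite-length statement your plan would require; if you insist on your route, you must first prove that finite-length statement and then deduce finite length of $\Res S$ via this same embedding, which is strictly more work than the proposition demands.
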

\begin{proof} We adapt the argument   in \cite[Lemma 4.2]{CM} to complete the proof. Since $U(\mf g)$ is a  finitely-generated $U(\mf g_\oa)$-module,  the  $\g_\oa$-module $\Res S$ is finitely-generated. Therefore  $\Res S$ has a simple quotient $V$. By adjunction we have 
	\[\text{Hom}_{\mf g}(S, \Coind V) = \text{Hom}_{\mf g_\oa}(\Res S, V)\neq 0,\] which implies that $S$ is a submodule of $\Ind W,$ where $W:=\Lambda^{\text{top}} \mf g_\ob^\ast\otimes V$ is a simple $\mf g_\oa$-module.  We note that the module $\Res \Ind W\cong U(\mf g_\ob) \otimes  W$ is locally finite over $  Z(\mf g_\oa)$ by \cite[Section 2.6]{BG}. This completes the proof.
\end{proof}

\begin{rem} \label{Rmk2}To compare simple objects of the category $\mc W(\g,\mf n)$ from \cite{BCW} to that of our category  $\widetilde{\mc N}$, we note that    any simple $\mf g$-module that is locally finite over $\mf n$ lies in $\widetilde{\mc N}$ by Lemma \ref{lem::simples}. Namely, we remark that $\mc W(\g, \mf n)$ and $\widetilde{\mc N}$ have the same collection of simple objects. %Namely, we remark that a simple $\mf g$-module $S$ is locally finite over $\mf n_\oa$ if and only if  $S\in \widetilde{\mc N}$. 
\end{rem}

%Let $\mc N$ denote the full subcategory of $\mf g_\oa\mod$ consisting of those finitely generated $\mf g_\oa$-modules that are locally finite over $\mf n_\oa$ and $\mc Z(\mf g_\oa)$. Let $\widetilde{\mc N}$ denote the full subcategory of $\mf g\mod$ consisting of those finitely generated $\mf g$-modules that are locally finite over $\mf n$ and $\mc Z(\mf g_\oa)$. 

\begin{lem} \label{lem::3}
The functors $\emph{Ind, Coind}$ and $\Res$ restrict to exact functors between $\mc N$ and $\widetilde{\mc N}$:
\[\emph{Ind, Coind}: \mc N\rightarrow \widetilde{\mc N}, ~\Res: \widetilde{\mc N} \rightarrow \mc N.\]
\end{lem}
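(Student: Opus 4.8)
The plan is to verify the three assertions (exactness, and that each functor maps the stated source category into the stated target category) one at a time, reducing everything to properties of $\mc N$ and $\widetilde{\mc N}$ that are already available.

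\textbf{Exactness.} Since $\mf g_\oa \subset \mf g$ contains all of $\mf g_\oa$, both $\Ind = U(\mf g)\otimes_{U(\mf g_\oa)}-$ and $\Coind = \Hom_{U(\mf g_\oa)}(U(\mf g),-)$ are exact on the full module categories: $U(\mf g)$ is free as a left or right $U(\mf g_\oa)$-module by the PBW theorem for Lie superalgebras (concretely $U(\mf g)\cong \Lambda(\mf g_\ob)\otimes U(\mf g_\oa)$ as a one-sided module), so tensoring is exact, and the Coind side follows either by the same freeness or by the isomorphism $\Ind^{\mf g}_{\mf g_\oa}(-)\cong \Coind^{\mf g}_{\mf g_\oa}(\Lambda^{\text{top}}(\mf g/\mf g_\oa)\otimes -)$ recalled just before Proposition~\ref{lem::simples}. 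The functor $\Res$ is exact trivially. Exactness is then inherited by the restrictions to the subcategories, provided these subcategories are closed under the relevant kernels and cokernels — which we get for free once we know the functors land in the subcategories and recall that $\mc N$ and $\widetilde{\mc N}$ are (by definition, being cut out by local-finiteness conditions over $\mf n$, resp. $\mf n_\oa$, and over the relevant centers, together with finite generation) closed under subquotients once finite generation is checked; finite generation of a subquotient is automatic since these are Noetherian situations.

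\textbf{$\Res$ sends $\widetilde{\mc N}$ to $\mc N$.} Let $M\in \widetilde{\mc N}$. By definition $M$ is finitely generated over $\mf g$, locally finite over $\mf n$, and locally finite over $Z(\mf g_\oa)$. Then $\Res M$ is finitely generated over $\mf g_\oa$ because $U(\mf g)$ is a finitely generated $U(\mf g_\oa)$-module (the argument already used in Proposition~\ref{lem::simples}); it is locally finite over $\mf n_\oa \subseteq \mf n$ since a finite-dimensional $\mf n$-submodule through a given vector is in particular finite-dimensional over $\mf n_\oa$; and it is locally finite over $Z(\mf g_\oa)$ by hypothesis. Hence $\Res M\in \mc N$.

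\textbf{$\Ind,\Coind$ send $\mc N$ to $\widetilde{\mc N}$.} Let $V\in \mc N$, so $V$ is a finitely generated $\mf g_\oa$-module, locally finite over $\mf n_\oa$ and over $Z(\mf g_\oa)$. For $\Ind V$: it is finitely generated over $\mf g$ because it is generated by $1\otimes V$ and $V$ is finitely generated over $\mf g_\oa$. Local finiteness over $Z(\mf g_\oa)$ of $\Res\Ind V\cong \Lambda(\mf g_\ob)\otimes V$ follows from the argument in the proof of Proposition~\ref{lem::simples}, citing \cite[Section~2.6]{BG}: $Z(\mf g_\oa)$ acts on the finite-dimensional-over-$\mf g_\oa$-center pieces and $\Lambda(\mf g_\ob)$ is finite dimensional, so generalized eigenspaces stay finite dimensional. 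Local finiteness over $\mf n$: given $v\in \Ind V$, write $v$ in terms of finitely many PBW monomials; the $\mf n$-submodule it generates is contained in $U(\mf n)\cdot(\text{a finite-dimensional space})$, and since $U(\mf n_\oa)$ acts locally finitely on $V$ while $\Lambda(\mf n_\ob)$ is finite dimensional, one checks that $U(\mf n)$ acts locally finitely. For $\Coind$, invoke $\Coind^{\mf g}_{\mf g_\oa}(-)\cong \Ind^{\mf g}_{\mf g_\oa}(\Lambda^{\text{top}}(\mf g_\ob)\otimes -)$ together with the fact that $\Lambda^{\text{top}}(\mf g_\ob)\otimes -$ preserves $\mc N$ (it is an exact autoequivalence of $\mf g_\oa$-Mod commuting with everything relevant), so $\Coind V\in\widetilde{\mc N}$ as well.

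\textbf{Main obstacle.} The one point requiring genuine care — as opposed to bookkeeping — is the local finiteness over $\mf n$ (not merely $\mf n_\oa$) of $\Ind V$: one must see that adjoining the finitely many odd root vectors in $\mf n_\ob$ cannot destroy local finiteness. This is where the finite-dimensionality of $\Lambda(\mf g_\ob)$ is essential, and the cleanest way to organize the argument is to observe that $\Ind V$ is, as an $\mf n$-module, a subquotient-free extension built from finitely many copies of $V$ with a nilpotent odd part, so that a single vector generates something contained in $U(\mf n_\oa)$ acting on a finite-dimensional space; I would spell this out via the PBW filtration on $\Lambda(\mf g_\ob)$ and induct on filtration degree. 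Everything else is a direct unwinding of definitions together with the two facts already used in the proof of Proposition~\ref{lem::simples}, namely that $U(\mf g)$ is a finite free $U(\mf g_\oa)$-module and that $\Res\Ind W$ is $Z(\mf g_\oa)$-locally finite for $W$ in $\mc N$.
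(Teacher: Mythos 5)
Your proposal is correct and takes essentially the same route as the paper's (terser) proof: $\Res$ lands in $\mc N$ because $U(\g)$ is finitely generated over $U(\g_\oa)$, and $\Ind V\in\widetilde{\mc N}$ because $\Res\Ind V\cong U(\g_\ob)\otimes V$ is locally finite over $Z(\g_\oa)$ by \cite[Section 2.6]{BG} and over $\mf n_\oa$, hence over all of $\mf n$ since $\Lambda(\mf n_\ob)$ is finite dimensional, with $\Coind$ handled by the one-dimensional twist relating it to $\Ind$. The only slip is immaterial: the twist is by the dual $(\Lambda^{\text{top}}\g_\ob)^\ast$ rather than $\Lambda^{\text{top}}\g_\ob$, and either one-dimensional $\g_\oa$-module preserves $\mc N$, so your argument is unaffected.
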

\begin{proof} 
	Let $M\in \widetilde{\mc N}$. Then $M$ is a finite sum of cyclic $U(\g)$-submodules. We note that $U(\mf g)$ is finitely-generated over $U(\mf g_\oa)$ since $U(\mf g_\ob)$ is finite-dimensional. This means that  $\Res M\in \mc N$. 
	
	Conversely, assume that $V\in \mc N$. Then  $\Res \Ind V \cong U(\g_\ob)\otimes V$ is locally finite over $Z(\mf g_\oa)$ (see, e.g., \cite[Section 2.6]{BG}). Also, $U(\g_\ob)\otimes V$ is locally finite over $\n_\oa$, and so $\Ind V \in \widetilde{\mc N}.$
\end{proof}

As a consequence of Lemma \ref{lem::3}, we have the following corollary.

\begin{cor} \label{prop::3}
	Every object in $\widetilde{\mc N}$ has finite length. 
\end{cor}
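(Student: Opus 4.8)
The plan is to deduce finite length in $\widetilde{\mc N}$ from the corresponding statement in $\mc N$ via the functor $\Res$, exactly as set up by Lemma \ref{lem::3}. First I would recall that for reductive $\mf g_\oa$, every object of $\mc N=\mc N(\mf g_\oa,\mf n_\oa)$ has finite length; this is \cite[Section~2]{MS} (objects of $\mc N$ are finitely generated, locally $Z(\mf g_\oa)$-finite, and locally $\mf n_\oa$-finite, and Miličić--Soergel show such modules have finite composition series). Then, given $M\in\widetilde{\mc N}$, Lemma \ref{lem::3} gives $\Res M\in\mc N$, so $\Res M$ has finite length as a $\mf g_\oa$-module, say $\ell(\Res M)=\ell<\infty$.

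The key step is to transfer a finiteness bound across $\Res$. Since $\Res$ is exact and faithful (it is the identity on underlying vector spaces), any strictly increasing chain of $\mf g$-submodules $0=M_0\subsetneq M_1\subsetneq\cdots\subsetneq M_k=M$ restricts to a strictly increasing chain of $\mf g_\oa$-submodules $0=\Res M_0\subsetneq\Res M_1\subsetneq\cdots\subsetneq\Res M_k=\Res M$ (strictness is preserved because $\Res$ is injective on objects). Hence $k\le\ell(\Res M)<\infty$, so the lengths of filtrations of $M$ by $\mf g$-submodules are uniformly bounded, which is exactly the statement that $M$ has finite length. One should note $\widetilde{\mc N}$ is an abelian category closed under subquotients — this is implicit in its definition (finitely generated follows from $U(\mf g)$ being Noetherian, hence submodules of finitely generated modules are finitely generated; local $\mf n$-finiteness and local $Z(\mf g_\oa)$-finiteness pass to submodules and quotients) — so that submodules and quotients arising in such a chain again lie in $\widetilde{\mc N}$ and the notion of ``length'' is well posed.

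I do not expect a serious obstacle here: the only points needing a line of care are (i) that $\widetilde{\mc N}$ is closed under taking subobjects so that finite length is even the right notion, and (ii) that $\Res$ being both exact and faithful forces $\ell_{\mf g}(M)\le\ell_{\mf g_\oa}(\Res M)$. Both are routine. The one genuinely external input is finite length in $\mc N$ itself, which I would simply cite from \cite{MS} rather than reprove. A clean way to phrase the argument in the paper is: \emph{since $\Res$ is exact and faithful, $\ell(M)\le\ell(\Res M)$ for all $M\in\widetilde{\mc N}$, and $\ell(\Res M)<\infty$ because $\Res M\in\mc N$ by Lemma \ref{lem::3} and objects of $\mc N$ have finite length.} If one prefers to avoid quoting finite length in $\mc N$ as a black box, an alternative is to run a Harish-Chandra/Kostant-type argument directly: decompose $M$ along generalized central characters of $Z(\mf g_\oa)$ into finitely many summands (local finiteness plus finite generation), and on each summand use that $M$ is generated over $\mf n^-$-type weight considerations by finitely many Whittaker-type vectors — but this is more work and the citation route is cleaner.
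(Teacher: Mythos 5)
Your argument is correct and is essentially the paper's own proof: the paper also cites Mili\v{c}i\'{c}--Soergel (Theorem 2.6 of \cite{MS}) for finite length in $\mc N$ and then deduces the claim via $\Res M\in\mc N$ from Lemma \ref{lem::3}, the transfer step being exactly your observation that $\Res$ is exact and faithful so strict chains of $\mf g$-submodules restrict to strict chains of $\mf g_\oa$-submodules. Your extra care about $\widetilde{\mc N}$ being closed under subquotients is fine but not needed beyond what the paper leaves implicit.
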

\begin{proof}
	By \cite[Theorem 2.6]{MS}, every object in $\mc N$ has finite length (see also \cite{Mc} and \cite[Corolloary 4]{CoM}). The conclusion follows.
\end{proof}

For any $\zeta \in \mc I,$ we define the following blocks (see also \cite[Section 1]{MS}): \begin{align}
&\mc N(\zeta):=\{N \in \mc N |~x-\zeta(x) \text{ acts locally nilpotently on }N, \text{for any } x\in \mf n_\oa\},\\
&\widetilde{\mc N}(\zeta):=\{M \in \tN |~x-\zeta(x) \text{ acts locally nilpotently on }M, \text{for any } x\in \mf n_\oa\}.
\end{align}
 
We may observe that $\Ind, \Coind$ and $\Res$ restrict to well-defined functors between  $\mc N(\zeta)$ and $\tN(\zeta)$.  

 % There have been some   general approaches to the problem of extension fullness of categories developed in \cite{CoM}. Using \cite[Proposition 1]{CoM} and Lemma \ref{lem::3} we have the following corollary.  
%\begin{cor}	The category $\widetilde{\mc N}$ is extension full in $\mf g$-{\em Mod} in the sense of \cite[Section 2.2]{CoM}. Namely, the natural inclusion functor $i: \widetilde{\mc N}\rightarrow \mf g\emph{-Mod}$ induces the following isomorphisms of extension groups \begin{align*}	&i^d_{K,M}: ~\Ext_{\widetilde{\mc N}}^d(K, M) \cong \Ext_{\mf g\emph{-Mod}}^d(K, M),	\end{align*} for every $K, M\in \widetilde{\mc N}$ and  $d\geq 0$. \end{cor} \begin{proof} By Lemma \ref{lem::3}, each $M \in \widetilde{\mc N}$ is a quotient of $\Ind \Res M$, where $\Res M\in \mc N$. We apply \cite[Proposition 1]{CoM} for $\mc A=\mf g$-Mod, $\mc B=\widetilde{\mc N}$ and $\mc B_0$ being the full subcategory of $\mc B$ consisting of all modules isomorphic to $\Ind V$, for some $V\in \mc N$. For any $V\in \mc N$, $M \in \widetilde{\mc N}$ and   $d\geq 0$, we have 	\begin{align*} &\Ext_{\widetilde{\mc N}}^d(\Ind V, M) 	\cong  \Ext_{\mc N}^d(V, \Res M)  \cong \Ext_{\mf g_\oa\text{-Mod}}^d(V, \Res M) \cong 	\Ext_{{\mf g\text{-Mod}}}^d(\Ind V, M) 	\end{align*} The conclusion follows from  \cite[Proposition 1]{CoM}.\end{proof}

\section{Simple and standard Whittaker modules} \label{Sect3}
In this section, we define the various generalizations of  standard Whittaker modules and study their fundamental  properties in our setup. We will classify the simple Whittaker modules in terms of parabolic decompositions for an arbitrary classical Lie superalgebra. For Lie superalgebras of type I, we provide a complete classification of simple Whittaker modules using the Kac functors. 

\subsection{Simple Whittaker modules: arbitrary classical  Lie superalgebras}   \label{Sect31} Let $\g$ be an arbitrary  finite-dimensional complex classical Lie superalgebra. 
For each $\zeta \in \mc I$,  %we denote by $\mf p_\zeta$ the corresponding parabolic subalgebra, namely, $\mf p_\zeta:=\mf l_\zeta \oplus \mf u_\zeta$. Also, 
we denote by $\mf l_\zeta = \mf n_\zeta^-\oplus \mf h\oplus \mf n_\zeta$ the corresponding triangular decomposition of $\mf l_\zeta$. For any $\la \in \mf h^\ast$, we recall  that Kostant's simple Whittaker modules are defined as follows %(see  \cite{Ko78})
\begin{align}
&Y_\zeta(\la, \zeta):=U(\mf l_\zeta)/\text{Ker}(\chi^{\mf l_\zeta}_\la) U(\mf l_\zeta) \otimes_{U(\mf n_\zeta)}\mathbb C_\zeta,
\end{align} where  $\text{Ker}(\chi^{\mf l_\zeta}_\la)$ is the kernel of the central character $\chi_\la^{\mf l_\zeta}$ of $\mf l_\zeta$ and $\C_\zeta$ is the one-dimensional $\mf n_\zeta$-module associated with $\zeta$. The isomorphism $Y_\zeta(\la, \zeta)\cong Y_\zeta(\mu,\zeta)$  holds if and only if $W_\zeta\cdot \la =W_\zeta\cdot \mu.$

Suppose that $\mf l_\zeta$ is a Levi subalgebra in a parabolic decomposition $\mf g=\mf u_\zeta^- \oplus \mf l_\zeta\oplus \mf u_\zeta$ of $\g$. The {\em standard Whittaker modules} over $\g_\oa$ and $\g$ are  respectively  defined as
\begin{align} 
&M(\la, \zeta):= U(\g_\oa)\otimes_{\mf p_\oa} Y_\zeta(\la, \zeta), \label{eq32}\\
&\widetilde{M}(\la, \zeta):= {U}(\g)\otimes_{\mf p} Y_\zeta(\la, \zeta),\label{eq33}
\end{align} where $\mf p:=\mf p_\zeta=\mf l_\zeta \oplus \mf u_\zeta$ is the corresponding parabolic subalgebra. The module $M(\la,\zeta)\in \mc N$ has been studied in \cite{Mc,MS} (see also \cite{B}).

The following result is proved in \cite[Proposition 2.1]{MS}.
\begin{lem}[Mili{\v{c}}i{\'c}-Soergel] \label{lem::MS21}
	For any $\la \in \mf h^\ast$ and $\zeta\in \mc I$, the standard Whittaker $\mf g_\oa$-module $M(\la, \zeta)$ has simple top $L(\la,\zeta)$. Let $\mu \in \h^\ast$, then $${L}(\la, \zeta)\cong  {L}(\mu, \zeta)\Leftrightarrow {M}(\la, \zeta)\cong {M}(\mu, \zeta)\Leftrightarrow W_\zeta \cdot \la =W_\zeta \cdot \mu.$$ Every simple module in $\mc N(\zeta)$ is  of the form $L(\la, \zeta),$ for $\la \in \h^\ast.$
\end{lem}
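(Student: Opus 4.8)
The plan is to recall that this result is essentially the classical statement of Mili\v{c}i\'c--Soergel \cite[Proposition~2.1]{MS}, so the task is to explain how it follows from Kostant's structure theory together with the parabolic induction from the Levi $\mf l_\zeta$. First I would reduce the ``simple top'' assertion: since $M(\la,\zeta)=U(\mf g_\oa)\otimes_{\mf p_\oa} Y_\zeta(\la,\zeta)$ is a parabolically induced module from the simple $\mf l_\zeta$-module $Y_\zeta(\la,\zeta)$, one checks that every proper submodule has trivial intersection with the ``top'' $\mf p_\oa$-isotypic piece (the image of $1\otimes Y_\zeta(\la,\zeta)$). This is the standard argument: a weight/$\mf n_\zeta$-eigenvector argument shows that $\mf u_\zeta$ acts by $0$ on the $\zeta$-Whittaker vectors living in the top, so any submodule containing such a vector is all of $M(\la,\zeta)$; hence the sum of proper submodules is proper, giving a unique maximal submodule and simple top $L(\la,\zeta)$.

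Next I would address the equivalences. The implication $W_\zeta\cdot\la = W_\zeta\cdot\mu \Rightarrow M(\la,\zeta)\cong M(\mu,\zeta)$ follows from the corresponding isomorphism $Y_\zeta(\la,\zeta)\cong Y_\zeta(\mu,\zeta)$ recalled just above (Kostant's classification of simple Whittaker modules over the Levi $\mf l_\zeta$, applied to $\mf l_\zeta$ in place of a semisimple algebra, using that $\mf l_\zeta=(\mf l_\zeta)_\oa$ and $Z(\mf l_\zeta)$-central characters), by applying the exact functor $U(\mf g_\oa)\otimes_{\mf p_\oa}-$. Trivially $M(\la,\zeta)\cong M(\mu,\zeta)\Rightarrow L(\la,\zeta)\cong L(\mu,\zeta)$. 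For the reverse implication $L(\la,\zeta)\cong L(\mu,\zeta)\Rightarrow W_\zeta\cdot\la = W_\zeta\cdot\mu$, I would recover the central character: $M(\la,\zeta)$, hence $L(\la,\zeta)$, has $Z(\mf g_\oa)$ acting by $\chi^{\mf g_\oa}_\la$ (the central character of $Y_\zeta(\la,\zeta)$ over $\mf l_\zeta$ propagates to $\mf g_\oa$ under Harish-Chandra restriction along $\mf p_\oa$), so $\chi^{\mf g_\oa}_\la=\chi^{\mf g_\oa}_\mu$, i.e.\ $\mu\in W\cdot\la$; then a finer analysis using the $\mf l_\zeta$-central character of the top layer (which is $\chi^{\mf l_\zeta}_\la$) pins the ambiguity down to $W_\zeta\cdot\la=W_\zeta\cdot\mu$. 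Concretely, the $\mf l_\zeta$-socle or the $\mf n_\zeta$-Whittaker-vector subspace of $L(\la,\zeta)$ is the simple $\mf l_\zeta$-module $Y_\zeta(\la,\zeta)$, and $Y_\zeta(\la,\zeta)\cong Y_\zeta(\mu,\zeta)$ iff $W_\zeta\cdot\la=W_\zeta\cdot\mu$.

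Finally, for the exhaustion statement --- every simple module in $\mc N(\zeta)$ is of the form $L(\la,\zeta)$ --- I would argue as follows. Take a simple $S\in\mc N(\zeta)$; by definition $\mf n_\oa$ acts via $\zeta$ up to locally nilpotent part, and $S$ is finitely generated and $Z(\mf g_\oa)$-locally finite, hence by Corollary \ref{prop::3} (or \cite[Theorem~2.6]{MS}) has finite length, so it is finite-dimensional over a suitable localization and admits a $\zeta$-eigenvector $v$ for $\mf n_\zeta$; restricting the $\mf l_\zeta$-action to the $\mf n_\zeta$-Whittaker vectors produces a nonzero $\mf l_\zeta$-submodule which, being in the category $\mc N(\zeta)$ for $\mf l_\zeta$ where $\zeta$ is \emph{non-degenerate} on $\mf n_\zeta$, is by Kostant's theorem a sum of copies of $Y_\zeta(\mu,\zeta)$ for suitable $\mu$. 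Fixing one such $Y_\zeta(\mu,\zeta)$ yields by adjunction a nonzero map $M(\mu,\zeta)=U(\mf g_\oa)\otimes_{\mf p_\oa} Y_\zeta(\mu,\zeta)\to S$, which is surjective by simplicity of $S$; since $M(\mu,\zeta)$ has simple top $L(\mu,\zeta)$, we get $S\cong L(\mu,\zeta)$.

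\textbf{Main obstacle.} The delicate point is the reverse implication in the chain of equivalences, namely extracting $W_\zeta\cdot\la=W_\zeta\cdot\mu$ (not merely $W\cdot\la=W\cdot\mu$) from $L(\la,\zeta)\cong L(\mu,\zeta)$; this requires identifying the $\mf l_\zeta$-module structure on the space of $\mf n_\zeta$-Whittaker vectors of $L(\la,\zeta)$ canonically as $Y_\zeta(\la,\zeta)$, and then invoking Kostant's classification over $\mf l_\zeta$. The companion subtlety is verifying that $\zeta$ restricted to $\mf n_\zeta$ is non-degenerate (regular) in Kostant's sense --- this is exactly the content of the definition of $\Phi_\zeta$ and $\mf l_\zeta$ --- so that Kostant's theorem applies verbatim to the reductive Lie algebra $\mf l_\zeta$. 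Everything else is a routine transfer of the semisimple arguments of \cite{Ko78,MS} to the Levi $\mf l_\zeta$ combined with exactness of parabolic induction.
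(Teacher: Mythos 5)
The paper does not actually prove this lemma: it is quoted verbatim from \cite[Proposition 2.1]{MS}, and the only place where an argument of this type is written out is the super analogue, Theorem \ref{mainthm1}, whose proof adapts Mili{\v{c}}i{\'c}--Soergel. Your outline follows exactly that intended route (simple top of the parabolically induced module, Kostant's classification over $\mf l_\zeta$, adjunction for exhaustion), so the strategy is the right one; however, two steps are not correct as you have written them, and both are repaired by the same device that \cite{MS} and Theorem \ref{mainthm1} actually use, namely the eigenspace decomposition with respect to the element $H$ defining the parabolic.

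First, in the simple-top step you argue that every proper submodule meets the top copy of $Y_\zeta(\la,\zeta)$ trivially and conclude directly that the sum of all proper submodules is proper. That implication fails for general subspaces; one needs that every submodule is graded by the $H$-eigenvalues (which holds because $H$ acts semisimply on $M(\la,\zeta)$ with eigenvalues in $\la(H)+\sum_{\alpha\in P(\mf u_\zeta^-)}\mathbb Z_{\geq 0}\alpha(H)$ and $\alpha(H)<0$ there), so that a proper submodule has zero component in the top eigenvalue $\la(H)$ and hence so does any sum of proper submodules. Second, and more seriously, your mechanism for recovering $Y_\zeta(\la,\zeta)$ canonically inside $L(\la,\zeta)$ (and inside an abstract simple $S\in\mc N(\zeta)$ in the exhaustion step) is flawed: the space of $\zeta$-Whittaker vectors is not an $\mf l_\zeta$-submodule at all (for $y\in\mf l_\zeta$ and a Whittaker vector $v$ one gets $x(yv)=\zeta(x)yv+[x,y]v$ with $[x,y]\notin\ker\zeta$ in general), and the $\mf l_\zeta$-socle of $L(\la,\zeta)$ typically contains copies of $Y_\zeta(\nu,\zeta)$ for many $\nu$, not just $\nu=\la$. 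The correct invariant is again the $H$-grading: the eigenspace of maximal real part is $\mf l_\zeta$-stable, annihilated by $\mf u_\zeta$ (there is nothing above it), and equals $Y_\zeta(\la,\zeta)$ for $L(\la,\zeta)$; this is what yields $Y_\zeta(\la,\zeta)\cong Y_\zeta(\mu,\zeta)$ from $L(\la,\zeta)\cong L(\mu,\zeta)$, and in the exhaustion step it is what produces an $\mf l_\zeta$-submodule on which $\mf u_\zeta$ acts by zero, so that the adjunction $\Hom_{\mf g_\oa}(U(\mf g_\oa)\otimes_{\mf p_\oa}Y_\zeta(\gamma,\zeta),S)=\Hom_{\mf p_\oa}(Y_\zeta(\gamma,\zeta),S)$ applies. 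With these repairs your argument coincides with the proof of \cite[Proposition 2.1]{MS} and with the paper's proof of Theorem \ref{mainthm1}.
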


%Let $\mf g$ be a classical Lie superalgebra. Recall the definition of parabolic decomposition from \ref{deflu}.

We are now in a position to prove our first main result. 
The conclusion of Theorem B is a consequence of the following theorem.
\begin{thm} \label{mainthm1}
	Let $\zeta\in  \mc I$. Suppose that $\mf l_\zeta$ is a Levi subalgebra in a parabolic decomposition $\mf g=  {\mf u}^-_\zeta \oplus \mf l_\zeta \oplus {\mf u_\zeta}$. Then we have 
	\begin{itemize}
		\item[(1)] $\widetilde{M}(\la,\zeta)$ has simple top, which is denoted by $\widetilde{L}(\la, \zeta)$, for each $\la \in \mf h^\ast$.
		\item[(2)] $\{\widetilde{L}(\la, \zeta)|~\la \in \h^\ast\}$ is the complete list of simple modules in $\widetilde{\mc N}(\zeta)$. 
		\item[(3)] For any $\la, \mu \in \h^\ast$, the  following are equivalent:
		\begin{itemize}
			\item[(a)] $\widetilde{M}(\la, \zeta)\cong \widetilde{M}(\mu, \zeta)$. 
			\item[(b)] $\widetilde{L}(\la, \zeta)\cong \widetilde{L}(\mu, \zeta)$.
			\item[(c)] $W_\zeta \cdot \la =W_\zeta\cdot \mu$.
		\end{itemize}  
	\end{itemize}
\end{thm}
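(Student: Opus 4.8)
\textbf{Proof plan for Theorem \ref{mainthm1}.}

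The plan is to deduce everything from the even case (Lemma \ref{lem::MS21}) via the induction/coinduction/restriction functors of Lemma \ref{lem::3}, using the parabolic transitivity $\widetilde{M}(\la,\zeta)=U(\g)\otimes_{U(\mf p)}Y_\zeta(\la,\zeta)$ together with $M(\la,\zeta)=U(\g_\oa)\otimes_{U(\mf p_\oa)}Y_\zeta(\la,\zeta)$. First I would observe that since $\mf l_\zeta=(\mf l_\zeta)_\oa\subseteq\g_\oa$, the parabolic $\mf p=\mf l_\zeta\oplus\mf u_\zeta$ satisfies $\mf p_\oa=\mf p\cap\g_\oa=\mf l_\zeta\oplus(\mf u_\zeta)_\oa$ and $\g_\oa+\mf p=\g$, so by the PBW theorem $\Res\widetilde{M}(\la,\zeta)\cong U(\g_\oa)\otimes_{U(\mf p_\oa)}\bigl(\Lambda((\mf u_\zeta)_\ob\oplus(\mf u^-_\zeta)_\ob)\otimes Y_\zeta(\la,\zeta)\bigr)$ as a $\g_\oa$-module, which has $M(\la,\zeta)$ as a quotient (the quotient where the positive odd part acts by zero on the bottom and one keeps only the $\Lambda^0$ summand of the $\mf u_\zeta^-$-exterior factor, or more cleanly: $\Ind$ is right exact and $Y_\zeta(\la,\zeta)\tto$ nothing — here I would instead note $\widetilde M(\la,\zeta)\cong U(\mf u^-_\zeta)\otimes Y_\zeta(\la,\zeta)$ and its $\mf n_\oa$-locally-finite, $Z(\g_\oa)$-locally-finite structure places it in $\widetilde{\mc N}(\zeta)$, and its $\mf u_\zeta$-coinvariants recover $M(\la,\zeta)$).

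For part (1), I would argue as in the proof of Theorem A / \cite{CM}: the top of $\widetilde{M}(\la,\zeta)$ is $\widetilde{M}(\la,\zeta)/\mf u_\zeta^-\widetilde{M}(\la,\zeta)\cong Y_\zeta(\la,\zeta)$-isotypic as an $\mf l_\zeta$-module datum, but more precisely any proper $\g$-submodule $N$ has $N\cap Y_\zeta(\la,\zeta)=0$ (where I identify $Y_\zeta(\la,\zeta)$ with the bottom $1\otimes Y_\zeta(\la,\zeta)$), because $Y_\zeta(\la,\zeta)$ is a simple $\mf l_\zeta$-module and any nonzero submodule, being $\mf u_\zeta$-locally finite and weight-bounded above in the appropriate sense, must meet the bottom; hence the sum of all proper submodules is proper and $\widetilde{M}(\la,\zeta)$ has a unique maximal submodule. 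The cleanest route: by Frobenius reciprocity $\Hom_\g(\widetilde{M}(\la,\zeta),X)\cong\Hom_{\mf p}(Y_\zeta(\la,\zeta),X)$ and simplicity of $Y_\zeta(\la,\zeta)$ over $\mf l_\zeta$ forces any such nonzero map to be injective on the bottom; combined with the fact (from Corollary \ref{prop::3}) that $\widetilde M(\la,\zeta)$ has finite length, a standard argument gives a simple top $\widetilde L(\la,\zeta)$.

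For part (2), given a simple $S\in\widetilde{\mc N}(\zeta)$, Lemma \ref{lem::3} gives $\Res S\in\mc N(\zeta)$, which has finite length, so it has a simple quotient $L(\mu,\zeta)$ by Lemma \ref{lem::MS21}; by adjunction $\Hom_\g(S,\Coind L(\mu,\zeta))\neq0$, and since $\Coind\cong\Ind(\Lambda^{\mathrm{top}}\g_\ob\otimes-)$ and $\Lambda^{\mathrm{top}}\g_\ob\otimes L(\mu,\zeta)$ is again of the form $L(\mu',\zeta)$ (a twist of a standard-top simple — here I should check $\Lambda^{\mathrm{top}}\g_\ob$ is a weight module killed by $\mf n_\oa-\zeta$, which holds as $\zeta$ vanishes on $\mf n_\ob$ and on $[\mf n_\oa,\mf n_\oa]$), $S$ embeds into $\Ind L(\mu',\zeta)$; now $\Ind M(\mu',\zeta)\tto\Ind L(\mu',\zeta)$ and $\Ind M(\mu',\zeta)=U(\g)\otimes_{U(\g_\oa)}U(\g_\oa)\otimes_{U(\mf p_\oa)}Y_\zeta=U(\g)\otimes_{U(\mf p_\oa)}Y_\zeta$ surjects onto $\widetilde M(\mu',\zeta)$ (induction in stages through $\mf p_\oa\subseteq\mf p$), so $S$ is a subquotient of $\Ind L(\mu',\zeta)$ whose simple constituents I can analyze; alternatively and more directly, $S$ is a quotient of its projective cover built from $\widetilde M$'s. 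Either way one concludes $S\cong\widetilde L(\mu',\zeta)$ for some weight. For part (3), (a)$\Rightarrow$(b) is immediate from (1) (isomorphic modules have isomorphic tops); (b)$\Rightarrow$(c): if $\widetilde L(\la,\zeta)\cong\widetilde L(\mu,\zeta)$ then restricting and taking any simple quotient forces, via Lemma \ref{lem::MS21}, $W_\zeta\cdot\la=W_\zeta\cdot\mu$ — I would extract $Y_\zeta(\la,\zeta)$ as the $\mf l_\zeta$-socle-type bottom layer, which is an isomorphism invariant; (c)$\Rightarrow$(a): if $W_\zeta\cdot\la=W_\zeta\cdot\mu$ then $Y_\zeta(\la,\zeta)\cong Y_\zeta(\mu,\zeta)$ by the displayed isomorphism criterion just above Lemma \ref{lem::MS21}, hence $\widetilde M(\la,\zeta)\cong\widetilde M(\mu,\zeta)$ by functoriality of $U(\g)\otimes_{U(\mf p)}-$.

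The main obstacle is part (2): proving there are no "extra" simples in $\widetilde{\mc N}(\zeta)$ beyond the $\widetilde L(\la,\zeta)$. The delicate point is controlling the simple constituents of $\Ind L(\mu',\zeta)$ — one must know that induction from $\mc N(\zeta)$ sends simples to modules all of whose composition factors are of the form $\widetilde L(\nu,\zeta)$, and that the embedding $S\hookrightarrow\Ind L(\mu',\zeta)$ together with the surjection structure pins $S$ down to a specific $\widetilde L(\mu',\zeta)$. I expect this to follow by the same device as in the proof of Theorem A (adapting \cite[Theorem A]{CM} and \cite{BCW}), using that $\widetilde M(\mu',\zeta)$ has simple top and that $\Ind$ of the (standard-filtered) projective generators of $\mc N(\zeta)$ are projective-ish objects of $\widetilde{\mc N}(\zeta)$ with $\widetilde M$-flags; the bookkeeping of which $\widetilde L$ appears, rather than any conceptual difficulty, is the real work.
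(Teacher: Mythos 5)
Your overall strategy (reduce to Lemma \ref{lem::MS21} via the adjunctions between $\Res$, $\Ind$, $\Coind$, and use functoriality for (c)$\Rightarrow$(a), tops for (a)$\Rightarrow$(b)) is the same as the paper's, but the proposal is missing the one device on which the paper's whole proof rests: the eigenspace decomposition with respect to the element $H\in\mf h_\oa$ defining the parabolic decomposition, for which $\alpha(H)=0$ on $\Phi_\zeta$ and $\alpha(H)<0$ on $P(\mf u^-_\zeta)$. For part (1) your ``cleanest route'' (Frobenius reciprocity plus finite length) does not produce a unique maximal submodule: a cyclic module generated by an $\mf l_\zeta$-simple, $\mf u_\zeta$-trivial bottom could a priori surject onto a direct sum of two non-isomorphic simples, and nothing in that argument excludes it. Your first route is also internally garbled: you assert both that proper submodules meet $Y_\zeta(\la,\zeta)$ trivially and that ``any nonzero submodule \ldots must meet the bottom''; the latter is false (the maximal submodule of a reducible $\widetilde{M}(\la,\zeta)$ misses the bottom), and the step ``hence the sum of all proper submodules is proper'' is exactly where the grading is needed: since every submodule is $H$-graded and $\widetilde{M}(\la,\zeta)_{\la(H)}=Y_\zeta(\la,\zeta)$ is the full top eigenspace, each proper submodule has zero component there, hence so does their sum. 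The same invariant fixes (3)(b)$\Rightarrow$(c): ``any simple quotient of $\Res\widetilde{L}(\la,\zeta)$'' is not well defined as an invariant (already for $\gl(1|2)$ the restriction has composition factors in different $W_\zeta$-orbits); the paper instead reads off $Y_\zeta(\la,\zeta)$ as the top $H$-eigenspace of $\widetilde{L}(\la,\zeta)$ and applies Lemma \ref{lem::MS21}.

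The more serious gap is part (2), which you explicitly defer to ``bookkeeping'' and propose to settle by adapting \cite[Theorem A]{CM}; that Kac-functor route exists only for type I (it is the separate Theorem \ref{mainthm1typeI}), whereas Theorem \ref{mainthm1} concerns arbitrary classical $\mf g$. The paper's argument has two concrete steps, both again using $H$ together with Kostant's \cite[Theorem 4.6]{Ko78}. First, by induction in stages $\Ind M(\mu,\zeta)\cong \Ind_{\mf p}^{\mf g}\Ind_{\mf p_\oa}^{\mf p}Y_\zeta(\mu,\zeta)$, and the $\mf p$-module $\Ind_{\mf p_\oa}^{\mf p}Y_\zeta(\mu,\zeta)\cong U(\mf p_\ob)\otimes Y_\zeta(\mu,\zeta)$ has, since $\nu(H)>0$ for $\nu\in P(U(\mf p_\ob))$, a finite filtration with subquotients $Y_\zeta(\mu+\nu,\zeta)$ annihilated by $\mf u_\zeta$; hence $\Ind M(\mu,\zeta)$ is filtered by standard Whittaker modules, so your simple $S$ (which receives a nonzero map from $\Ind M(\mu,\zeta)$ by adjunction) is a composition factor of some $\widetilde{M}(\la,\zeta)$. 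Second, any composition factor $S$ of $\widetilde{M}(\la,\zeta)$ is $H$-graded with a maximal eigenvalue $m$ satisfying $\mf u_\zeta S_m=0$; Kostant's theorem then yields a simple $\mf l_\zeta$-submodule $Y_\zeta(\gamma,\zeta)\subseteq S_m$, so $\Hom_{\mf g}(\widetilde{M}(\gamma,\zeta),S)\neq 0$ and $S\cong\widetilde{L}(\gamma,\zeta)$. Without these two steps (or a substitute of equal strength), the classification in (2) is not established by your proposal.
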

\begin{proof} We first claim that $\widetilde{M}(\la, \zeta)\in \widetilde{\mc N}$. To see this, we may observe that   $\Res \widetilde{M}(\la, \zeta)$ is an epimorphic image of the $\g_\oa$-module $U(\g_\ob)\otimes M(\la, \zeta)$ by the Poincar\'e-Birkhoff-Witt basis theorem. Therefore $\widetilde{M}(\la, \zeta)$ is locally finite over $Z(\g_\oa)$ (see, e.g., \cite[Section 2.6]{BG}) and over $\mf n$. We may conclude that $\widetilde{M}(\la, \zeta)\in \widetilde{\mc N}$ since it is generated by any non-zero vector of $Y_\zeta(\la,\zeta)\subset \Res \widetilde{M}(\la,\zeta)$.  
	
	Next,  we shall proceed with an argument similar to the proof of \cite[Proposition 2.1]{MS}. % Decompose $\mf h =\mf h_\zeta\oplus \mf h^\zeta$, where $\mf h_\zeta:=\mf h\cap [\mf l_\zeta,\mf l_\zeta]$ and  $\mf h^\zeta:=\bigcap_{\alpha \in \Phi_\zeta} \text{ker}\alpha$. 
  We may note that  $H\in   \bigcap_{\alpha \in \Phi_\zeta} \text{Ker}(\alpha)$ and so $H$ acts on $Y_\zeta(\la,\zeta)$ via $\la(H)$.  Therefore $\widetilde{M}(\la ,\zeta)$ decomposes into eigenspaces $\widetilde{M}(\la,\zeta)_k$ with $k \in \la(H) +\sum_{\alpha\in P(\mf u^-_\zeta)} \mathbb Z_{\geq 0}\alpha(H)$ according to the eigenvalues of the action $H$. Since $\alpha(H)<0$ for any $\alpha \in P(\mf u^-_\zeta),$ it follows that $\widetilde{M}(\la,\zeta)_{\la(H)} = Y_\zeta(\la, \zeta)$. Also, all the $\widetilde{M}(\la,\zeta)_k$ are $\mf l_\zeta$-submodules since $\alpha(H)=0$ for any $\alpha \in \Phi_\zeta$. 
  
  Let $N$ be a proper submodule of $\widetilde{M}(\la, \zeta)$, then $N$ decomposes $N =\bigoplus_k N_k$ with $\mf l_\zeta$-submodules $N_k\subseteq \widetilde{M}(\la,\zeta)_k$ according to the eigenvalues $k$ of $H$ acting on $N$. Since $Y_\zeta(\la, \zeta)$
is simple, we may conclude that $N_{\la(H)} =0$. Therefore  $\widetilde{M}(\la, \zeta)$ has a unique maximal submodule. This proves Part (1). We denote the simple top of $\widetilde{M}(\la, \zeta)$ by $\widetilde{L}(\la, \zeta).$
   %For any $\la \in \mf h^\ast$ we denote by $\la^f$ its restriction to $\mf h^f$.  Note that $H\in \mf h^f$ be definition, which means that $\widetilde{M}(\la ,f)$ decomposes under into eigenspaces $\widetilde{M}(\la,f)_k$ according eigenvalues of $\text{ad}H$ with $k \in \la^f(H) -\sum_{\alpha\in \Pi} \mathbb Z_{\geq 0}\alpha^f(H)$.

  Next we prove Part $(2)$. Recall that we put $\mf p:=\mf p_\zeta$. Let $S\in \widetilde{\mc N}(\zeta)$ be a simple module. Since $S$ has finite length, there exists $\mu \in \mf h^\ast$ such that $L(\mu, \zeta)\hookrightarrow \Res S$. Therefore we have  
  \begin{align}
  &\text{Hom}_{\mf g}(\Ind M(\mu ,\zeta), S) = \text{Hom}_{\mf g_\oa}(M(\mu, \zeta), \Res S) \neq 0.
  \end{align} 
  The $\mf l_\zeta$-module $\Res_{\mf l_\zeta}^{\mf p}\Ind^{\mf p}_{\mf p_\oa} Y_\zeta(\mu, \zeta)\cong U(\mf p_\ob)\otimes Y_\zeta(\la,\zeta)$ have composition factors of the form $Y_\zeta(\la+\nu,\zeta),$ for $\nu\in P(U(\mf p_\ob))$ (see, e.g.,  \cite[Theorem 4.6]{Ko78}). Note that $\nu(H)>0$, for any $\nu\in P(U(\mf p_\ob))$. Therefore the $\mf p$-module  $\Ind^{\mf p}_{\mf p_\oa} Y_\zeta(\mu, \zeta)$ has a filtration with simple subquotients $Y_{\zeta}(\la+\nu,\zeta)$'s, which are annihilated by $\mf u_\zeta$. We may observe that 
  \begin{align}
  &\Ind M(\mu ,\zeta) = \Ind_{\mf g_\oa}^\g\Ind_{\mf p_\oa}^{\mf g_\oa} Y_\zeta(\mu, \zeta)\cong \Ind_{\mf p}^\g\Ind^{\mf p}_{\mf p_\oa} Y_\zeta(\mu, \zeta),
  \end{align} which implies that $\Ind M(\mu, \zeta)$ admits a filtration of standard Whittaker modules. Consequently, $S$ is a composition factor of $\widetilde{M}(\la, \zeta)$ for some $\la \in \h^\ast$. It remains to show that every composition factor of $\widetilde{M}(\la, \zeta)$ is of the form $\widetilde{L}(\la', \zeta)$, for $\la'\in \h^\ast$.

   Let $S$ be a composition factor of $\widetilde{M}(\la,\zeta)$. Again, under the action of $H$ the $S$ decomposes  into eigenspaces $S_k$ with $k \in \la(H) +\sum_{\alpha\in P(\mf u^-_\zeta)}\mathbb Z_{\geq 0}\alpha(H)$. Since $\mf u_\zeta S_k \subset S_{k'}$ with $k<k'$, we may conclude that there exists an eigenvalue $m$ such that $S_m\neq 0$ and $\mf u_\zeta S_m=0$. Since $S_m$ is a $\mf l_\zeta$-submodule of $\widetilde{M}(\la, \zeta)_m$, it follows from \cite[Theorem 4.6]{Ko78} that $S_m$ has a simple submodule $Y_\zeta(\gamma,\zeta)$, for some $\gamma \in \mf h^\ast$. Consequently, we have 
   \begin{align}
  &\text{Hom}_{\mf g}(\widetilde{M}(\gamma, \zeta), S) = \text{Hom}_{\mf p_\zeta}(Y_\zeta(\gamma ,\zeta), \Res^{\mf g}_{\mf p_\zeta}S)  \neq 0.
  \end{align} Therefore $S\cong \widetilde{L}(\gamma, \zeta)$. This proves Part (2).
  
  We have known $(c)\Rightarrow (a)$ already. We now prove $(a)\Rightarrow (b)$. If $\widetilde{M}(\la, \zeta) \cong \widetilde{M}(\mu, \zeta)$ then $\widetilde{M}(\la, \zeta)_{\la(H)} =Y_\zeta(\la, \zeta)$ and  $\widetilde{M}(\mu, \zeta)_{\mu(H)} =Y_\zeta(\mu,\zeta)$ are isomorphic as $\mf l_\zeta$-modules, and so $W_\zeta\cdot \la =W_\zeta \cdot \mu$ by Lemma \ref{lem::MS21}. 
  
  Finally, we prove the direction $(b)\Rightarrow(c)$. Again, $\widetilde{L}(\la,\zeta)$ decomposes into eigenspaces $\widetilde{L}(\la,\zeta)_k$ with $k \in \la(H) +\sum_{\alpha\in P(\mf u^-_\zeta)} \mathbb Z_{\geq 0}\alpha(H)$ according to the eigenvalues of the action of $H$ on  $\widetilde{L}(\la,\zeta)$. Thus, we have $\widetilde{L}(\la, \zeta)_{\la(H)} =Y_\zeta(\la,\zeta)$ and  $\widetilde{L}(\mu, \zeta)_{\mu(H)} =Y_\zeta(\mu, \zeta)$, which implies  $W_\zeta\cdot \la =W_\zeta\cdot \mu$ by Lemma \ref{lem::MS21}. This completes the proof.
\end{proof}
\subsection{Simple Whittaker modules: Lie superalgebras of type I}\label{sect32}
In this subsection, we let $\g=\mf g_{-1}\oplus \mf g_0\oplus \mf g_1$  be a finite-dimensional complex classical Lie superalgebra of type I. We will redefine the standard Whittaker modules in this case, leading to  a complete classification of simple Whittaker $\g$-modules.  The advantage is that we do not need to assume that  $\mf l_\zeta$ is a Levi subalgebra in a parabolic decomposition of $\mf g$.

For a given $\mf g_\oo$-module $V$, we can extend $V$ trivially to a $\mf g_\oo \oplus \mf g_{1}$-module and define the {\em Kac module} of $V$ as $K(V) :=
\text{Ind}_{\mf g_{\geq 0}}^{\mathfrak{g}}(V).$ Then this defines an exact functor $K(\cdot): \mf g_\oo\text{-Mod}\rightarrow \mf g\text{-Mod}$, which we call {\em Kac functor} (see also \cite[Sections 2.4,  3]{CM}). We  observe that $K(V) \cong \Lambda(\mf g_{-1}) \otimes V$ as vector spaces. Throughout this subsection, for any $\la \in \h^\ast$ and $\zeta\in \mc I$ we define the {\em standard Whittaker module} for type I Lie superalgebra   as
\begin{align}
&\widetilde{M}(\la, \zeta):= K(M(\la, \zeta)), \label{eq::WhiKac} 
\end{align}  where $M(\la, \zeta)$ is the standard Whittaker module over $\mf g_\oa$ as  in \eqref{eq32}. We remark that the definition  \ref{eq::WhiKac} can be viewed as special cases of \eqref{eq33}  when  $\mf g$ is one of $\gl(m|n),~\mf{osp}(2|2n)$ and $\pn.$

\begin{lem}
	The Kac functor $K(-)$ defines an exact functor from $\mc N$ to $\widetilde{\mc N}$.
	\end{lem}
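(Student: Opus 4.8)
The claim is that the Kac functor $K(-) = \Ind_{\g_{\geq 0}}^{\g}(-)$ sends $\mc N$ into $\widetilde{\mc N}$. The plan is to verify directly the three defining properties of $\widetilde{\mc N}$ for $K(V)$ when $V \in \mc N$: finite generation over $U(\g)$, local finiteness over $\mf n$, and local finiteness over $Z(\g_\oa)$. Since the argument closely parallels the second half of the proof of Lemma~\ref{lem::3} (and the opening paragraph of the proof of Theorem~\ref{mainthm1}), I would organize it as a short reduction to facts already recorded there.

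First I would use the vector-space identification $K(V) \cong \Lambda(\g_{-1}) \otimes V$ noted just above the statement; equivalently, as a $\g_\oa$-module, $\Res K(V)$ is an epimorphic image of $U(\g_\ob) \otimes V$ by the PBW theorem (here $\g_\ob = \g_{-1}\oplus\g_1$, and $\Lambda(\g_{-1})$ is a subspace of $U(\g_\ob)$). Since $U(\g_\ob)$ is finite-dimensional, $\Res K(V)$ is finitely generated over $U(\g_\oa)$; in particular $K(V)$ is finitely generated over $U(\g)$. Next, because $V \in \mc N$ is locally finite over $\mf n_\oa$ and $U(\g_\ob)\otimes V$ is a finite sum of weight-shifted copies of $V$ as an $\mf n_\oa$-module, $U(\g_\ob)\otimes V$ — hence its quotient $\Res K(V)$, hence $K(V)$ — is locally finite over $\mf n$. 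Finally, $\Res \Ind^{\g}_{\g_\oa} V \cong U(\g_\ob)\otimes V$ is locally finite over $Z(\g_\oa)$ by \cite[Section 2.6]{BG}, and $\Res K(V)$ is a $\g_\oa$-quotient of this module, so it too is locally finite over $Z(\g_\oa)$. Combining, $K(V)\in\widetilde{\mc N}$. Exactness of $K(-)$ is already known (it was noted when the Kac functor was introduced, being $\Ind_{\g_{\geq0}}^\g(-)$ with $U(\g)$ free over $U(\g_{\geq 0})$), so no extra work is needed there.

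I do not expect a genuine obstacle here; the only point requiring a little care is the bookkeeping that $K(V)$ really is a $\g_\oa$-subquotient (in fact quotient) of $U(\g_\ob)\otimes V = \Res\Ind^\g_{\g_\oa}V$, so that the cited local-finiteness statements for $\Ind$ transfer to $K$. This is exactly the device used in the proof of Theorem~\ref{mainthm1}, Part~(1), and in Lemma~\ref{lem::3}, so I would simply invoke it. An alternative, essentially equivalent, route is to factor $K(-)$ through $\Ind^\g_{\g_\oa}$: since $\Ind^\g_{\g_\oa} = \Ind^\g_{\g_{\geq 0}}\circ\Ind^{\g_{\geq 0}}_{\g_\oa}$ and $\Ind^{\g_{\geq0}}_{\g_\oa}V$ surjects onto the trivially-extended module $V$ (whose induction is $K(V)$), the functor $K$ is a subfunctor-quotient of $\Ind^\g_{\g_\oa}$, and Lemma~\ref{lem::3} applies verbatim. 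Either way the proof is two or three lines.
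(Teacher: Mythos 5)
Your proposal is correct and follows essentially the same route as the paper: identify $\Res K(V)\cong \Lambda(\mf g_{-1})\otimes V$ (equivalently, realize $K(V)$ as a quotient of $\Ind^{\mf g}_{\mf g_{\bar 0}}V$), deduce local finiteness over $\mf n$ from tensoring a finite-dimensional space with an $\mf n_{\bar 0}$-locally finite module, and invoke \cite[Sections 2.3, 2.6]{BG} for finite generation and local finiteness over $Z(\mf g_{\bar 0})$. The only difference is cosmetic: the paper applies the \cite{BG} facts directly to $\Lambda(\mf g_{-1})\otimes V$ rather than passing through the surjection $U(\mf g_{\bar 1})\otimes V\twoheadrightarrow \Res K(V)$, which is the same device used in Lemma~\ref{lem::3} and Theorem~\ref{mainthm1}.
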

\begin{proof} Let $M\in \mc N$. We note that  $\Res K(M) \cong \Lambda(\mf g_{-1})\otimes M$ as $\mf g_\oa$-modules. Therefore $K(M)$ is locally finite over $\mf n$. It follows from \cite[Section 2.3, 2.6]{BG} that $K(M)$ is a finitely-generated $\mf g$-module and is locally finite over $Z(\g_\oa)$, proving the claim.
\end{proof}

The following theorem is an analog of Theorem \ref{mainthm1}, but we do not need to assume that $\mf l_\zeta$ is a Levi subalgebra of $\mf g$.
\begin{thm} \label{mainthm1typeI} 
	 Let $\zeta \in \mc I$. Then we have 
		\begin{itemize}
			\item[(1)] $\widetilde{M}(\la,\zeta)$ has simple top, which is denoted by $\widetilde{L}(\la, \zeta)$, for each $\la \in \mf h^\ast$.
			\item[(2)] $\{\widetilde{L}(\la, \zeta)|~\la \in \h^\ast\}$ is the complete list of simple modules in $\widetilde{\mc N}(\zeta)$. 
			\item[(3)] For any $\la,\mu \in \h^\ast$, the following are equivalent:
			\begin{itemize}
				\item[(a)] $\widetilde{M}(\la, \zeta)\cong \widetilde{M}(\mu, \zeta)$. 
				\item[(b)] $\widetilde{L}(\la, \zeta)\cong \widetilde{L}(\mu, \zeta)$.
				\item[(c)] $W_\zeta \cdot \la =W_\zeta\cdot \mu$.
			\end{itemize}  
		\end{itemize}
\end{thm}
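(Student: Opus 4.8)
The plan is to deduce Theorem \ref{mainthm1typeI} from the analogous statement over $\mf g_\oa$ (Lemma \ref{lem::MS21}) by exploiting the good properties of the Kac functor $K(-)$, exactly in the spirit of \cite[Theorem A]{CM}. First I would record the structural fact that, as a $\mf g_\oa$-module, $\widetilde{M}(\la,\zeta)=K(M(\la,\zeta))\cong \Lambda(\mf g_{-1})\otimes M(\la,\zeta)$, so it lies in $\widetilde{\mc N}(\zeta)$, and that the $\mf g_{\geq 0}$-module $M(\la,\zeta)$ sits inside $\widetilde M(\la,\zeta)$ as the top $\Z$-degree piece for the grading coming from $\mf g=\mf g_{-1}\oplus\mf g_0\oplus\mf g_1$. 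To get Part (1), I would use the adjunction $\Hom_{\mf g}(K(M(\la,\zeta)),S)=\Hom_{\mf g_{\geq 0}}(M(\la,\zeta),\Res^{\mf g}_{\mf g_{\geq 0}}S)$ together with the analysis of the $\Z$-grading: any proper submodule $N$ of $\widetilde M(\la,\zeta)$ is graded, and since $\mf g_{-1}$ strictly lowers degree, the top-degree component $N_{\mathrm{top}}$ is a $\mf g_0$-submodule of $M(\la,\zeta)$; if $N$ is proper then $N_{\mathrm{top}}\neq M(\la,\zeta)$, and because $M(\la,\zeta)$ is generated by its simple top over $\mf g_0$ (via Lemma \ref{lem::MS21}), the sum of all such $N_{\mathrm{top}}$ still misses the generating line, giving a unique maximal submodule. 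This yields the simple top $\widetilde L(\la,\zeta)$.

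For Part (2), the containment $\{\widetilde L(\la,\zeta)\}\subseteq\{\text{simples of }\widetilde{\mc N}(\zeta)\}$ is clear; for the reverse, let $S\in\widetilde{\mc N}(\zeta)$ be simple. Then $\Res S\in \mc N(\zeta)$ has finite length (Corollary \ref{prop::3}), so some $L(\mu,\zeta)\hookrightarrow\Res S$; I would instead pass to a simple quotient, using that $\Res S$ is finitely generated over $\mf g_\oa$, and argue as in Proposition \ref{lem::simples}/\cite[Lemma 4.2]{CM} that $S$ embeds in $K(W)$ for some simple $\mf g_\oa$-module $W\cong L(\gamma,\zeta)$ (after adjusting by $\Lambda^{\mathrm{top}}\mf g_\ob^\ast$, which is a one-dimensional $\mf g_0$-module and hence does not change the $W_\zeta$-orbit class of the highest weight up to a fixed shift). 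Since $K(L(\gamma,\zeta))$ is a quotient of $K(M(\gamma,\zeta))=\widetilde M(\gamma,\zeta)$ by exactness of $K$, every composition factor of $K(L(\gamma,\zeta))$ — in particular $S$ — is a composition factor of some $\widetilde M(\gamma',\zeta)$; and by the same top-degree argument as in the proof of Theorem \ref{mainthm1} every composition factor of $\widetilde M(\la,\zeta)$ has a nonzero top piece annihilated by $\mf g_1$ containing some $Y_\zeta(\gamma,\zeta)$, hence is of the form $\widetilde L(\gamma,\zeta)$. This identifies the simples.

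For Part (3), $(c)\Rightarrow(a)$ follows because $W_\zeta\cdot\la=W_\zeta\cdot\mu$ forces $M(\la,\zeta)\cong M(\mu,\zeta)$ by Lemma \ref{lem::MS21}, whence $K$ applied to an isomorphism gives $\widetilde M(\la,\zeta)\cong\widetilde M(\mu,\zeta)$. For $(a)\Rightarrow(b)$, an isomorphism of the modules restricts to an isomorphism of unique simple tops. For $(b)\Rightarrow(c)$, I would use that $\widetilde L(\la,\zeta)$ is $\Z$-graded with top-degree piece a quotient of $M(\la,\zeta)$ containing $Y_\zeta(\la,\zeta)$ as an $\mf l_\zeta$-submodule — more precisely, reading off the $H$-eigenspace of maximal eigenvalue as in the proof of Theorem \ref{mainthm1}, one recovers $Y_\zeta(\la,\zeta)$ (resp.\ $Y_\zeta(\mu,\zeta)$) intrinsically, so $Y_\zeta(\la,\zeta)\cong Y_\zeta(\mu,\zeta)$ and Lemma \ref{lem::MS21} gives $W_\zeta\cdot\la=W_\zeta\cdot\mu$. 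The main obstacle I anticipate is Part (2): without the hypothesis that $\mf l_\zeta$ be a Levi subalgebra of $\mf g$, one cannot run the parabolic-induction filtration argument of Theorem \ref{mainthm1}, so the surjectivity/exhaustiveness of the list must instead be extracted purely from the Kac functor together with the $\mf g_\oa$-classification — the delicate point being to show that every simple $S\in\widetilde{\mc N}(\zeta)$ genuinely arises as a composition factor of some $\widetilde M(\gamma,\zeta)$ rather than merely a subquotient of some $K(L)$, and to control the bookkeeping of the $\Lambda^{\mathrm{top}}\mf g_\ob^\ast$ twist so that it only shifts weights within their $W_\zeta$-dot-orbit class.
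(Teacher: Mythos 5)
Your overall strategy (reduce to Lemma \ref{lem::MS21} via the Kac functor) is the right one, but the load-bearing step of your argument is the assertion that every proper submodule (and every composition factor) of $\widetilde{M}(\la,\zeta)=K(M(\la,\zeta))\cong\Lambda(\mf g_{-1})\otimes M(\la,\zeta)$ is graded with respect to the $\mathbb Z$-grading of $\mf g$, and this is never justified. Whittaker modules are not weight modules, so gradedness of submodules is not automatic; it would follow if the grading were induced by an element of the centre of the Lie algebra $\mf g_\oa$ acting semisimply (such an element acts by a scalar on $M(\la,\zeta)$, being central and acting by a scalar on $Y_\zeta(\la,\zeta)$), and this is available for $\gl(m|n)$, $\mf{osp}(2|2n)$, $\pn$ and $\mf{sl}(m|n)$ with $m\neq n$. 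But Theorem \ref{mainthm1typeI} is stated for all classical Lie superalgebras of type I, including $\mf{psl}(n|n)$ and $[\mf p(n),\mf p(n)]$, whose even parts have no such central grading element; there your ``top-degree'' arguments for Parts (1), (2) and for (b)$\Rightarrow$(c) of Part (3) have no starting point. This is exactly the reason the paper does not imitate the $H$-eigenspace argument of Theorem \ref{mainthm1} here (no parabolic decomposition of $\mf g$ compatible with $\mf l_\zeta$ is assumed), and instead invokes the machinery of \cite{CM}: any simple quotient embeds into $\Ind^{\mf g}_{\mf g_\oa+\mf g_{-1}}V$ and a one-dimensional Hom-space computation gives Part (1); for Part (2) one uses that every simple $S$ is the socle of such a coinduced module and that its $\mf g_1$-invariants $S^{\mf g_1}$ form a simple $\mf g_\oa$-module (\cite[Theorem 4.1, Corollary 4.3, Lemma 3.2]{CM}); and Part (3) is read off from $\mf g_{\pm 1}$-invariants.

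Two further points. In your Part (2), the argument of Proposition \ref{lem::simples} produces an embedding $S\hookrightarrow\Ind^{\mf g}_{\mf g_\oa}W$, not $S\hookrightarrow K(W)$: the full induced module has underlying space $\Lambda(\mf g_\ob)\otimes W$, not $\Lambda(\mf g_{-1})\otimes W$, so to conclude that $S$ is a composition factor of some $\widetilde{M}(\gamma,\zeta)$ you still need an intermediate step (e.g.\ a filtration of $\Ind^{\mf g}_{\mf g_\oa}W$ by Kac modules $K(\Lambda^k\mf g_1\otimes W)$, together with stability of $\mc N(\zeta)$ under tensoring with finite-dimensional modules), or the route through \cite[Corollary 4.3]{CM} taken in the paper. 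Finally, in (b)$\Rightarrow$(c) the top graded piece of $\widetilde{L}(\la,\zeta)$ is $L(\la,\zeta)$ rather than $Y_\zeta(\la,\zeta)$ (the degree-zero part of the maximal submodule is the radical of $M(\la,\zeta)$, not zero); this is harmless once the grading issue is resolved, since Lemma \ref{lem::MS21} applies equally to $L(\la,\zeta)$, but as written the identification ``as in Theorem \ref{mainthm1}'' is not correct in this setting.
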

\begin{proof}
	 %Next we assume that $f$ is regular, which implies that we have the triangular decomposition $\mf g=\mf u^-_\ob\oplus \mf g_\oa \oplus \mf u_\ob$ for this case. Then $[\mf u_\ob,\mf u_\ob] = [\mf u^-_\ob,\mf u^-_\ob]=0$ since $\mf u_\ob$ is an ideal. This means that $\mf g$ is a classical Lie superalgebra of type I. We now set $\mf g_1:= \mf u$ and $\mf g_{-1}:=\mf u^-$. We observe that $\widetilde{M}(\la, f) = \Ind^\g_{\mf g_\oa +\mf g_1} M(\la, f)$ for this case. 
	 For any $M\in \g$-Mod, we define $M^{\g_{\pm 1}}$ to be the subspaces of $\g_{\pm 1}$-invariants, namely, $M^{\g_{\pm 1}}:=\{m\in M|~\g_{\pm 1}m=0\}.$ 
	We shall first adapt the argument in \cite[Lemma 4.4]{CM} to complete the proof of Part $(1)$. 
	Suppose that $\phi: \widetilde{M}(\la, \zeta) \rightarrow S$ is a simple quotient of $\widetilde{M}(\la,\zeta)$. By  \cite[Theorem 4.1 (ii)]{CM} there is a simple $\mf g_\oa$-module $V$ such that 
	\begin{align}
	&S \hookrightarrow  \Ind^\g_{\mf g_\oa+\mf g_{-1}} V. 
	\end{align}
  By definition the $\mf g_\oa$-module $M(\la, \zeta)$ can be regarded as a $\mf g_\oa$-submodule of $\Res\widetilde{M}(\la,\zeta)$. Note that $\phi(M(\la, \zeta))\subseteq \Lambda^{ \text{top}}\mf g_1 \otimes V$ since $\mf g_1 \cdot M(\la, \zeta)=0$ and the subspace of $\mf g_1$-invariants of $\Ind^\g_{\mf g_\oa+\mf g_{-1}} V$ is $\Lambda^{ \text{top}}\mf g_1 \otimes V.$ But $\Lambda^{ \text{top}}\mf g_1 \otimes V$ is a simple $\mf g_\oa$-module, we may conclude that $\Lambda^{ \text{top}}\mf g_1 \otimes V\cong L(\la, \zeta)$ by Lemma \ref{lem::MS21}. Therefore we obtain that $V\cong \Lambda^{ \text{top}}\mf g_1^\ast \otimes L(\la, \zeta)$. 
  
  Finally, using adjunction and Schur's lemma, it follows that 
  $$\dim \text{Hom}_{\g}(\widetilde{M}(\la, \zeta), \Ind^\g_{\mf g_0+\mf g_{-1}} V) = \dim \text{Hom}_{\g_{\geq 0}}(M(\la, \zeta), \Lambda^{\text{top}}\mf g_{1}\otimes V)=1,$$
  by Lemma \ref{lem::MS21}. This proves Part $(1)$.
  
  We are going to prove $(2)$. Let $S\in \widetilde{\mc N}(\zeta)$ be a simple module. We claim that the subspace $S^{\g_1}$ of  $\g_1$-invariant elements is a simple $\mf g_\oa$-module in $\mc N(\zeta)$. It suffices to show that $S^{\g_1}$ is simple. To see this, we note that $S$ is isomorphic to the socle of $\Ind_{\g_0+\g_{-1}}^\g V$, for some simple $\g_\oa$-module $V$ by \cite[Corollary 4.3]{CM}. By \cite[Lemma 3.2]{CM}, the simple $\g_\oa$-submodule  $(\Ind_{\g_0+\g_{-1}}^\g V)^{\g_{1}} =\Lambda^{\text{top}}\g_1\otimes V$  generates the simple socle $S \cong \text{soc}(\Ind_{\g_0+\g_{-1}}^\g V)$ of $\Ind_{\g_0+\g_{-1}}^\g V$ as a $\mf g$-submodule. It follows that $S^{\g_1}$ is simple.   By Lemma \ref{lem::MS21}, there exists $\la'\in \mf h^\ast$ such that $S^{\g_1}\cong L(\la',\zeta)$. It follows from \cite[Theorem 4.1]{CM} that $S$ is the quotient of %$ \Ind_{\mf g_\oa+\g_1}^{\g}(M(\la',\zeta))\cong 
  $ \widetilde{M}(\la', \zeta)$, that is, $S\cong \widetilde{L}(\la',\zeta)$ by definition. 
	
	 Finally, it remains to show Part $(3)$. For any $\la \in \h^\ast$, since $\widetilde{L}(\la, \zeta)$ is the simple top of $K(L(\la, \zeta))$, we know that  $\widetilde{L}(\la, \zeta)\cong\widetilde{L}(\mu, \zeta)$ if and only if ${L}(\la, \zeta)\cong {L}(\mu, \zeta)$ by  \cite[Theorem 4.1]{CM}.  Thus, Parts $(b),(c)$ are equivalent by Lemma \ref{lem::MS21}. It remains to show   the direction $(a)\Rightarrow (b)$.

	  Suppose $(a)$ holds. Then we have the following isomorphisms of $\mf g_\oa $-modules
	\[ \Lambda^\text{top}\mf g_{-1} \otimes {M}(\la, \zeta)= \widetilde{M}(\la, \zeta)^{\g_{-1}}\cong  \widetilde{M}(\mu, \zeta)^{\g_{-1}} =  \Lambda^\text{top}\mf g_{-1} \otimes  {M}(\mu, \zeta) .\] Therefore the conclusion of Part $(3)$ follows from Lemma \ref{lem::MS21}. 
\end{proof}

We should mention that the Part (1) in Theorem \ref{mainthm1typeI} generalizes the construction of simple modules in \cite[Section 4]{BCW}, where the case of some basic classical Lie superalgebras of type I  was   considered.

\section{Whittaker modules and Harish-Chandra bimodules} \label{Sect4}

In this section, we continue to assume that  $\mf g$ is a  classical Lie superalgebra with $\zeta\in \mc I$. %We assume that either $\mf g$ is of type I, or $\mf l_\zeta$ is a Levi subalgebra in a parabolic decomposition $\mf g=\mf u^-_\zeta \oplus \mf l_\zeta\oplus \mf u_\zeta$. %In the later case, we set $\mf p_\zeta:=\mf l_\zeta \oplus \mf u_\zeta$ to be the corresponding parabolic subalgebra. 
%We continue to denote by 
Recall that $\mf l_\zeta = \mf n_\zeta^-\oplus \mf h\oplus \mf n_\zeta$ denotes the corresponding triangular decomposition of $\mf l_\zeta$. For a given weight $\la\in \h^\ast$, we set  $\Lambda:=\la+\Upsilon$.

%with $\mf g =\mf n^-\oplus \mf h \oplus \mf n$ be a triangular decomposition of $\mf g$ such that $\mf h$ is even, i.e. $\mf h=\mf h_\oa$.
\subsection{Action of the ring $\hat S^W$} \label{subsect::32}
 \subsubsection{}
In this subsection, we recall the action of the ring $\hat S^W$ from  \cite[Section 4, Section 5]{MS}: let $\hat S$ denote the completion of the symmetry algebra over $\mf h$ over the maximal ideal generated by $\mf h$. Denote by $\hat S^W$ its invariants under the action of the Weyl group $W_\zeta$. We will give a   natural action of $\hat S^W$ on modules in $\widetilde{\mc N}$ using results in \cite[Sections 4, 5]{MS}.

  We note that modules in $\widetilde{\mc N}(\zeta)$ restrict to $\mf l_\zeta$-modules that are locally finite over $Z(\mf l_\zeta)$. Therefore for any    $M\in \widetilde{\mc N}(\zeta)$ there is a ring homomorphism  
\begin{align} 
&\theta_M: ~\hat S^W\rightarrow \text{End}_{\mf l_\zeta}M, \label{eq43}
\end{align}
as constructed in \cite[Section 4]{MS}. As has been proved in \cite[Theorem 4.1]{MS} (see also \cite[Section 5]{MS}), the action of elements of $\hat S^W$ via $\theta_M$  in \eqref{eq43} commute with the $\mf g_\oa$-action on $M$. As an analog for our setup, we remark the following corollary, but we will not use it. 
\begin{prop} We have  $\theta_M(\hat S^W)\subset {\emph End}_{\mf g}M$, for any $M\in \widetilde{\mc N}$. 
\end{prop}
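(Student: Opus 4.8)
The plan is to reduce the claim $\theta_M(\hat S^W)\subset \End_{\mf g} M$ to the already-established fact $\theta_M(\hat S^W)\subset \End_{\mf g_\oa} M$ of \cite[Theorem 4.1]{MS}. The point is that an endomorphism of $\Res M$ as a $\mf g_\oa$-module which additionally commutes with the action of a generating set of $\mf g_\ob$ (equivalently, of odd root vectors) will automatically commute with all of $U(\mf g)$, since $\mf g_\oa$ together with $\mf g_\ob$ generates $\mf g$ as a Lie superalgebra and $U(\mf g)$ is generated by $\mf g$.

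First I would recall precisely how $\theta_M$ is built in \cite[Section 4]{MS}: for $M\in\widetilde{\mc N}(\zeta)$, the restriction $\Res^{\mf g}_{\mf l_\zeta} M$ is locally finite over $Z(\mf l_\zeta)$, and $\hat S^W=\hat S^{W_\zeta}$ acts via the Harish-Chandra homomorphism $Z(\mf l_\zeta)\hookrightarrow \hat S^{W_\zeta}$ completed appropriately, so $\theta_M(z)$ for $z\in\hat S^W$ is, on each generalized $Z(\mf l_\zeta)$-eigenspace, a specific polynomial in the (nilpotent part of the) central action. In particular $\theta_M(z)$ is, by construction, a limit of elements of $Z(\mf l_\zeta)$ acting on $M$ — more precisely it lies in the closure of the image of $Z(\mf l_\zeta)$ in $\End_{\mathbb C}(M_{\chi})$ on each block $M_\chi$. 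Hence $\theta_M(z)$ commutes with the $\mf l_\zeta$-action trivially; the content of \cite[Theorem 4.1]{MS}, applied here to $\Res M\in\mc N(\zeta)$ (which is legitimate by Lemma \ref{lem::3} and the remarks following Corollary \ref{prop::3}), is that it in fact commutes with the full $\mf g_\oa$-action.

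Next I would upgrade from $\mf g_\oa$ to $\mf g$. Fix $z\in\hat S^W$ and set $\phi:=\theta_M(z)\in\End_{\mathbb C}(M)$; we know $\phi\in\End_{\mf g_\oa}M$. Since $\widetilde{\mc N}$ is closed under the relevant constructions and every object has finite length (Corollary \ref{prop::3}), it suffices to check $[\phi, X]=0$ for $X$ ranging over a homogeneous spanning set of $\mf g_\ob$, say the odd root vectors $X_\alpha\in\mf g^\alpha$ with $\alpha\in\Phi_{\ob}$. For each such $X_\alpha$, the commutator $[\phi,X_\alpha]:M\to M$ is a map of $\mathbb C$-vector spaces; I claim it is actually a morphism of $\mf g_\oa$-modules, after accounting for the weight shift by $\alpha$. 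Indeed, for $Y\in\mf g_\oa$ one computes, using $[\phi,Y]=0$ and the super-Jacobi identity $[\phi,[Y,X_\alpha]] = [[\phi,Y],X_\alpha]\pm[Y,[\phi,X_\alpha]]$ together with $[Y,X_\alpha]\in\mf g_\ob$, that $[\phi,-]$ is a derivation-like operation compatible with the adjoint action; the upshot is that $X_\alpha\mapsto[\phi,X_\alpha]$ intertwines the $\mf g_\oa$-action on $\mf g_\ob$ (adjoint, shifted by $\alpha$) with the induced action on $\Hom_{\mathbb C}(M,M)$. The cleanest way to package this: consider the $\mf g_\oa$-module map $\mu_M:\mf g_\ob\otimes M\to M$ given by the action, and note $\phi\circ\mu_M - \mu_M\circ(\id_{\mf g_\ob}\otimes\phi):\mf g_\ob\otimes M\to M$ is $\mf g_\oa$-linear because $\phi$ is; its image is a $\mf g_\oa$-submodule of $M$. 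Now restrict to highest-weight considerations or argue by the block decomposition $M=\bigoplus M_{\chi^{\mf l}_\nu}$: on the $Z(\mf l_\zeta)$-eigenblock, $\phi$ is a limit of elements of $Z(\mf l_\zeta)\subset Z(\mf g_\oa)$-acting — but these do not commute with $\mf g_\ob$. So this naive route stalls, and I would instead argue as follows.

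The clean argument: by \cite[Theorem 4.1]{MS} we may regard $\theta$ as a natural transformation; concretely the assignment $M\mapsto\theta_M(z)$ is natural in $M\in\mc N(\zeta)$ for $\mf g_\oa$-morphisms. By Lemma \ref{lem::3} the functors $\Ind,\Coind:\mc N\to\widetilde{\mc N}$ are exact, and one checks directly from the construction of $\theta$ (which depends only on the $Z(\mf l_\zeta)$-module structure) that $\theta_{\Ind V}(z)=\Ind(\theta_V(z))$ and $\theta_{\Coind V}(z)=\Coind(\theta_V(z))$ for $V\in\mc N(\zeta)$, since $\Res\Ind V\cong U(\mf g_\ob)\otimes V$ as $Z(\mf l_\zeta)$-modules compatibly. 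In particular, for $V\in\mc N(\zeta)$ the endomorphism $\Ind(\theta_V(z))$ of $\Ind V$ is by definition $\mf g$-linear, i.e. lies in $\End_{\mf g}(\Ind V)$, and equals $\theta_{\Ind V}(z)$. Now let $M\in\widetilde{\mc N}(\zeta)$ be arbitrary. By Theorem \ref{mainthm1typeI} (in type I) or more generally by the proof of Theorem \ref{mainthm1} — specifically the filtration argument showing $M$ is a subquotient of some $\Ind M(\mu,\zeta)$ — together with naturality of $\theta$ under $\mf g_\oa$-morphisms and the fact that $\mf g$-subquotients are in particular $\mf g_\oa$-subquotients, one transports $\mf g$-linearity of $\theta_{\Ind M(\mu,\zeta)}(z)$ to $\theta_M(z)$: if $0\to M'\to \widetilde M\to M\to 0$ or $M$ is a $\mf g$-submodule of such an $\widetilde M$ with $\theta_{\widetilde M}(z)$ $\mf g$-linear, then $\theta_{\widetilde M}(z)$ preserves the $\mf g$-submodule $M'$ (it commutes with $\mf g$) and induces on the $\mf g$-subquotient an endomorphism which, by naturality with respect to the $\mf g_\oa$-projection, is exactly $\theta_M(z)$; an $\mf g_\oa$-endomorphism arising as the restriction/quotient of a $\mf g$-endomorphism is itself $\mf g$-linear. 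Hence $\theta_M(z)\in\End_{\mf g}M$. The last step, dropping from $\widetilde{\mc N}(\zeta)$ to all of $\widetilde{\mc N}$, is immediate from the block decomposition $\widetilde{\mc N}=\bigoplus_{\zeta\in\mc I}\widetilde{\mc N}(\zeta)$ of \cite[Theorem 3.2]{BCW}. The main obstacle is pinning down that $\theta$ commutes with $\Ind$ (equivalently, that the $\hat S^W$-action is defined purely in terms of the $Z(\mf l_\zeta)$-module structure and is therefore functorial and compatible with induction), which requires looking carefully at the explicit construction in \cite[Sections 4--5]{MS}; everything else is formal diagram-chasing with the finite-length property from Corollary \ref{prop::3}.
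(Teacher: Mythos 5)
Your overall plan is workable, but the step on which everything hinges is exactly the one you do not actually establish. You need $\theta_{\Ind V}(z)=\Ind(\theta_V(z))$ for $V\in\mc N(\zeta)$, and your justification --- that $\theta$ ``depends only on the $Z(\mf l_\zeta)$-module structure'' and $\Res\Ind V\cong U(\g_\ob)\otimes V$ ``compatibly'' --- is not sufficient as stated: the $Z(\mf l_\zeta)$-action on $\Lambda\g_\ob\otimes V$ goes through the coproduct, not through $\mathrm{id}\otimes(\text{action on }V)$, so the identification of $\theta_{\Lambda\g_\ob\otimes V}(z)$ with $\mathrm{id}_{\Lambda\g_\ob}\otimes\theta_V(z)$ is precisely the nontrivial tensor compatibility of \cite[Theorem 4.1]{MS} (the statement $\theta_{E\otimes M}(s)=\mathrm{id}_E\otimes\theta_M(s)$ for $E$ finite dimensional and semisimple over $\mf l_\zeta$), not a formal consequence of functoriality of the construction. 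So the ``main obstacle'' you flag is a genuine gap in your write-up, and it is filled by citing exactly that theorem. Once you grant it, your transport step is fine and can even be simplified: for any $M\in\widetilde{\mc N}(\zeta)$ the counit gives a surjection $\Ind\Res M\onto M$ with $\Res M\in\mc N(\zeta)$, so no filtration by standard Whittaker modules or appeal to Theorem \ref{mainthm1typeI} is needed; naturality of $\theta(z)$ with respect to this ($\mf l_\zeta$-linear) projection, plus $\mf g$-linearity of $\theta_{\Ind\Res M}(z)=\Ind(\theta_{\Res M}(z))$, gives $\theta_M(z)\in\End_{\mf g}M$, and the block decomposition handles general $M\in\widetilde{\mc N}$.

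It is worth noting that the ``naive route'' you abandoned is the paper's proof, and the same missing ingredient rescues it in one line: the action map $\pi_M:\ad\g\otimes M\to M$ is a morphism of $\mf l_\zeta$-modules (with $\ad\g$ semisimple over $\mf l_\zeta$), $\theta(s)$ is an endomorphism of the identity functor on the category of $Z(\mf l_\zeta)$-locally finite $\mf l_\zeta$-modules, and \cite[Theorem 4.1]{MS} gives $\theta_{\ad\g\otimes M}(s)=\mathrm{id}_{\ad\g}\otimes\theta_M(s)$; naturality applied to $\pi_M$ then yields $\theta_M(s)\circ\pi_M=\pi_M\circ(\mathrm{id}\otimes\theta_M(s))$, i.e.\ $\theta_M(s)(Xm)=X\theta_M(s)(m)$ for all $X\in\g$. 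Your objection that $\phi$ is a limit of elements of $Z(\mf l_\zeta)$ which ``do not commute with $\g_\ob$'' misses that one should not commute $\phi$ past $\g_\ob$ elementwise but instead apply the tensor identity to the module $\g_\ob\otimes M$ (or $\ad\g\otimes M$) and then use naturality against the action map. In short: your induction-based detour is correct in outline but needs the same citation of \cite[Theorem 4.1]{MS} at its key step, at which point the direct argument is both shorter and is what the paper does.
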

\begin{proof}
	Let $\text{Id}^{\zeta}$ denote the identity functor on the category of $\mf l_\zeta$-modules that are locally finite over $Z(\mf l_\zeta)$. By the construction   in \cite[Section 4]{MS}, for each $s\in \hat S^W$ the element $\theta_M(s)$ is the evaluation of an endomorphism $\theta(s): \text{Id}^{\zeta}\rightarrow \text{Id}^{\zeta}$ at $M$, where $\theta$ is a ring homomorphism from $\hat S^W$ to the endomorphism ring of functor $\text{Id}^\zeta$. 	We note that the adjoint representation $\ad \mf g$  is semisimple over $\mf l_\zeta$. Let $\pi_M: \ad \g \otimes M\rightarrow M$ be the canonical epimorphism. 
 By \cite[Theorem 4.1]{MS} we have   %$$\text{Id}^{\mf l_f}_{\ad\g\otimes M} = \text{Id}^{\mf l_f}_{\ad(\mf g)}\otimes \theta_M(s),$$ 
$$\theta_{\ad\mf g\otimes M}(s)= \text{Id}^{\zeta}_{\ad\g}\otimes \theta_M(s),$$ which gives rise to the identity
$\pi_M\circ(\text{Id}^{\zeta}_{\ad\g}\otimes \theta_M(s)) = \theta_{M}(s)\circ \pi_M$, for any $s\in \hat S^{W}.$  This completes the proof.  
\end{proof}

\subsubsection{} \label{sect412}
We   recall some results and notations in \cite[Section 5]{MS} as follows.  For any positive integer $n$, we define $\mc N(\zeta)^n:=\{M\in \mc N(\zeta)|~\theta(\mf m)^nM=0\},$ where $\mf m$ is the maximal ideal of $\hat S^{W}$. Set $\mf l:=\mf l_\zeta$. Any module $M\in \mc N(\zeta)$ decomposes into generalized eigenspaces $M=\bigoplus_{\la\in \h^\ast}M_{\chi_\la^{\mf l}}$ according to the action of elements of $ Z(\mf l)$.%, where $\chi_\la^{\mf l_f}$ runs over all central character of $\mf l_f$. 

  Recall that we denote the set of integral weights by $\Upsilon$. For any coset $\Lambda= \la +\Upsilon$ in $ \mf h^\ast/\Upsilon$ we put 
\begin{align*}
&\mc N(\Lambda,\zeta) := \{M\in \mc N(\zeta)|~M_{\chi^{\mf l}} = 0 \text{ unless }\chi^{\mf l} = \chi^{\mf l}_\mu, \text{ for some }\mu \in \Lambda\},\\
&\mc N(\Lambda,\zeta)^n := \mc N(\Lambda,\zeta)\cap \mc N(\zeta)^n,
\end{align*}  as defined in \cite[Section 5]{MS}. 
Then  ${\mc N}(\zeta)$ decomposes into ${\mc N}(\zeta)=\bigoplus_{\Lambda \in \h^\ast /\Upsilon}{\mc N}(\Lambda, \zeta)$. 

Similarly,  we define $\widetilde{\mc N}(\Lambda, \zeta):=\{M\in \widetilde{\mc N}(\zeta)|~\Res M \in\mc N(\Lambda,\zeta) \}$. Since $\mc N(\Lambda,\zeta)$ is stable under tensoring with finite-dimensional $\mf g_\oa$-modules (see \cite[Theorem 4.1, Lemma 4.3]{MS}) and every module in $\mc N(\Lambda,\zeta)$ has finite length, we know that the family of objects in $\widetilde{\mc N}(\Lambda, \zeta)$ consists of composition factors of $\Ind X$, for $X\in {\mc N}(\Lambda, \zeta)$. Therefore  $\Ind$ and $\Res$  restrict to well-defined  functors between $\widetilde{\mc N}(\Lambda, \zeta)$ and ${\mc N}(\Lambda, \zeta)$.  Also, we have  the  decomposition  $\widetilde{\mc N}(\zeta)=\bigoplus_{\Lambda \in \h^\ast /\Upsilon}\widetilde{\mc N}(\Lambda, \zeta) $ (see also \cite[Lemma 4.3]{MS}). 

We define $$\widetilde{\mc N}(\zeta)^n:=\{M\in  \widetilde{\mc N}(\zeta)|~\theta_M(\mf m)^nM=0\}=\{M\in  \widetilde{\mc N}(\zeta)| \Res M \in \mc N(\zeta)^n\},$$  $$\widetilde{\mc N}(\Lambda, \zeta)^n = \{M \in \widetilde{\mc N}(\zeta)^n|~\Res M\in \mc N(\Lambda, \zeta)^n \}.$$ We claim that   $\Ind$ and $\Res$  restrict to well-defined functors between $\widetilde{\mc N}(\zeta)^n$ and ${\mc N}(\zeta)^n$. To see this, let $V\in \mc N(\zeta)^n$ and $s\in \mf m^n$. Consider the canonical epimorphism $U(\g_\ob)\otimes V \onto V$.  By \cite[Theorme 4.3]{MS} we have 
$\theta_{U(\g_\ob)\otimes V}(s)= \text{Id}^{\zeta}_{U(\g_\ob)}\otimes \theta_V(s)=0,$ which implies that $\theta_{\Res\Ind V}(s)=0$, as desired. Therefore  $\Ind$ and $\Res$  restrict to well-defined functors between $\widetilde{\mc N}(\Lambda, \zeta)^n$ and ${\mc N}(\Lambda, \zeta)^n$, for any $n\geq 0$.

\subsection{Harish-Chandra bimodules}
We recall some conventions of Harish-Chandra bimodules; see, e.g., \cite[Section 3]{CC}, or \cite[Section 3]{MS} for more details. In the rest of the present paper, we set $\widetilde{U}:=U(\mf g)$ and $U:=U(\g_\oa).$

\subsubsection{}
We denote by~$\cF$ the category of finite-dimensional semisimple $\fg_{\oa}$-modules. Set  $\widetilde{\mc F}$ to be the category of  finite-dimensional  $\fg$-modules which restrict to objects in~$\cF$.  We denote the full subcategory of projective modules in~$\widetilde{\cF}$ by~$\widetilde{\mc P}$. Modules in $\widetilde{\mc P}$ are precisely the direct summands of modules $\Ind V$, for arbitrary~$V\in\cF$. For a $\g$-module $M$, we denote by $\widetilde{\cF}\otimes M$  the category of 
$\g$-modules of the form $V\otimes M$, with $V\in\widetilde{\cF}$. Similarly, we define $\widetilde{\cP}\otimes M$, ${\cF}\otimes N$ and ${\mc P}\otimes N$, for $N\in \g_\oa$-Mod. 

For a given full subcategory $\mc C$ of either $\g$-Mod or $\g_\oa$-Mod, we denote by $\add(\mc C)$ the category of all modules isomorphic to direct summands 
of objects in $\mc C$. Also, we set $\langle \mc C\rangle$ to be the full subcategory of all modules isomorphic to 
subquotients of modules in~$\mc C$. Let~$\mathrm{Coker}(\widetilde\cF\otimes M)$ denote the {\em coker-category} of $M$ consisting of all ${\g}$-modules $X$ that have a presentation 
$$A\to B \to X\to 0,$$ where 
$A,B\in \add(\widetilde\cF\otimes M)$. 
Similarly we define the coker-category $\mathrm{Coker}(\cF\otimes N)$ of 
$\fg_\oa$-modules $N$ (see  \cite{MaSt08}). 

\subsubsection{} \label{Sect422} For a given  $(\widetilde{U},U)$-bimodule $Y$, we denote by $Y^\ad$  the restriction of $Y$ to the adjoint action of $\g_\oa$. This is the restriction via  $U\hookrightarrow \widetilde{U}\otimes U^{\text{op}},~X\mapsto X\otimes 1-1\otimes X.$ 
 Let $\mc B$ denote the category of  finitely-generated $(\widetilde{U}, U)$-bimodules $N$ for which $N^{\ad}$ is a  direct sum of modules in $\mc F$.  Let $J\subset U$ be a two-sided ideal, denote by $\mc B(J)$ the full subcategory of $\mc B$ consisting of bimodules $N$ such that~$NJ=0$. Also, we set $\mc B_J:= \bigcup_{n\geq 1} \mc B(J^n)$.

\begin{comment}
For any $M\in \mf \g\mod$ (resp. $M\in \mf \g_\oa\mod$), $N\in \mf \g_\oa\mod$, we set $\widetilde{\mc L}(N,M)$ (resp. ${\mc L}(N,M)$) to be the  maximal $(\widetilde{U},U)$-submodule (resp. $(U,U)$-submodule) of~$\Hom_{\mC}(N,M)$ such that~$\Hom_{\mC}(N,M)^{\ad}$ is a  direct sum of modules in $\mc F$. Then we have a canonical monomorphism \begin{align}
&\iota_M:\widetilde{U}/\text{Ann}_{\widetilde{U}}(M)\hookrightarrow \widetilde{\cL}(M,M),~\text{ for }M\in \mf \g\mod,\\
&\iota_M:U/\text{Ann}_{U}(M)\hookrightarrow \cL(M,M),~\text{ for }M\in \mf \g_\oa\mod. \label{eq::KSProb} 
\end{align}
\end{comment}

For a given $\mf g$-module $M$ and a given $\g_\oa$-module $N$, we set ${\mc L}(N,M)$  to be the  maximal $(\widetilde{U},{U})$-submodule  of~$\Hom_{\mC}(N,M)^{\ad}$ that belongs to $\mc B$. Namely,~$\mc L(M,N)$ is  the  maximal submodule which is a  direct sum of modules in $\mc F$ under the usual adjoint action.  We have a canonical monomorphism \begin{align}
&\widetilde{\iota}_M:\widetilde{U}/\text{Ann}_{\widetilde{U}}(M)\hookrightarrow \text{Hom}_{\mathbb C}(M,M)^{\text{ad}},~\text{ for }M\in \mf \g\mod. \label{eq::KSProb}   
%&\iota_N:U/\text{Ann}_{U}(N)\hookrightarrow \text{End}^{\text{adf}}_{\mathbb C}(N,N),~\text{ for }M\in \mf \g_\oa\mod. \label{eq::KSProb} 
\end{align}

The question of surjectivity of the $\widetilde{\iota}_M$ is known as the {\em Kostant's problem}; see   \cite{Jo80,Gor3,MaMe12}.  By slight abuse of notation, we will use the notations $\mc L(-,-)$ and $\iota_{(-)}$ for the corresponding functor applied to the case of  $\mf g=\mf g_\oa$.

%For any $M\in \mf \g\mod$ (resp. $N\in \mf \g_\oa\mod$), we define similar bimodules. Namely, we set $\text{End}^{\text{adf}}_{\mathbb C}(M,M)$ (resp. $\text{End}^{\text{adf}}_{\mathbb C}(N,N)$) to be the  maximal $(\widetilde{U},\widetilde U)$-submodule (resp. $(U,U)$-submodule) of~$\Hom_{\mC}(M,M)$ (resp. $\Hom_{\mC}(N,N)$) which is a  direct sum of modules in $\mc F$ under the usual adjoint action. 

\subsection{Equivalence} \label{Sect43} We are going to establish an equivalence between  $\widetilde{\mc N}(\Lambda, \zeta)$ and a category of Harish-Chandra $(\widetilde{U},U)$-bimodules. The following theorem established in \cite[Theorem~3.1]{CC} is a variation of~\cite[Theorem~3.1]{MS}.

\begin{thm}\label{thm::CCthm31}
	Let $M\in \g_\oa$-Mod.   %$I$:={\emph Ann}_{U}(M)$.
	Set $I$ to be  the annihilator ideal of $M$. Suppose that the monomorphism $\iota_M$ in \eqref{eq::KSProb} is an isomorphism and $M$ is projective in 
	$\langle \cF\otimes M\rangle$. Then 
	$$-\otimes_{U}M\,:\;\;\; \mathcal B(I)\,\to\, \mathrm{Coker}(\widetilde{\cF}\otimes \mathrm{Ind}(M))$$
	is an equivalence of categories with inverse $\mathcal L(M,-)$.
\end{thm}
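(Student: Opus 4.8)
The plan is to run the argument of \cite[Theorem~3.1]{MS} relative to the finite free ring extension $U=U(\g_\oa)\subseteq\widetilde U=U(\g)$, reducing the statement to its $\g_\oa$-version (namely \cite[Theorem~3.1]{MS} applied to $M$) by means of two Frobenius-reciprocity isomorphisms, the delicate one being the compatibility of $\mc L(M,-)$ with induction. First I would set up the formal apparatus: for a $(\widetilde U,U)$-bimodule $X$ and a $\g$-module $N$, tensor--hom gives a natural isomorphism $\Hom_{\g}(X\otimes_U M,N)\cong\Hom_{(\widetilde U,U)}\big(X,\Hom_{\mathbb C}(M,N)\big)$, where $\Hom_{\mathbb C}(M,N)$ carries the left $\widetilde U$-action from $N$ and the right $U$-action from $M$; since $X\in\mc B$ has locally finite semisimple adjoint $\g_\oa$-action, every bimodule map out of $X$ factors through the maximal Harish-Chandra sub-bimodule $\mc L(M,N)$. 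Hence $-\otimes_U M$ is left adjoint to $\mc L(M,-)$, and it suffices to prove that the unit $\eta_X\colon X\to\mc L(M,X\otimes_U M)$ is an isomorphism for $X\in\mc B(I)$ and the counit $\varepsilon_N\colon\mc L(M,N)\otimes_U M\to N$ is an isomorphism for $N\in\mathrm{Coker}(\widetilde{\cF}\otimes\Ind M)$.

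Next I would check that the two functors land in the asserted subcategories and have enough exactness. On one side, $\mc L(M,N)$ is annihilated on the right by $I=\operatorname{Ann}_U(M)$ and sits inside the exact functor $\Hom_{\mathbb C}(M,-)$, so $\mc L(M,-)$ is a left exact functor into $\mc B(I)$; moreover it is \emph{exact} on $\mathrm{Coker}(\widetilde{\cF}\otimes\Ind M)$, because $\Ind M$ is projective there --- indeed $\Hom_{\g}(\Ind M,-)\cong\Hom_{\g_\oa}(M,\Res(-))$, the functor $\Res$ is exact and sends $\mathrm{Coker}(\widetilde{\cF}\otimes\Ind M)$ into $\langle\cF\otimes M\rangle$ (using $\Res\Ind M\cong\Lambda(\g_\ob)\otimes M$), and $M$ is projective in $\langle\cF\otimes M\rangle$ by hypothesis. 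On the other side, $-\otimes_U M$ is right exact, satisfies $(V\otimes X)\otimes_U M\cong V\otimes(X\otimes_U M)$ for $V\in\widetilde{\cF}$, and sends each projective generator $V\otimes(\widetilde U/\widetilde U I)$ of $\mc B(I)$ to $V\otimes\Ind M$ (using $(\widetilde U/\widetilde U I)\otimes_U M\cong\Ind M$, which holds since $IM=0$); so $-\otimes_U M$ maps $\mc B(I)$ into $\mathrm{Coker}(\widetilde{\cF}\otimes\Ind M)$.

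The two reciprocity isomorphisms are: (i) $(\widetilde U\otimes_U X)\otimes_U M\cong\Ind(X\otimes_U M)$, immediate from associativity; and (ii) $\mc L(M,\Ind N)\cong\widetilde U\otimes_U\mc L(M,N)$ as $(\widetilde U,U)$-bimodules, for $N\in\g_\oa$-Mod. For (ii) one begins with the bimodule isomorphism $\Hom_{\mathbb C}(M,\Ind N)\cong\widetilde U\otimes_U\Hom_{\mathbb C}(M,N)$, valid because $\widetilde U$ is free of finite rank over $U$ by PBW, and then verifies that the maximal Harish-Chandra sub-bimodule is compatible with $\widetilde U\otimes_U(-)$: since $\ad\g_\oa$ acts locally finitely and semisimply on $\widetilde U$ (as $\g_\oa$ is reductive), a finite tensor $\sum_i u_i\otimes\phi_i$ spans a finite-dimensional $\ad\g_\oa$-module exactly when all the $\phi_i$ do. Granting (i) and (ii): for a generating object $\widetilde U\otimes_U X$ of $\mc B(I)$, with $X$ in the $(U,U)$-bimodule category of the $\g_\oa$-statement, one computes $\mc L\big(M,(\widetilde U\otimes_U X)\otimes_U M\big)\cong\mc L\big(M,\Ind(X\otimes_U M)\big)\cong\widetilde U\otimes_U\mc L(M,X\otimes_U M)\cong\widetilde U\otimes_U X$, the last step by \cite[Theorem~3.1]{MS} for $M$; tracing the maps shows this identification is the unit, so $\eta$ is an isomorphism on generators, and dually $\varepsilon$ is an isomorphism on the generators $\Ind N$ of $\mathrm{Coker}(\widetilde{\cF}\otimes\Ind M)$. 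One then extends both statements to all objects by resolving by generators and applying the five lemma, which is legitimate because $-\otimes_U M$ is right exact and $\mc L(M,-)$ is exact on $\mathrm{Coker}(\widetilde{\cF}\otimes\Ind M)$. Thus $\eta$ and $\varepsilon$ are isomorphisms and $-\otimes_U M$ is an equivalence with inverse $\mc L(M,-)$.

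The hard part is isomorphism (ii): showing that forming the maximal Harish-Chandra sub-bimodule commutes with induction of bimodules from $U$ to $(\widetilde U,U)$; everything else is the \cite{MS} machinery transported verbatim along $U\subseteq\widetilde U$, with the two hypotheses --- that $\iota_M$ is an isomorphism (affirmative Kostant problem for the $\g_\oa$-module $M$) and that $M$ is projective in $\langle\cF\otimes M\rangle$ --- used exactly where they are used in \cite{MS}. Since the statement is precisely \cite[Theorem~3.1]{CC}, proved there as a variation of \cite[Theorem~3.1]{MS}, one may alternatively just invoke that reference.
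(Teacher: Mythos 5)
The paper does not actually prove this statement: it is imported verbatim from \cite[Theorem 3.1]{CC}, where it is established as a variation of \cite[Theorem 3.1]{MS}, so there is no in-paper argument to compare yours against, and your closing remark that one may simply invoke that reference is exactly what the paper does. Your sketch is nonetheless a faithful reconstruction of how the MS/CC argument runs: the adjunction between $-\otimes_{U}M$ and $\mc L(M,-)$, the fact that the projective generators $V\otimes(\widetilde{U}/\widetilde{U}I)$ of $\mc B(I)$ are sent to $V\otimes \Ind (M)$, and your key isomorphism (ii) are all correct --- for (ii) the point is that $\widetilde{U}\cong \Lambda(\g_\ob)\otimes U$ as a right $U$-module via an $\ad\,\g_\oa$-equivariant splitting, and extracting the maximal $\cF$-type part commutes with tensoring by the finite-dimensional semisimple $\ad$-module $\Lambda(\g_\ob)$. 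One justification is misstated, although the correct ingredients appear in the same sentence: exactness of $\mc L(M,-)$ on $\mathrm{Coker}(\widetilde{\cF}\otimes \Ind(M))$ does not follow from projectivity of $\Ind(M)$ in that category (that only gives exactness of $\Hom_{\g}(\Ind (M),-)$); it follows by lifting each finite-dimensional $\ad\,\g_\oa$-submodule $E\subseteq \mc L(M,N_2)$, reinterpreted as a $\g_\oa$-map $E\otimes M\to \Res N_2$, along $\Res N_1\onto \Res N_2$, using that $\Res$ carries the coker category into $\langle\cF\otimes M\rangle$ and that $E\otimes M$ is projective there because $M$ is. With that repaired, and with the routine but necessary verification that your isomorphisms (i)--(ii) really compute the unit and counit on the chosen generators (and that $\mc L(M,N)$ is finitely generated, which then follows from the presentation argument), your outline is a sound proof and agrees in substance with the argument of \cite{CC}.
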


Theorem \ref{thm::CCthm31} will be the main tool in the proof of Theorem B. Before 
giving the proofs, we need several preparatory results. For a given $\mu\in \h^\ast$, recall that  $\chi_\mu^{\mf l_\zeta}:  Z(\mf l_\zeta)\rightarrow \mathbb C$ denotes the central character of $\mf l_\zeta$ associated with $\mu$. %We  set $I_\mu:=\text{Ker}\chi_\mu^{\mf l_\zeta}$. 

 Recall that $\mf l_\zeta$ is a Levi subalgebra in a parabolic decomposition of $\mf g_\oa$, giving  a corresponding parabolic subalgebra $\mf q_\zeta$ of $\mf g_\oa$. For $\mu \in \h^\ast$, we define \[M^n(\mu, \zeta):= U\otimes_{\mf q_\zeta} Y_\zeta^n(\mu, \zeta),\]  where $Y_\zeta^n(\mu, \zeta):=U(\mf l_\zeta)/(\text{Ker}\chi_\mu^{\mf l_\zeta})^n U(\mf l_\zeta)\otimes_{U(\mf n_\zeta)}\mathbb C_\zeta$; see  \cite[Section 5]{MS}. We put $I_\mu:=U \text{Ker}\chi_\mu^{\oa}$. We set $\mc H(I^n_\mu)$ to be the category of Harish-Chandra $({U},U)$-bimodules $X$ such that $XI_\mu^n=0$. The following lemma is established in \cite[Theorem 5.3]{MS}.
\begin{lem}[Mili{\v{c}}i{\'c}-Soergel] \label{lem::MSthm53}
	Let $\la\in \h^\ast$ be dominant  such that $W_\zeta =W_\la$. Then the functor $N\rightarrow N\otimes_U M^n(\la, \zeta)$ provides an equivalence $T_n$ from  $\mc N(\Lambda, \zeta)^n$ to $\mc H(I_\la^n)$, for each $n\geq 1$.  This  gives rise to an  equivalence $T$ from $\bigcup_{n\geq 1}\mc H(I^n_\la)$ to $\mc N(\Lambda, \zeta)$.
\end{lem}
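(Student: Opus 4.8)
The statement is precisely \cite[Theorem~5.3]{MS}: here $\mf g$ plays the role of the reductive Lie algebra of \emph{loc.\ cit.}\ and $U=U(\mf g_\oa)$, so the plan is simply to quote that theorem; no new argument is needed. For orientation I record the skeleton of the proof.

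The core is the case $n=1$, which is obtained by specializing the general equivalence criterion \cite[Theorem~3.1]{MS} (of which Theorem~\ref{thm::CCthm31} is the $(\widetilde U,U)$-variant) to the module $M=M(\la,\zeta)$. This requires three inputs: that the annihilator of $M(\la,\zeta)$ in $U$ equals $I_\la=U\,\mathrm{Ker}\,\chi_\la^{\oa}$; that Kostant's problem has a positive answer for $M(\la,\zeta)$, i.e.\ the monomorphism $\iota_{M(\la,\zeta)}$ of \eqref{eq::KSProb} is an isomorphism; and that $M(\la,\zeta)$ is projective in $\langle\mc F\otimes M(\la,\zeta)\rangle$. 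Granting these, \cite[Theorem~3.1]{MS} produces an equivalence $-\otimes_U M(\la,\zeta)\colon \mc H(I_\la)\to \mathrm{Coker}(\mc F\otimes M(\la,\zeta))$, and one identifies the target with $\mc N(\Lambda,\zeta)^1$: the projective objects of $\mc N(\Lambda,\zeta)^1$ are exactly the direct summands of $V\otimes M(\la,\zeta)$, $V\in\mc F$, so every object of $\mc N(\Lambda,\zeta)^1$ admits a two-step presentation by modules in $\add(\mc F\otimes M(\la,\zeta))$.

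For general $n$ one runs the same argument with the thickened module $M^n(\la,\zeta)=U\otimes_{\mf q_\zeta}Y^n_\zeta(\la,\zeta)$: its annihilator in $U$ is $I_\la^n$, Kostant's problem remains positive, and it is projective in $\langle\mc F\otimes M^n(\la,\zeta)\rangle$, whence \cite[Theorem~3.1]{MS} yields the equivalence $T_n\colon\mc N(\Lambda,\zeta)^n\to\mc H(I_\la^n)$, $N\mapsto N\otimes_U M^n(\la,\zeta)$. Finally, since every module in $\mc N(\Lambda,\zeta)$ has finite length it is annihilated by some power of $\mf m$, so $\mc N(\Lambda,\zeta)=\bigcup_{n\ge1}\mc N(\Lambda,\zeta)^n$; the $T_n$ are compatible with the inclusions $\mc N(\Lambda,\zeta)^n\hookrightarrow\mc N(\Lambda,\zeta)^{n+1}$ and $\mc H(I_\la^n)\hookrightarrow\mc H(I_\la^{n+1})$, so they glue to the asserted equivalence $T\colon\bigcup_{n\ge1}\mc H(I_\la^n)\to\mc N(\Lambda,\zeta)$.

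The delicate point throughout is the positive solution to Kostant's problem for $M^n(\la,\zeta)$; this is where the hypothesis $W_\la=W_\zeta$ is indispensable (it forces $Y_\zeta(\la,\zeta)$ to be projective in the relevant category of Whittaker $\mf l_\zeta$-modules and $M(\la,\zeta)$ to be as projective as possible), and it is the technical heart of \cite[Section~5]{MS}.
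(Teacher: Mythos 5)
Your proposal is correct and coincides with the paper's treatment: the paper gives no independent argument either, but simply invokes \cite[Theorem~5.3]{MS}, and your sketch of its internal structure (the equivalence criterion of \cite[Theorem~3.1]{MS} applied to $M^n(\la,\zeta)$ via the annihilator computation, the positive answer to Kostant's problem, projectivity in $\langle\mc F\otimes M^n(\la,\zeta)\rangle$, and the passage to the union over $n$) matches exactly the facts the paper later records in Lemma~\ref{lem::MSlem} from \cite[Lemma~5.11, Proposition~5.5, Lemma~5.14]{MS}. The only cosmetic remark is that the tensoring functor $-\otimes_U M^n(\la,\zeta)$ naturally goes from $\mc H(I_\la^n)$ to $\mc N(\Lambda,\zeta)^n$ (with inverse $\mc L(M^n(\la,\zeta),-)$), a direction you state correctly in your $n=1$ paragraph and which the lemma's own wording inverts.
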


 Before proving Theorem B, we collect some useful facts from \cite{MS} as follows. 
\begin{lem}[Mili{\v{c}}i{\'c}-Soergel] \label{lem::MSlem}
	Let $\la \in \h^\ast$ be dominant with $W_\la =W_\zeta$. Set $\Lambda:=\la +\Upsilon$. Then for any $n\geq 1$ we have \begin{itemize}
		\item[(1)] $\mc N(\Lambda, \zeta)^n$ is stable under tensoring with  finite-dimensional  $\mf g_\oa$-modules.
		\item[(2)] $\mc N(\Lambda,\zeta)^n$ has enough projective modules, and ${M}^n(\la, \zeta)$ is projective in $\mc N(\zeta)^n$. 
		\item[(3)] ${\emph Ann}_U M^n(\la,\zeta) = I^n_\la$, and $\iota_{M^n(\la,\zeta)}$ is an isomorphism  for $M= M^n(\la,\zeta)$ in \eqref{eq::KSProb}.
		\item[(4)] For any $n>m$ the canonical epimorphism $ M^n(\la,\zeta)\onto M^m(\la,\zeta)$ has kernel {\em $\text{Ker}(\chi_\la^\oa)^m$}$M^n(\la,\zeta)$.
	\end{itemize}
\end{lem}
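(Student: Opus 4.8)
The plan is to obtain each of the four assertions by importing the corresponding result of Mili{\v{c}}i{\'c} and Soergel from \cite[Sections~4 and~5]{MS}. Everything in sight --- the category $\mc N(\Lambda,\zeta)$, the modules $M^n(\la,\zeta)$, and the ideal $I_\la = U\,\text{Ker}\chi^\oa_\la$ --- lives inside the reductive Lie algebra $\mf g_\oa$ and coincides with the objects treated there, and the hypothesis $W_\la = W_\zeta$ is precisely what places us in the setting of \cite[Section~5]{MS}. Thus the work is essentially to match notations and to supply the few short arguments needed to read the statements off from \cite{MS}.

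For part (1), recall that $\mc N(\zeta)^n$ consists of those $M$ with $\theta_M(\mf m)^n M = 0$. By \cite[Theorems~4.1 and~4.3]{MS} the homomorphism $\theta$ is compatible with tensoring by a finite-dimensional $\mf g_\oa$-module $V$, in the sense that $\theta_{V\otimes M}(s) = \text{Id}_V\otimes\theta_M(s)$ for $s\in\hat S^W$, so $V\otimes M\in\mc N(\zeta)^n$ whenever $M\in\mc N(\zeta)^n$. Combining this with \cite[Lemma~4.3]{MS} --- stability of $\mc N(\Lambda,\zeta)$ under such tensoring, which holds because the $\mf l_\zeta$-central characters occurring in $V\otimes M$ differ from those in $M$ by weights of $V$, hence by integral weights, so they stay in the coset $\Lambda$ --- and with the identity $\mc N(\Lambda,\zeta)^n = \mc N(\Lambda,\zeta)\cap\mc N(\zeta)^n$, gives (1). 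For part (2), note first that $M^n(\la,\zeta)$ lies in $\mc N(\zeta)^n$ since $Y_\zeta^n(\la,\zeta)$ is annihilated by $(\text{Ker}\chi^{\mf l_\zeta}_\la)^n$. Its projectivity in $\mc N(\zeta)^n$ follows, as in \cite[Section~5]{MS}, from the adjunction $\Hom_{\mf g_\oa}(M^n(\la,\zeta),-)\cong\Hom_{\mf q_\zeta}(Y_\zeta^n(\la,\zeta),\Res^{\mf g_\oa}_{\mf q_\zeta}(-))$, exactness of restriction, and projectivity of $Y_\zeta^n(\la,\zeta)$ in the generalized-central-character block of $\mf l_\zeta$-modules determined by $\chi^{\mf l_\zeta}_\la$ (\cite{Ko78}; here $W_\la=W_\zeta$ is used to identify that block). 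Enough projectives in $\mc N(\Lambda,\zeta)^n$ then follows because, by Lemma~\ref{lem::MS21}, every simple object of $\mc N(\Lambda,\zeta)^n$ is a quotient of some $M^n(\mu,\zeta)$, and these, together with the direct summands of their tensor products by finite-dimensional $\mf g_\oa$-modules --- still projective by (1) --- furnish the required projective covers.

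Part (3) is the deepest input and the step I expect to be the main obstacle. The inclusion $I_\la^n\subseteq\text{Ann}_U M^n(\la,\zeta)$ is routine: $\text{Ker}\chi^\oa_\la$ maps, under the Harish-Chandra homomorphism $Z(\mf g_\oa)\to Z(\mf l_\zeta)$ attached to $\mf q_\zeta$, into $\text{Ker}\chi^{\mf l_\zeta}_\la$, which acts on $Y_\zeta^n(\la,\zeta)$ with vanishing $n$-th power; since $U$ is free over $U(\mf q_\zeta)$ and $\text{Ker}\chi^\oa_\la$ is central, $I_\la^n = U(\text{Ker}\chi^\oa_\la)^n$ annihilates $M^n(\la,\zeta) = U\otimes_{\mf q_\zeta}Y_\zeta^n(\la,\zeta)$. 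The reverse inclusion, together with the claim that the monomorphism $\iota_{M^n(\la,\zeta)}$ of \eqref{eq::KSProb} (for $\mf g=\mf g_\oa$) is an isomorphism --- i.e. the affirmative answer to Kostant's problem for $M^n(\la,\zeta)$ over $\mf g_\oa$ --- is \cite[Theorem~5.3]{MS} and the auxiliary results of \cite[Section~5]{MS}: for $n=1$ it rests on Kostant's analysis of $Y_\zeta(\la,\zeta)$ in \cite{Ko78}, while the passage to general $n$ is an induction on $n$ along the filtration of $M^n(\la,\zeta)$ by the submodules $\text{Ker}(\chi^\oa_\la)^k M^n(\la,\zeta)$, using the $n=1$ case together with part (4). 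I will simply invoke these rather than reprove them.

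Finally, part (4) is immediate from the construction. The canonical epimorphism $M^n(\la,\zeta)\onto M^m(\la,\zeta)$ is the image under the functor $U\otimes_{\mf q_\zeta}(-)$, which is exact since $U$ is free over $U(\mf q_\zeta)$, of the canonical surjection $Y_\zeta^n(\la,\zeta)\onto Y_\zeta^m(\la,\zeta)$, whose kernel is $(\text{Ker}\chi^{\mf l_\zeta}_\la)^m Y_\zeta^n(\la,\zeta)$ by the very definition of the $Y_\zeta^k$'s. Hence this kernel is carried onto $(\text{Ker}\chi^\oa_\la)^m M^n(\la,\zeta)$, once one observes that $\text{Ker}\chi^\oa_\la$ and $\text{Ker}\chi^{\mf l_\zeta}_\la$ generate the same $Z(\mf l_\zeta)$-submodule of $Y_\zeta^n(\la,\zeta)$ --- which again uses $W_\la=W_\zeta$ --- exactly as in \cite[Section~5]{MS}.
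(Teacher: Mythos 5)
Your overall route is the same as the paper's: this lemma is an import from \cite{MS}, and the paper's proof consists exactly of the citations you invoke --- \cite[Theorem 4.1, Lemma 4.3]{MS} for (1), \cite[Lemma 5.11, Proposition 5.5]{MS} together with the equivalence of Lemma \ref{lem::MSthm53} for (2)--(3), and \cite[Lemma 5.14]{MS} for (4). Your sketches for (1), (3) and (4) are consistent with this; in particular you correctly isolate the genuinely nontrivial input in (4), namely that the images of $(\mathrm{Ker}\,\chi^{\oa}_\la)^m$ and $(\mathrm{Ker}\,\chi^{\mf l_\zeta}_\la)^m$ cut out the same submodule of $M^n(\la,\zeta)$, which uses both the dominance of $\la$ and $W_\la=W_\zeta$ and is precisely \cite[Lemma 5.14]{MS}.

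The one step that does not work as written is your argument for enough projectives in $\mc N(\Lambda,\zeta)^n$. You take the modules $M^n(\mu,\zeta)$ surjecting onto the simple objects and declare them, and the summands of their tensor products with finite-dimensional modules, ``still projective by (1)''. Part (1) only asserts that the block is stable under tensoring; it says nothing about projectivity, and $M^n(\mu,\zeta)$ is projective only for the fixed dominant $\la$ with $W_\la=W_\zeta$ --- already for $\zeta=0$ these are thickened Verma modules, which fail to be projective for non-dominant $\mu$, and tensoring a non-projective object with a finite-dimensional module does not make it projective. The correct statement is that the projectives of $\mc N(\Lambda,\zeta)^n$ are the direct summands of $E\otimes M^n(\la,\zeta)$ for finite-dimensional $E$ and the one fixed dominant $\la$: such objects are projective because $E\otimes-$ is exact with exact biadjoint $E^{\ast}\otimes-$, and the fact that every simple of the block is a quotient of such a summand is a translation-type statement that \cite{MS} obtain on the bimodule side, where $\mc H(I^n_\la)$ has enough projectives (summands of $E\otimes U/I^n_\la$), and then transport through the equivalence $T_n$ of Lemma \ref{lem::MSthm53} --- which is exactly how the paper phrases its proof. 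So either repair your sketch along these lines or, as you already do for (3), defer this point to \cite{MS}.
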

\begin{proof}
	Part $(1)$ is a consequence of \cite[Theorem 4.1, Lemma 4.3]{MS}. As has been noted in the proof of \cite[Theorem 5.3]{MS}, $\mc H(I_\la^n)$ has enough projective modules. Therefore, conclusions in Part $(2)$ and Part $(3)$ follow from \cite[Lemma 5.11, Proposition 5.5]{MS} and Lemma \ref{lem::MSthm53}. Part (4) is taken from \cite[Lemma 5.14]{MS}.
\end{proof}

\begin{rem}
 The facts $(1)-(3)$ are also given in  the proof of \cite[Theorem 5.3]{MS}.
\end{rem}

We set $\mc B_\la:=\mc B_{I_\la}$ to be the full subcategory consisting of objects $$\{X \in \mc B|~XI_\la^n = 0, \text{ for }n>>0\}.$$  Recall that $\Lambda:=\la+\Upsilon$. We now in a position to state the following equivalence.
\begin{thm} \label{thm::2} Suppose that $\la\in \mf h^\ast$ is dominant such that $W_\zeta =W_\la$.  	Then the functor $X \mapsto \lim\limits_{\leftarrow}X\otimes_U  M^n(\la,\zeta)$ gives an equivalence of categories $\widetilde{T}: \mc B_\la \rightarrow \widetilde{\mc N}(\Lambda, \zeta).$
\end{thm}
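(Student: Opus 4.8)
The plan is to lift the Mili\v{c}i\'c--Soergel equivalences from the even part to the super setting by exploiting the two triangles of functors already at our disposal: on the bimodule side, $\mathcal{B}_\la$ is built from the $(U,U)$-Harish-Chandra bimodule categories $\mathcal{H}(I_\la^n)$ by inducing up the left action ($X \mapsto \widetilde{U}\otimes_U X$, or rather by viewing $(\widetilde{U},U)$-bimodules as $(U,U)$-bimodules with extra structure), while on the module side $\widetilde{\mc N}(\Lambda,\zeta)$ is built from $\mc N(\Lambda,\zeta)$ by the induction functor $\Ind$. I would first verify that the projective generator $M^n(\la,\zeta)$ of $\mc N(\Lambda,\zeta)^n$ satisfies the hypotheses of Theorem~\ref{thm::CCthm31}: by Lemma~\ref{lem::MSlem}(3), $\iota_{M^n(\la,\zeta)}$ is an isomorphism with annihilator $I_\la^n$, and by Lemma~\ref{lem::MSlem}(2) it is projective in $\mc N(\zeta)^n$, hence (using Lemma~\ref{lem::MSlem}(1)) projective in $\langle \cF\otimes M^n(\la,\zeta)\rangle$. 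Theorem~\ref{thm::CCthm31} then gives, for each $n$, an equivalence $-\otimes_U M^n(\la,\zeta) \colon \mathcal{B}(I_\la^n) \to \mathrm{Coker}(\widetilde{\cF}\otimes \Ind M^n(\la,\zeta))$ with inverse $\mc L(M^n(\la,\zeta),-)$.

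The second step is to identify the target coker-category with $\widetilde{\mc N}(\Lambda,\zeta)^n$. Since $M^n(\la,\zeta)$ is a projective generator of $\mc N(\Lambda,\zeta)^n$, every object of $\mc N(\Lambda,\zeta)^n$ has a presentation by objects of $\add(\cF\otimes M^n(\la,\zeta))$, and applying the exact functor $\Ind$ (which by the discussion in Section~\ref{sect412} restricts to a functor $\mc N(\Lambda,\zeta)^n \to \widetilde{\mc N}(\Lambda,\zeta)^n$) together with the isomorphism $\Ind(V\otimes X)\cong V\otimes \Ind X$ for $V\in\cF$, one sees that $\mathrm{Coker}(\widetilde{\cF}\otimes \Ind M^n(\la,\zeta))$ consists exactly of the subquotients of $\Ind X$ for $X\in\mc N(\Lambda,\zeta)^n$; as already observed before the statement of Theorem~\ref{thm::2}, this is precisely $\widetilde{\mc N}(\Lambda,\zeta)^n$. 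Thus for each $n$ we obtain an equivalence $\widetilde{T}_n\colon \mathcal{B}(I_\la^n)\to \widetilde{\mc N}(\Lambda,\zeta)^n$, $X\mapsto X\otimes_U M^n(\la,\zeta)$, with inverse $\mc L(M^n(\la,\zeta),-)$; note here that $\mathcal{B}(I_\la^n)$ — the $(\widetilde{U},U)$-bimodules killed by $I_\la^n$ on the right — is what Theorem~\ref{thm::CCthm31} produces, and these assemble to $\mc B_\la=\bigcup_n\mathcal{B}(I_\la^n)$.

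The third step is to pass to the limit over $n$ compatibly. Lemma~\ref{lem::MSlem}(4) identifies the transition map $M^n(\la,\zeta)\onto M^m(\la,\zeta)$ for $n>m$ and its kernel, which is exactly the datum needed to check that the equivalences $\widetilde{T}_n$ are compatible with the inclusions $\mathcal{B}(I_\la^m)\hookrightarrow \mathcal{B}(I_\la^n)$ and $\widetilde{\mc N}(\Lambda,\zeta)^m \hookrightarrow \widetilde{\mc N}(\Lambda,\zeta)^n$: concretely, for $X\in\mathcal{B}(I_\la^m)$ one has $X\otimes_U M^n(\la,\zeta)\cong X\otimes_U M^m(\la,\zeta)$ because $X$ annihilates $\mathrm{Ker}(\chi_\la^\oa)^m$, and conversely every object of $\widetilde{\mc N}(\Lambda,\zeta)$ lies in some $\widetilde{\mc N}(\Lambda,\zeta)^n$ because it has finite length (Corollary~\ref{prop::3}) and $\hat S^W$ acts through a finite-dimensional quotient on it. Hence $\widetilde{T}=\varinjlim \widetilde{T}_n$, which on objects is the inverse limit formula $X\mapsto \varprojlim_n X\otimes_U M^n(\la,\zeta)$ stabilising at the appropriate stage, is a well-defined equivalence $\mc B_\la\to\widetilde{\mc N}(\Lambda,\zeta)$ with inverse $X\mapsto \varinjlim_n \mc L(M^n(\la,\zeta),X)$. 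This is exactly the direct-limit argument Mili\v{c}i\'c and Soergel use to pass from Lemma~\ref{lem::MSthm53} to the block-level statement, now run one level up.

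I expect the main obstacle to be the bookkeeping at the $(\widetilde{U},U)$-bimodule level in the first two steps: Theorem~\ref{thm::CCthm31} is stated for a single module $M\in\g_\oa\text{-Mod}$ and produces $(\widetilde{U},U)$-bimodules, so one must be careful that the annihilator ideal appearing there is the $U$-annihilator $I_\la^n$ (right action), that $\mathcal{B}(I_\la^n)$ as defined in Section~\ref{Sect422} agrees with the category output by the theorem, and that the adjoint-$\g_\oa$-semisimplicity condition built into $\mc B$ is automatically satisfied. A secondary technical point is checking that the coker-category $\mathrm{Coker}(\widetilde{\cF}\otimes\Ind M^n(\la,\zeta))$ really coincides with $\widetilde{\mc N}(\Lambda,\zeta)^n$ and not merely with a category equivalent to it — this requires knowing that $\Ind M^n(\la,\zeta)$ is a projective generator of $\widetilde{\mc N}(\Lambda,\zeta)^n$, which follows from projectivity of $M^n(\la,\zeta)$ together with $\Ind$ being left adjoint to the exact functor $\Res$, but the generation statement needs the finite-length input and Theorem~\ref{mainthm1typeI}(2) (or Theorem~\ref{mainthm1}(2)) to control the simple objects. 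Once these identifications are in place, everything else is the standard limit argument.
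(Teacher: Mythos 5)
Your proposal follows essentially the same route as the paper's proof: verify the hypotheses of Theorem \ref{thm::CCthm31} for $M^n(\la,\zeta)$ via Lemma \ref{lem::MSlem}, identify $\mathrm{Coker}(\widetilde{\cF}\otimes \Ind M^n(\la,\zeta))$ with $\widetilde{\mc N}(\Lambda,\zeta)^n$ by matching projective objects, and pass to the limit over $n$ using Lemma \ref{lem::MSlem}(4). The only slip is calling $M^n(\la,\zeta)$ (resp.\ $\Ind M^n(\la,\zeta)$) a projective generator --- it is projective but not a generator; what is actually needed, and what the paper uses via Lemma \ref{lem::MSthm53} and \cite[Proposition 2.2.1]{Co}, is that $\add(\cF\otimes M^n(\la,\zeta))$, resp.\ $\add(\widetilde{\cF}\otimes \Ind M^n(\la,\zeta))$, contains all projectives of the relevant category.
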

\begin{proof}  By definition 
	 $\widetilde{\mc N}(\Lambda, \zeta)^n$ is the full subcategory of $\widetilde{\mc N}(\zeta)$ consisting of modules $M$ such that $\Res M\in \mc N(\Lambda, \zeta)^n$, for any $n\geq 1$.  By \cite[Proposition 2.2.1]{Co}, we know that $\widetilde{\mc N}(\Lambda, \zeta)^n$  has enough projective objects, and any projective module in $\widetilde{\mc N}(\Lambda, \zeta)^n$ is a direct summand of   $\Ind X$,  for  projective object $X\in \mc N(\Lambda, \zeta)^n$. We claim that the functor $ -\otimes_{U} M^n(\la,\zeta)$ gives an equivalence  
$\mathcal B(I^n_\la)\cong \widetilde{\mc N}(\Lambda, \zeta)^n.$

	 Using Part $(2)$ of Lemma \ref{lem::MSlem}, we know  $M^n(\la,\zeta)$ is projective in $\langle \mc F\otimes M^n(\la,\zeta) \rangle \subset {\mc N}(\zeta)^n$. Then  by Part $(3)$ of Lemma \ref{lem::MSlem} and Theorem \ref{thm::CCthm31}, we obtain an  equivalence 
	 \begin{align}
	 &	 -\otimes_{U} M^n(\la,\zeta)\,:\;\;\; \mathcal B(I^n_\la)\,\to\, \mathrm{Coker}(\widetilde{\cF}\otimes \mathrm{Ind}(M^n(\la,\zeta))).
	 \end{align}

To complete the proof, we shall show that $\mathrm{Coker}(\widetilde{\mc F}\otimes \mathrm{Ind}(M^n(\la,\zeta))) = \widetilde{\mc N}(\Lambda ,\zeta)^n$. We may observe that $E\otimes \Ind M^n(\la,\zeta)\cong  \Ind(\Res E\otimes M^n(\la,\zeta))$ is projective in $\widetilde{\mc N}(\Lambda, \zeta)^n$, for any $E\in \widetilde{\mc F}$. Since $\widetilde{\mc N}(\Lambda, \zeta)^n$ has enough projectives, we need just to show that $\mathrm{add}(\widetilde{\mc F}\otimes \mathrm{Ind}(M^n(\la,\zeta)))$ contains all projective modules in $\widetilde{\mc N}(\Lambda,\zeta)^n$. To see this, let $P\in {\mc N}(\Lambda, \zeta)^n$ be a projective object. Since every projective module in $\mc H(I^n_\la)$ is a direct summand of  $E\otimes U/I^n_\la$, for   $E\in \mc F$, we may conclude that $P$ is a direct summand of a projective module of the form $E\otimes M^n(\la,\zeta)$ by Lemma \ref{lem::MSthm53}.  Therefore we have  $\Ind P \in \text{add}(\Ind(\mc F\otimes M^n(\la,\zeta)))$.  Now we calculate   
\begin{align*}
&\widetilde{\mc P}\subseteq \text{add}(\Ind(\mc F\otimes M^n(\la,\zeta))) \\
&\subseteq \text{add}(\Ind({\mc F}\otimes \Res \Ind M^n(\la,\zeta)))\\
&=\text{add}(\Ind\mc F\otimes  \Ind M^n(\la,\zeta)) \\
&= \text{add}(\widetilde{\mc P}\otimes  \Ind M^n(\la,\zeta))\\
&\subseteq \text{add}(\widetilde{\mc F}\otimes  \Ind M^n(\la,\zeta)).
\end{align*}
Consequently, we have equivalence $-\otimes_{U} M^n(\la,\zeta): \mathcal B(I^n_\la) \rightarrow   \widetilde{\mc N}(\Lambda ,\zeta)^n$. By Part (4) of Lemma \ref{lem::MSlem}, the functor $X \mapsto \lim\limits_{\leftarrow}X\otimes_U  M^n(\la,\zeta)$ determines an equivalence of categories $\widetilde{T}: \mc B_\la \rightarrow \widetilde{\mc N}(\Lambda, \zeta).$ This completes the proof. 
\end{proof}

\begin{cor}
Let $\la\in \mf h^\ast$ be dominant and $\zeta,\eta\in \mc I$ such that $W_\la=W_\zeta =W_\eta$. Then we have $$\widetilde{\mc N}( \la +\Upsilon,\eta) \cong \widetilde{\mc N}( \la +\Upsilon,\zeta).$$
\end{cor}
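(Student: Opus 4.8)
The plan is to read this off directly from Theorem \ref{thm::2}, once one observes that the intermediate category appearing there, $\mc B_\la$, depends only on the weight $\la$ and not on the particular $\zeta \in \mc I$ chosen. Indeed, by its definition $\mc B_\la = \mc B_{I_\la}$ with $I_\la = U\,\text{Ker}\,\chi_\la^{\oa}$, which is manifestly a function of $\la$ alone.

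First I would apply Theorem \ref{thm::2} with the pair $(\la, \zeta)$: since $\la$ is dominant and $W_\la = W_\zeta$, the hypotheses are met, and we obtain an equivalence
$$\widetilde{T}_\zeta\colon \mc B_\la \simto \widetilde{\mc N}(\la + \Upsilon, \zeta), \qquad X \mapsto \lim\limits_{\leftarrow} X \otimes_U M^n(\la, \zeta).$$
Next I would apply the same theorem to the pair $(\la, \eta)$, which is legitimate because $\la$ is dominant and $W_\la = W_\eta$ by hypothesis, yielding an equivalence
$$\widetilde{T}_\eta\colon \mc B_\la \simto \widetilde{\mc N}(\la + \Upsilon, \eta).$$
Finally, the composite $\widetilde{T}_\eta \circ \widetilde{T}_\zeta^{-1}$ is an equivalence of categories $\widetilde{\mc N}(\la + \Upsilon, \zeta) \simto \widetilde{\mc N}(\la + \Upsilon, \eta)$, which is the claim.

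There is no real obstacle here: the entire content is the remark, already made implicitly right after the statement of Theorem B, that the Harish-Chandra bimodule category $\mc B_\la$ to which $\widetilde{\mc N}(\la+\Upsilon,\zeta)$ is equivalent does not see $\zeta$. If one wished to be fully explicit, one could instead build the equivalence level by level on the truncations, using the equivalences $-\otimes_U M^n(\la,\zeta)\colon \mc B(I_\la^n) \simto \widetilde{\mc N}(\la+\Upsilon,\zeta)^n$ established inside the proof of Theorem \ref{thm::2}, together with Part $(4)$ of Lemma \ref{lem::MSlem} to pass to the inverse limit; but this merely re-derives Theorem \ref{thm::2} and is unnecessary for the statement as phrased.
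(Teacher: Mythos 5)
Your proof is correct and is precisely the argument the paper intends: the corollary is stated without proof as an immediate consequence of Theorem \ref{thm::2}, applied once with $\zeta$ and once with $\eta$, using that $\mc B_\la=\mc B_{I_\la}$ depends only on $\la$. Nothing further is needed.
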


%The following corollary is an analog of \cite[Proposition 4.4]{CC}. 
\begin{cor}	\label{cor::completion}
	Let $\la\in \mf h^\ast$ be dominant such that $W_\la =W_\zeta$. Suppose that $s$ is a simple reflection that $s$ does not lie in the integral Weyl group of $\la$. Then we have $$\widetilde{\mc N}(s\cdot \la +\Upsilon,\eta) \cong \widetilde{\mc N}(\la+\Upsilon,\zeta),$$
\end{cor}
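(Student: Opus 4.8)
The plan is to reduce the statement entirely to Theorem~\ref{thm::2} (Theorem~B). That theorem already identifies $\widetilde{\mc N}(\mu+\Upsilon,\kappa)$ with the Harish-Chandra bimodule category $\mc B_\mu$ whenever $\mu\in\h^\ast$ is dominant and $W_\kappa=W_\mu$, and by construction $\mc B_\mu=\mc B_{I_\mu}=\bigcup_{n\ge 1}\mc B(I_\mu^n)$ with $I_\mu=U\,\mathrm{Ker}\,\chi_\mu^{\oa}$ depends only on the central character $\chi_\mu^{\oa}$, hence only on the dot-orbit $W\cdot\mu$. Since $s\cdot\la$ lies in the same dot-orbit as $\la$, the Harish-Chandra homomorphism gives $\chi_{s\cdot\la}^{\oa}=\chi_\la^{\oa}$, so $I_{s\cdot\la}=I_\la$ and therefore $\mc B_{s\cdot\la}=\mc B_\la$. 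This is the exact analogue, with $\la$ replaced by $s\cdot\la$ on one side, of the equivalence $\widetilde{\mc N}(\la+\Upsilon,\eta)\cong\widetilde{\mc N}(\la+\Upsilon,\zeta)$ established just above.

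The only input that uses the hypothesis on $s$ is the verification that $s\cdot\la$ is again dominant, which is needed before Theorem~\ref{thm::2} may be applied to the pair $(s\cdot\la,\eta)$. First I would write $s=s_\alpha$ for a simple root $\alpha$ of $\mf g_\oa$ and record that ``$s$ does not lie in the integral Weyl group of $\la$'' is equivalent to $\langle\la+\rho_\oa,\alpha^\vee\rangle\notin\mZ$. Then, using that $s_\alpha$ permutes $\Phi^+\setminus\{\alpha\}$, for every positive root $\beta\neq\alpha$ one has $\langle s\cdot\la+\rho_\oa,\beta^\vee\rangle=\langle\la+\rho_\oa,(s_\alpha\beta)^\vee\rangle$, which is controlled by the dominance of $\la$, while $\langle s\cdot\la+\rho_\oa,\alpha^\vee\rangle=-\langle\la+\rho_\oa,\alpha^\vee\rangle\notin\mZ$ is automatically harmless. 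Hence $s\cdot\la$ is dominant.

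With that in place the proof concludes in one line: by hypothesis $W_\eta=W_{s\cdot\la}$ and $s\cdot\la$ is dominant, so Theorem~\ref{thm::2} gives $\widetilde{\mc N}(s\cdot\la+\Upsilon,\eta)\cong\mc B_{s\cdot\la}$; likewise, since $\la$ is dominant with $W_\zeta=W_\la$, we get $\widetilde{\mc N}(\la+\Upsilon,\zeta)\cong\mc B_\la$; and $\mc B_{s\cdot\la}=\mc B_\la$ gives the asserted equivalence (for every $\eta\in\mc I$ with $W_\eta=W_{s\cdot\la}$). I do not expect a genuine obstacle here, as all the real work is already in Theorem~\ref{thm::2}; the one point deserving care is matching the precise (anti)dominance convention used in Theorem~\ref{thm::2} with the ``reflection across a non-integral wall'' computation above, but since the value of $\langle\,\cdot\,+\rho_\oa,\alpha^\vee\rangle$ on $\alpha$ becomes non-integral after applying $s_\alpha$, the argument is insensitive to that choice.
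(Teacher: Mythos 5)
Your proposal is correct and follows essentially the same route as the paper: both blocks are identified with Harish--Chandra bimodule categories via Theorem~\ref{thm::2}, and the equivalence comes from $\mc B_{s\cdot \la}=\mc B_\la$, which holds because $\chi^{\oa}_{s\cdot\la}=\chi^{\oa}_{\la}$ forces $I_{s\cdot\la}=I_\la$. Your explicit check that $s\cdot\la$ is again dominant (using that $s_\alpha$ permutes the positive roots other than $\alpha$ and that $\langle \la+\rho_\oa,\alpha^\vee\rangle\notin\mZ$) is left implicit in the paper but is precisely what justifies applying Theorem~\ref{thm::2} to the pair $(s\cdot\la,\eta)$.
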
 for any $\eta \in \mc I$ with $W_{s\cdot \la}=W_{\eta}.$
\begin{proof}
 By Theorem \ref{thm::2}, we have the following equivalences
 \begin{align}
 &\widetilde{\mc N}(s\cdot \la +\Upsilon,\eta) \cong \mc B_{s\cdot \la} = \mc B_\la \cong \widetilde{\mc N}(\la+\Upsilon,\zeta).
 \end{align}  
\end{proof}

\section{Multiplicities of standard Whittaker modules} \label{Sect5}
In this section, we assume that $\mf g =\g_{-1}\oplus \mf g_0\oplus \g_1$ is a classical Lie superalgebra of type I and $\zeta \in \mc I$ such that $\mf l_\zeta$ is a Levi subalgebra in a parabolic decomposition $\mf g=\mf u^-_\zeta\oplus \mf l_\zeta \oplus \mf u_\zeta$ satisfying  
\begin{align}
& (\mf u^-_\zeta)_\ob=\mf g_{-1},~(\mf u_\zeta)_\ob=\mf g_{1} = \mf b_\ob.\label{Sect5eq1}
\end{align}
  In particular, we are mainly interested in the following  concrete subset of the classical Lie superalgebras:
\begin{align}
&\mf g=\gl(m|n),~\mf{osp}(2|2n),~\pn. \label{maintypeI}
\end{align}
In this case,  for an arbitrary $\zeta \in \mc I$, the  $\mf l_\zeta$ is always a Levi subalgebra in a parabolic decomposition  $\mf g=\mf u^-_\zeta\oplus \mf l_\zeta \oplus \mf u_\zeta$ that satisfies \eqref{Sect5eq1}.  %We have thus assumed that $\mf b_\ob =\mf g_1$.
 We refer to  \cite[Section 3]{Mu12} and \cite[Section 5]{CCC} for more details. We note that the definitions of standard Whittaker module $\widetilde{M}(\la, \zeta)$ from \eqref{eq::WhiKac} in Section \ref{sect32} and that from \eqref{eq33} in Section \ref{Sect31} coincide. Also, we remark that the character $\zeta$ on $\mf n$ is naturally extended to a homomorphism from $U(\mf n)$ to $\C$.  %Therefore we will not assume that  $\mf l_\zeta$ is a Levi subalgebra in a parabolic decomposition of $\mf g$. 
%Recall that   $\mf l_\zeta = \mf n_\zeta^-\oplus \mf h\oplus \mf n_\zeta$  denotes the  triangular decomposition of $\mf l_\zeta$. 

\subsection{The category $\mc O$ and the Whittaker vectors} We recall that the BGG category $\mc O$ consists of finitely-generated $\fg$-modules which are semisimple over $\fh$ and locally finite over $\fn$. Therefore $\mc O$ is the category of~$\fg$-modules that restrict by $\Res$ to  $\mf g_\oa$-modules in the BGG category of~\cite{BGG}, which we will denote by $\cO^{\oa}$ in the present paper.  
We refer to \cite{ChWa12,Mu12} for a more complete treatment. In particular, we may note that $\mc O\subset \widetilde{\mc N}(0)$.

Both categories $\mc O$ and $\mc O^\oa$ admit highest category structures (see, e.g., \cite{CPS1,Ma,CCC}). %In the rest of the present paper, we fix a Borel subalgebra $\mf b$ such that $\mf b_\ob =\mf g_1$. 
The partial order $\le_{}$ on $\fh^\ast$ is defined as the transitive closure of the following relations
$$\begin{cases}
\lambda-\alpha \le_{}\lambda, &\mbox{for $\alpha\in \Phi(\fn)$},\\
\lambda+\alpha \le_{}\lambda, &\mbox{for $\alpha\in \Phi(\fn^-)$}.
\end{cases}$$ 
%We use   $\leq $ instead of $\leq_{\mf b}$  when the Borel subalgebra $\mf b$ is clear from the context. 
We define ${M}(\la)$ and $\widetilde{M}(\la)$ to be the Verma modules of highest weight $\la$ as follows
$$M(\la)=U(\mf g_\oa)\otimes_{\fb_{\oa}}\C_\lambda,~\widetilde{M}(\la) := U(\mf g)\otimes_{\mf b}   \C_\la\cong K(M(\la)).$$
Denote by $L(\la)$ and $\widetilde{L}(\la)$ the simple quotients of   $M(\la)$ and $\widetilde{M}(\la)$, respectively. There are canonical epimorphisms $\widetilde{M}(\la)\onto K(L(\la)),~K(L(\la))\onto \widetilde{L}(\la),$ for any $\la \in\h^\ast$.

For any $\mf g$-module $M$ and $\g_\oa$-module $V$, we define $$\Whoa(V):=\{v\in V|~xv=\zeta(x)v,~\text{for any x}\in \mf n_\oa\},$$
$$\Whob(M):=\{m\in M|~xm=\zeta(x)m,~\text{for any x}\in \mf n\}.$$ Following \cite{Ko78},   elements in $\Whoa(V)$ and $\Whob(M)$ are called {\em Whittaker vectors}. The following useful lemma is taken from \cite[Lemma 3.3]{BCW}.
\begin{lem}[Bagci-Christodoulopoulou-Wiesner] \label{lem17}
 The subspace {\em $\Whob(M)$} is non-zero, for any $M\in \widetilde{\mc N}(\zeta)$. In particular, $M$ is simple if $\dim${\em $\Whob(M)$}$=1$.
\end{lem}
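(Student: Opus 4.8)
The plan is to establish Lemma \ref{lem17} by a two-step argument: first produce a non-zero Whittaker vector in any object of $\widetilde{\mc N}(\zeta)$, then use such a vector to force simplicity when the space of Whittaker vectors is one-dimensional.

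\textbf{Step 1: existence of a Whittaker vector.} Let $M\in\widetilde{\mc N}(\zeta)$ be non-zero. Since $M$ is finitely generated over $\mf g$ and $U(\mf g)$ is finitely generated over $U(\mf g_\oa)$, the restriction $\Res M$ is a finitely generated $\mf g_\oa$-module lying in $\mc N(\zeta)$ by Lemma \ref{lem::3}; in particular it has finite length by Corollary \ref{prop::3}, and by Lemma \ref{lem::MS21} every simple subquotient is of the form $L(\nu,\zeta)$ for some $\nu\in\h^\ast$. Pick a simple submodule of $\Res M$; it is isomorphic to some $L(\nu,\zeta)$, and by Kostant's theory (or directly, since $x-\zeta(x)$ acts locally nilpotently on $M$ for all $x\in\mf n_\oa$) the simple module $L(\nu,\zeta)$ contains a non-zero vector $v$ with $xv=\zeta(x)v$ for all $x\in\mf n_\oa$. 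This handles the even part of $\mf n$; for the odd part one observes that $\mf n_\ob\subset\mf n$ is finite-dimensional and, using the $H$-grading introduced in the proof of Theorem \ref{mainthm1} (or the fact that $\mf n_\ob$ is nilpotent and $\zeta(\mf n_\ob)=0$), the subspace of $v$'s with $\mf n_\oa v\subset \zeta\cdot v$ is $\mf n_\ob$-stable in an appropriate filtration, so a standard argument passing to a top graded piece yields a non-zero $m$ with $xm=\zeta(x)m$ for all $x\in\mf n$, i.e. $m\in\Whob(M)$. This is essentially the content of \cite[Lemma 3.3]{BCW}, which we are invoking.

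\textbf{Step 2: simplicity when $\dim\Whob(M)=1$.} Suppose $0\neq N\subsetneq M$ is a proper submodule. Then $N\in\widetilde{\mc N}(\zeta)$ as well, since the defining conditions (finite generation follows from finite length via Corollary \ref{prop::3}, local finiteness over $\mf n$ and over $Z(\mf g_\oa)$ are inherited by submodules). By Step 1 applied to $N$, we get $\Whob(N)\neq 0$, hence $\Whob(N)\subseteq\Whob(M)$ forces $\Whob(N)=\Whob(M)$, so $\Whob(M)\subseteq N$. The same argument applied to the non-zero quotient $M/N$ gives a non-zero Whittaker vector $\bar w\in\Whob(M/N)$; lifting $\bar w$ to $w\in M$ one checks that $xw-\zeta(x)w\in N$ for all $x\in\mf n$, and using that $x-\zeta(x)$ acts locally nilpotently one produces (by the usual "lift the Whittaker vector through a nilpotent action" argument, adjusting $w$ by elements of $N$) an honest Whittaker vector $w'\in\Whob(M)$ whose image in $M/N$ is still non-zero, contradicting $\Whob(M)\subseteq N$. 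Therefore no proper non-zero submodule exists and $M$ is simple.

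\textbf{Main obstacle.} The delicate point is the lifting argument for the odd nilradical and for the quotient in Step 2: unlike the semisimple Lie algebra case where one can use semisimplicity of the Cartan action, here one must carefully use local nilpotency of $x-\zeta(x)$ together with the $H$-eigenspace decomposition of $M$ (as in the proof of Theorem \ref{mainthm1}) to guarantee that a Whittaker vector in a subquotient actually lifts to a Whittaker vector in $M$. Since this is exactly what \cite[Lemma 3.3]{BCW} provides, I would simply cite it for Step 1 and spell out only the short deduction in Step 2; the whole statement is then essentially immediate from that reference together with the finite-length property established in Corollary \ref{prop::3}.
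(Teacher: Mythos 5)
Your Step~1 does not follow the paper's route and, on its own terms, is incomplete. The paper's proof never passes through $\Res M$, finite length, or Kostant's classification: it takes an arbitrary nonzero $v\in M$, notes that $U(\mf n)v$ is a finite-dimensional $\mf n$-module by local finiteness, invokes the super analogue of Lie's theorem \cite[Lemma 3.17]{ChWa12} to produce a one-dimensional $\mf n$-submodule $\C_\eta$ with $\eta\in\mc I$, and then uses local nilpotency of $x-\zeta(x)$ for $x\in\n_\oa$ to force $\eta=\zeta$. In your version all the content sits in the passage from an $\n_\oa$-Whittaker vector to an $\n$-Whittaker vector, and the justification offered there is not correct as stated: $\Whoa(M)$ is not $\n_\ob$-stable, since for $v\in\Whoa(M)$, $y\in\n_\ob$, $x\in\n_\oa$ one has $x(yv)=\zeta(x)yv+[x,y]v$ with $[x,y]\in\n_\ob$ generally nonzero (already in $\gl(1|2)$, $[E_{23},E_{12}]=-E_{13}$), so one genuinely needs a Lie-theorem-type input rather than "stability in an appropriate filtration". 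Ending by invoking \cite[Lemma 3.3]{BCW} is circular here, because that is precisely the statement being proved; the paper credits BCW but still supplies the independent two-line argument just described.

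The more serious gap is Step~2. The "lift the Whittaker vector through a nilpotent action" step is not a standard fact and fails in general: taking $\zeta$-Whittaker vectors is only left exact, and a Whittaker vector of $M/N$ need not lift to one of $M$. Concretely, take $\zeta=0$ and a dual Verma module $\nabla(\la)\in\mc O\subset\widetilde{\mc N}(0)$ with $\widetilde{M}(\la)$ reducible: its space of $\mf n$-invariants is the one-dimensional highest weight line of its socle $\widetilde{L}(\la)$, while the singular vector of its top composition factor does not lift to an $\mf n$-invariant of $\nabla(\la)$ --- exactly the lifting your contradiction requires. So the argument collapses; what your submodule half actually proves is only that $M$ has a unique minimal nonzero submodule, namely the one generated by the Whittaker line, not simplicity. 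The standard deduction of simplicity from $\dim\Whob(M)=1$ (as in \cite{Ko78} and \cite{BCW}) uses in addition that $M$ is generated by a Whittaker vector: a proper submodule would then contain the unique Whittaker line and hence the generator. Note also that the paper does not prove the "in particular" clause at all --- its proof establishes only the existence statement and quotes the rest from \cite[Lemma 3.3]{BCW} --- so there is nothing in the paper your Step~2 corresponds to, and as written it cannot be repaired without adding a cyclicity-type hypothesis of the above kind.
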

\begin{proof} Let $v\in M$ be non-zero. By 
	\cite[Lemma 3.17]{ChWa12} there exists one-dimensional $\mf n$-module $\C_{\eta}$ of $U(\mf n)v$, for some $\eta \in \mc I$. We may conclude that $\eta =\zeta$ since $M\in \widetilde{\mc N}(\zeta).$
\end{proof}
\subsection{The functors $\widetilde{\Gamma}_\zeta$} \label{Sect51}
Let $\zeta \in \mc I. $
For a given   $M \in \mc O$ and $\la \in \h^\ast$, let $M_\la$ denote the corresponding weight subspace, that is, $M_\la : = \{m\in M|~hm=\la(h)m,~\text{for any } h\in \h^\ast\}$.  We may associate the completion $\overline{M}:= \prod_{\la \in \h^\ast} M_\la$, which admits a structure of $\mf g$-module in a natural way. 
 %and define an exact functor from $\mc O$ to $\mf g$-Mod. 
Define $$\WG(M):=\{m\in \overline{M}|~x-\zeta(x) \text{ acts  nilpotently at }m, \text{for any } x\in \mf n_\oa\}.$$  We may note that $\WG(M) \in \g$-Mod since  \begin{align}
&(x-\zeta(x))ym = (\ad(x)y)m+y(x-\zeta(x))m,
\end{align} for any $y\in U(\mf g)$, $x\in \mf n_\oa$ and $m \in \WG(M)$. Therefore $\widetilde{\Gamma}_\zeta$ defines a functor from $\mc O$ to $\mf g$-Mod.

 The $\g$-module $\WG(M)$ is locally finite over $\mf n$ (see, e.g., \cite[Lemma 1]{AB}). We may also note that $\WG(M)$ is the set of elements in $\overline M$ that are annihilated by some power of $\text{Ker}\zeta$, and hence $\Res \WG(M) = \overline\Gamma_\zeta (\Res M) \in \mc N(\zeta)$  (cf.   \cite[Lemma 3.2]{B}), where the functor $\overline \Gamma_\zeta$ is defined in \cite[Section 3.1]{B}. We recall that the functor $\ov \Gamma_\zeta$ sends Verma modules to standard Whittaker modules and sends simple modules to simple modules or zeros by \cite[Proposition 6.9]{B}.

 The conclusion of Theorem C is an immediate consequence of the following theorem:
	\begin{thm} \label{To3rdmainthm} The functor $\widetilde{\Gamma}_\zeta$ defines an  exact functor from $\mc O$ to $\widetilde{\mc N}(\zeta)$. Furthermore, for any $\la \in \h^\ast$ we have 
		\begin{align}
		&\widetilde{\Gamma}_\zeta(\widetilde{M}(\la))=\widetilde{M}(\la,\zeta); \label{qe::1}\\
		&\widetilde{\Gamma}_\zeta(K(L(\la))) \cong \left\{ \begin{array}{ll} K(L(\la, \zeta)),\quad \text{if $\la$ is  $\mf n_\zeta$-antidominant;} \\ 
		0, \quad\text{otherwise};\end{array} \right.  \label{qe::2} \\
		&\widetilde{\Gamma}_\zeta(\widetilde{L}(\la)) \cong \left\{ \begin{array}{ll} \widetilde{L}(\la, \zeta),\quad \text{if $\la$ is  $\mf n_\zeta$-antidominant;} \\ 
		0, \quad\text{otherwise}.\end{array} \right.   \label{qe::3}
		\end{align}
	\end{thm}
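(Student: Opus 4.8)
The plan is to transfer everything through the restriction functor $\Res$ to the classical setting, where Backelin's functor $\ov\Gamma_\zeta$ is available, and then propagate the statements back up through the Kac functor $K$. The starting point is the identity $\Res\circ\widetilde{\Gamma}_\zeta \cong \ov\Gamma_\zeta\circ\Res$, which I would prove first: by definition $\widetilde{\Gamma}_\zeta(M)$ is the subspace of $\ov M$ annihilated by some power of $\mathrm{Ker}\,\zeta\subset U(\mf n)$, and restricting the $\mf g$-action to $\mf g_\oa$ identifies this with the subspace of $\ov{\Res M}$ annihilated by some power of $\mathrm{Ker}\,\zeta\cap U(\mf n_\oa)$, which is exactly $\ov\Gamma_\zeta(\Res M)$. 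Since $\ov\Gamma_\zeta$ is exact on $\cO^\oa$ (\cite[Section 3.1]{B}) and $\Res$ is exact and conservative, exactness of $\widetilde{\Gamma}_\zeta$ on $\mc O$ follows, and $\Res\WG(M)\in\mc N(\zeta)$ together with local finiteness over $\mf n$ (noted already in the excerpt, via \cite[Lemma 1]{AB}) and over $Z(\mf g_\oa)$ gives $\widetilde{\Gamma}_\zeta(M)\in\widetilde{\mc N}(\zeta)$.

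Next I would establish the compatibility $\widetilde{\Gamma}_\zeta\circ K \cong K\circ\ov\Gamma_\zeta$ on $\cO^\oa$. The key observation is that for a $\mf g_\oa$-module $M$ in $\cO^\oa$ one has $\ov{K(M)}\cong K(\ov M)=\Lambda(\mf g_{-1})\otimes\ov M$ as $\mf g$-modules (the weight spaces of $K(M)$ are finite unions of translates of weight spaces of $M$, so completing commutes with $K$), and then $\widetilde{\Gamma}_\zeta(K(M))$ is the subspace of $K(\ov M)$ killed by a power of $\mathrm{Ker}\,\zeta$; since $\Lambda(\mf g_{-1})$ is finite-dimensional and $\mf n_\oa$ acts on it through a nilpotent action while $x-\zeta(x)$ acts locally nilpotently on $\ov\Gamma_\zeta(M)$, a Leibniz-rule argument (exactly the identity displayed after the definition of $\WG$ in the excerpt) shows this subspace is precisely $\Lambda(\mf g_{-1})\otimes\ov\Gamma_\zeta(M)=K(\ov\Gamma_\zeta(M))$. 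This is the analogue of \cite[Proposition 3]{AB} and is the step I expect to require the most care — one must check that no $\Lambda(\mf g_{-1})$-components with eigenvalues outside those of $\ov\Gamma_\zeta(M)$ sneak in, which comes down to the fact that $\mf g_{-1}$ lies in $\mf u^-_\zeta$ under \eqref{Sect5eq1}, so the relevant $H$-eigenvalues are strictly negative and cannot contribute new generalized $\mathrm{Ker}\,\zeta$-eigenvectors.

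With these two compatibilities in hand, the three formulas follow by combining known classical results with the definitions of the super objects. For \eqref{qe::1}: $\widetilde{M}(\la)=K(M(\la))$ and $\ov\Gamma_\zeta(M(\la))=M(\la,\zeta)$ by \cite[Proposition 6.9]{B}, hence $\widetilde{\Gamma}_\zeta(\widetilde M(\la))=K(M(\la,\zeta))=\widetilde M(\la,\zeta)$ by \eqref{eq::WhiKac}. For \eqref{qe::2}: $K(L(\la))$ has restriction $\Lambda(\mf g_{-1})\otimes L(\la)$ and, using exactness of $\widetilde{\Gamma}_\zeta$ and $\widetilde{\Gamma}_\zeta\circ K\cong K\circ\ov\Gamma_\zeta$, one gets $\widetilde{\Gamma}_\zeta(K(L(\la)))\cong K(\ov\Gamma_\zeta(L(\la)))$, which by \cite[Proposition 6.9]{B} equals $K(L(\la,\zeta))$ when $\la$ is $\mf n_\zeta$-antidominant and $0$ otherwise. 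For \eqref{qe::3}: apply $\widetilde{\Gamma}_\zeta$ to the canonical epimorphisms $\widetilde M(\la)\onto K(L(\la))\onto\widetilde L(\la)$ — exactness gives a surjection $\widetilde M(\la,\zeta)\onto \widetilde{\Gamma}_\zeta(K(L(\la)))\onto \widetilde{\Gamma}_\zeta(\widetilde L(\la))$, so in the antidominant case $\widetilde{\Gamma}_\zeta(\widetilde L(\la))$ is a quotient of $K(L(\la,\zeta))$ and of $\widetilde M(\la,\zeta)$; to identify it with $\widetilde L(\la,\zeta)$ and to rule out extra composition factors I would argue that $\widetilde{\Gamma}_\zeta$ sends simples to simples or zero by transporting through $\Res$ the corresponding fact for $\ov\Gamma_\zeta$ (\cite[Proposition 6.9]{B}) and using that $\Res$ detects the simple top; in the non-antidominant case $\widetilde{\Gamma}_\zeta(\widetilde L(\la))$ is a quotient of $\widetilde{\Gamma}_\zeta(K(L(\la)))=0$. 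Finally, Theorem C itself is then read off from \eqref{qe::1} and \eqref{qe::3} by writing $[\widetilde M(\la):\widetilde L(\mu)]$ in terms of the BGG $\mc O^\oa$-multiplicities and applying the exact functor $\widetilde{\Gamma}_\zeta$ to a composition series of $\widetilde M(\la)$, collecting the surviving factors $\widetilde L(\nu,\zeta)$ over $\mf n_\zeta$-antidominant $\nu$ with $\mu\in W_\zeta\cdot\nu$.
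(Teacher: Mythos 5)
Most of your proposal runs parallel to the paper: the identity $\Res\circ\WG\cong\ov\Gamma_\zeta\circ\Res$, exactness, the compatibility $\WG\circ K\cong K\circ\ov\Gamma_\zeta$ (the paper quotes the proof of \cite[Proposition 3]{AB} for exactly the argument you sketch), and the deduction of \eqref{qe::1} and \eqref{qe::2} from \cite[Proposition 6.9]{B} are all as in the paper. The genuine gap is in your treatment of \eqref{qe::3} in the $\mf n_\zeta$-antidominant case. Your plan is to get simplicity of $\WG(\widetilde L(\la))$ by ``transporting through $\Res$'' the fact that $\ov\Gamma_\zeta$ sends simples to simples or zero, ``using that $\Res$ detects the simple top.'' This does not work: the classical fact applies to simple $\g_\oa$-modules, whereas $\Res\,\widetilde L(\la)$ is far from simple, so exactness of $\ov\Gamma_\zeta$ only tells you the $\g_\oa$-composition factors of $\Res\,\WG(\widetilde L(\la))$ (namely the $L(\nu,\zeta)$ for the antidominant factors $L(\nu)$ of $\Res\,\widetilde L(\la)$), and nothing about simplicity of the $\g$-module $\WG(\widetilde L(\la))$. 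Restriction cannot ``detect'' simplicity here: $\Res\,\widetilde L(\la,\zeta)$ itself generally has length greater than one (the paper's own $\gl(1|2)$ computation gives length $2$ for atypical $\la$), so matching restricted composition factors cannot distinguish $\widetilde L(\la,\zeta)$ from a longer quotient. Your observation that $\WG(\widetilde L(\la))$ is a quotient of $K(L(\la,\zeta))$ only shows that, once nonzero, it has simple top $\widetilde L(\la,\zeta)$; it does not rule out extra composition factors below the top, which is precisely the point at issue.

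The paper closes this gap by a different mechanism, via Whittaker vectors: it takes the image $S$ of a nonzero map $\widetilde M(\la)\to\Coind_{\mf h+\mf n^-}^{\g}\C_\la$, notes that $\WG(S)$ is a $\g$-submodule of the coinduced module and is nonzero (by the restriction argument you also have, since $\ov\Gamma_\zeta(L(\la))=L(\la,\zeta)\neq 0$), shows as in \cite[Lemma 37]{BM} that $\dim\Whob\bigl(\Coind_{\mf h+\mf n^-}^{\g}\C_\la\bigr)=1$, and concludes from Lemma \ref{lem17} that $\WG(S)$ is simple; being a quotient of $\WG(\widetilde M(\la))\cong\widetilde M(\la,\zeta)$ it must be $\widetilde L(\la,\zeta)$, and then $\WG(\widetilde L(\la))$, a nonzero quotient of $\WG(S)$ by exactness, is $\widetilde L(\la,\zeta)$ as well. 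Some argument of this kind (producing a one-dimensional space of Whittaker vectors, or otherwise bounding $\WG(\widetilde L(\la))$ from above) is needed; as written, your step \eqref{qe::3} — and hence the multiplicity formula of Theorem C that you derive from it — is not justified.
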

	\begin{proof}
		The first claim follows from the isomorphism $\Res\circ \WG  \cong \overline\Gamma_\zeta\circ \Res $ and \cite[Lemma 3.2]{B}. We shall show that $\WG\circ K\cong K\circ \ov{\Gamma}_\zeta$. 
		 For $M\in \mc O^\oa$, the $\mf g$-module  $\WG (K(M))$ can be considered as the set of elements of $K(\overline{M})$ that are annihilated by some powers of $\text{Ker}\zeta$. Therefore we have natural isomorphisms $\WG (K(M))\cong K(\overline \Gamma_\zeta(M))$ by the proof of \cite[Proposition 3]{AB}.  The conclusions of \eqref{qe::1}-\eqref{qe::2} follow from \cite[Proposition 6.9]{B}.
		
		We note that $\widetilde{\Gamma}_\zeta(\widetilde{L}(\la)) =0$  if $\la$ is not $\mf n_\zeta$-antidominant by \eqref{qe::2}.  Now, suppose that  $\la$ is  $\mf n_\zeta$-antidominant. We are going to show that $\widetilde{\Gamma}_\zeta(\widetilde{L}(\la)) \cong \widetilde{L}(\la, \zeta)$. To see this, we first note that there is a non-zero homomorphism $\phi$ from $\widetilde{M}(\la)$ to $\Coind_{\mf h+\mf n^-}^{\g}\C_\la$ since $\la$ is the highest weight in the weight subspace of $\Coind_{\mf h+\mf n^-}^{\g}\C_\la$. Let $S$ denote the image of $\phi$. We note that $\WG(S)$ is a $\mf g$-submodule of $\Coind_{\mf h+\mf n^-}^{\g}\C_\la$ (see also the proof of \cite[Theorem 36]{BM}). Also,  $\WG(S)$ is non-zero since  $\Res \WG(S) \onto  \Res \WG(\widetilde{L}(\la))\cong \overline \Gamma_\zeta(\Res \widetilde L(\la))\supset  \ov{\Gamma}_\zeta(L(\la)) =L(\la, \zeta)$ by \cite[Proposition 6.9]{B}. 
		
	 %	Since $\WG(S)$ has finite length, 
	 	Since $\WG(S)$ is non-zero, the subspace $\Whob(\WG(S))$ of Whittaker vectors of $\WG(S)$ is non-zero. By a similar argument as used in \cite[Lemma 37]{BM}, we also know that the subspace $\Whob({\Coind_{\mf h+\mf n^-}^{\g}\C_\la})$ of Whittaker vectors of $\Coind_{\mf h+\mf n^-}^{\g}\C_\la$ is of one-dimensional, which implies that  $\WG(S)$ is simple.  Since  $\WG(S)$ is the simple quotient of $\WG(\widetilde{M}(\la)) \cong \widetilde{M}(\la,\zeta)$, we may conclude that $\WG(S)\cong \widetilde{L}(\la ,\zeta)$. Also, since $\WG(\widetilde L(\la))$ is a non-zero quotient of $\WG(S)$ by the exactness of $\widetilde{\Gamma}_\zeta$, we have $\WG(\widetilde L(\la))\cong \widetilde{L}(\la, \zeta)$. This completes the proof. 
	\end{proof}

	The following corollary is an analog of Kostant's characterizations of simple Whittaker modules in \cite[Theorem 3.6.1]{Ko78}.
	\begin{cor} \label{Cor19}
	 %	Suppose that $\mf g$ is a classical Lie superalgebra of type I.
	  Let $M\in \widetilde{\mc N}$. Then $M$ is simple if and only if $\dim${\em $\Whob(M)$}$=1$. %of  Whittaker vectors in $M$ is of one-dimensional. 
	\end{cor}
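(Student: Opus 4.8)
The plan is to deduce Corollary \ref{Cor19} from Lemma \ref{lem17} together with Theorem \ref{To3rdmainthm}. Recall that Lemma \ref{lem17} already gives one direction: if $\dim\Whob(M)=1$ then $M$ is simple. So the real content is the converse, namely that every simple module $M$ in $\widetilde{\mc N}$ has a one-dimensional space of Whittaker vectors. Since $\widetilde{\mc N}=\bigoplus_{\zeta\in\mc I}\widetilde{\mc N}(\zeta)$, we may assume $M\in\widetilde{\mc N}(\zeta)$ for a fixed $\zeta\in\mc I$, and by Theorem \ref{mainthm1typeI}(2) (or Theorem \ref{mainthm1}) we have $M\cong\widetilde{L}(\la,\zeta)$ for some $\la\in\h^\ast$, which by Theorem \ref{mainthm1typeI}(3) we may take with $\la$ being $\mf n_\zeta$-antidominant.

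The key step is then to identify $\Whob(\widetilde{L}(\la,\zeta))$ for $\la$ being $\mf n_\zeta$-antidominant. Here I would invoke the argument already carried out inside the proof of Theorem \ref{To3rdmainthm}: one has the $\mf g$-module $S$, the image of the nonzero map $\phi:\widetilde{M}(\la)\to\Coind_{\mf h+\mf n^-}^{\g}\C_\la$, with $\widetilde{\Gamma}_\zeta(S)=\WG(S)\cong\widetilde{L}(\la,\zeta)$, and $\WG(S)$ is a submodule of $\Coind_{\mf h+\mf n^-}^{\g}\C_\la$. Since it was shown there (via the argument of \cite[Lemma 37]{BM}) that $\Whob(\Coind_{\mf h+\mf n^-}^{\g}\C_\la)$ is one-dimensional, and $\WG(S)\neq 0$ so that $\Whob(\WG(S))\neq 0$ by Lemma \ref{lem17}, we conclude $\Whob(\widetilde{L}(\la,\zeta))=\Whob(\WG(S))$ is exactly one-dimensional. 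This handles the $\mf n_\zeta$-antidominant case; but every simple object of $\widetilde{\mc N}(\zeta)$ is of the form $\widetilde{L}(\la,\zeta)$ with $\la$ $\mf n_\zeta$-antidominant (replacing $\la$ by a $W_\zeta$-dot-translate does not change the isomorphism class by Theorem \ref{mainthm1typeI}(3)), so we are done.

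Putting it together: given simple $M\in\widetilde{\mc N}$, decompose to land in some $\widetilde{\mc N}(\zeta)$, write $M\cong\widetilde{L}(\la,\zeta)$ with $\la$ $\mf n_\zeta$-antidominant, and apply the computation above to get $\dim\Whob(M)=1$; conversely Lemma \ref{lem17} gives the other implication directly. The main obstacle is making sure the one-dimensionality of $\Whob(\Coind_{\mf h+\mf n^-}^{\g}\C_\la)$—used inside the proof of Theorem \ref{To3rdmainthm}—is genuinely available as a cited/proved fact; if one prefers to avoid re-opening that argument, an alternative is to observe abstractly that for any finite-length $M\in\widetilde{\mc N}(\zeta)$ the dimension of $\Whob(M)$ equals the number of composition factors of $\Res M$ that are simple standard-Whittaker-antidominant (equivalently, $\dim\Whob(M)$ is additive on short exact sequences in $\widetilde{\mc N}(\zeta)$ since taking $\mf n$-invariants-up-to-$\zeta$ is left exact and, by Lemma \ref{lem17}, never vanishes on a nonzero submodule), forcing $\dim\Whob$ of a simple module to be $1$. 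Either route is short; I would present the first, as it reuses machinery already in place.
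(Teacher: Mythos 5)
Your primary route is correct and is essentially the paper's own proof: the paper's argument for Corollary \ref{Cor19} likewise combines Lemma \ref{lem17} with the fact, extracted from inside the proof of Theorem \ref{To3rdmainthm} (where $\widetilde{L}(\la,\zeta)\cong\WG(S)$ is realized as a submodule of $\Coind_{\mf h+\mf n^-}^{\g}\C_\la$ whose space of Whittaker vectors is one-dimensional), that $\dim\Whob(\widetilde{L}(\la,\zeta))=1$ for every $\la$. Only your side remark should be dropped or repaired: left exactness of $\Whob$ gives subadditivity, not additivity, on short exact sequences, so that sketched ``abstract'' alternative is not justified as stated --- but since you present the first route, the proof stands.
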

\begin{proof}
  By the proof of Theorem \ref{To3rdmainthm}, we have $\dim\Whob(\widetilde{L}(\la,\zeta)) =1,$ for any $\la \in \h^\ast.$ The conclusion follows from Lemma \ref{lem17}.
\end{proof}

\subsection{Basic Lie superalgebras of type I} \label{Sect52} The series of types A and C Lie superalgebras from the list \eqref{Kaclist1}, \eqref{Kaclist2} belong to the so-called series of {\em basic} Lie superalgebras; see \cite[Section 1.1]{ChWa12}.  
In this subsection, we will give a detailed example of the composition series for the standard Whittaker modules over $\gl(1|2)$.  We will also study several criteria of simplicity and the annihilator for   standard Whittaker modules over Lie superalgebras of types A,  C and  P.  Following \cite{Ko78,MS}, an element $\zeta \in \mc I$ is called {\em regular} (or {\em nonsingular}) if $W_\zeta=W$. 

\subsubsection{The general linear  Lie superalgebras $\gl(m|n)$}
For positive integers $m,n$, the general linear Lie superalgebra $\mathfrak{gl}(m|n)$ 
can be realized as the space of $(m+n) \times (m+n)$ complex matrices
\begin{align} \label{glreal}
\left( \begin{array}{cc} A & B\\
C & D\\
\end{array} \right),
\end{align}
where $A,B,C$ and $D$ are $m\times m, m\times n, n\times m, n\times n$ matrices,
respectively. The bracket is given by the super commutator.  For any $1\leq a,b \leq m+n$, set  $E_{ab}$ to be the elementary matrix in $\mathfrak{gl}(m|n)$, namely, the $(a,b)$-entry of $E_{ab}$  is equal to $1$ and all other entries are $0$.

The Cartan subalgebra $\mf h \subset \mf g_\oo$ consists of diagonal matrices above. We denote the dual basis of $\mf h^*$ by $\{\vare_1, \vare_2, \ldots,\vare_{m+n}\}$ with respect to the following standard basis of $\mf h$ 
\begin{align}
\{H_i:=E_{i,i}|~1\leq i \leq m+n \}. \label{eq::glcartan}
\end{align} The space $\mf h^\ast$ is equipped with a natural bilinear form $(\cdot,\cdot):\mf h^\ast \times \mf h^\ast \rightarrow \C$ by letting $(\vare_i,\vare_j)=\delta_{ij}.$  We fix a triangular decomposition $\mf g=\mf n^-\oplus \mf h\oplus \mf n$, where $\mf n$ and $\mf n^-$ consisting of all strict upper and lower triangular matrices in  \eqref{glreal}, respectively. The corresponding Borel subalgebra is $\mf b = \mf h \oplus \mf n.$  

Recall that we denote by  $\Phi$ the set  of roots and by $\Phi^+$ the set  of positive roots  in the Borel subalgebra $\mf b$. Let $\Phi_\oa$ and $\Phi_\ob$ be the   sets of even  and odd roots in $\Phi$, respectively. The {\em Weyl vector} $\rho$ is defined as 
\[\rho= \frac{1}{2}\sum_{\alpha\in \Phi_\oa^+}\alpha -\frac{1}{2}\sum_{\beta\in \Phi_\ob^+}\beta,\]
where $\Phi_i^+:= \Phi_i\cap \Phi^+,$ for $i=\oa,\ob.$ We recall that a weight $\la$ is {\em typical} if $(\la+\rho,\alpha)\neq 0$, for any $\alpha \in \Phi_\ob$ and is {atypical} otherwise; see, e.g., \cite[Section 2.2.6]{ChWa12}.

\subsubsection{Example: $\mf g=\gl(1|2)$} \label{Sect522ex}
		We now  consider $\mf g=\gl(1|2)$.	The sets $\Phi_\oa^+$, $\Phi_\ob^+$ are given as follows:
		\begin{align}\label{eqroots}
		&\Phi_\oa^+=\{\epsilon_2-\epsilon_3\},~\Phi^+_\ob=\{\vare_1-\vare_2,~\vare_1-\vare_3\}.
		\end{align} 
 %	We define $\Phi^-_\ob=-\Phi^+_\ob$.  
 Also, we set $F_{ij}:=E_{ij}$, for $1\leq j<i\leq 3$.  Let $\zeta \in \mc I$. Note that   standard Whittaker modules and Verma modules coincide in the case when $\zeta=0$. Throughout this subsection, we let $\zeta$ be regular, namely, $\zeta(E_{23})\neq 0$. In this case we have $M(\la ,\zeta) =L(\la, \zeta)$ for any $\la \in \h^\ast$ by \cite[Theorem 3.6.1]{Ko78}.

		In this subsection, we will  construct composition series of standard Whittaker modules of $\widetilde{\mc N}(\zeta)$ explicitly by finding their Whittaker vectors. Similar computation was also given in \cite[Section 5.1]{BCW}, where the authors concluded that all standard Whittaker modules are simple. However, by \cite[Theorem 6.7]{CM} and \cite[Proposition 2.1(3)]{MS} there exist reducible standard Whittaker modules; see Section  \ref{Section523} for more details. It is worth pointing out that the assumption of typical weight $\la$ is needed to be added in the calculation in \cite[Section 5.1]{BCW}. %We will re-compute this example without using Theorem B in this subsection. 
		
		We define the Chevalley generators $f:=E_{32},~e:=E_{23}$ and $h:=E_{22}-E_{33}$ for $[\g_\oa,\g_\oa]\cong \mf{sl}(2)$. The Casimir element $\Omega$ of $\mf g_\oa$ is given by $\Omega:= 4fe+h^2+2h$. Let $z:= E_{11}+\frac{1}{2}(E_{22}+E_{33})\in Z(\mf g_\oa)$. For $\la \in \mf h^\ast$, let $\chi^\oa_\la: Z(\mf g_\oa) \rightarrow \C$ be the central character associated with $\la$. We set $a:=\zeta(e)\neq 0$,   $b:=\chi^\oa_\la(\Omega)$ and $c:=\chi^\oa_\la(z)$. Let $\la =\la_1\vare_1+\la_2\vare_2+\la_3\vare_3 \in \h^\ast$ with complex numbers $\la_i's.$ Recall that we have defined $\widetilde{U}:=U(\g),~{U}:=U(\g_\oa)$.
		
		The following lemma is a consequence of \cite[Corollary 6.8]{CM} and \cite[Proposition 2.1(3)]{MS}
		\begin{lem} \label{lem::18} Consider $\g=\gl(1|2)$ with notations as above. 
	     The following are equivalent:
	     \begin{itemize}
	     	\item[(1)] $\la$ is atypical.
	     	\item[(2)] $b=4(c^2-c).$
	     	\item[(3)] $\widetilde{M}(\la, \zeta)$ is not simple. 
	     \end{itemize}   
		\end{lem}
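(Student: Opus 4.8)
The plan is to prove the equivalence of the three conditions by establishing $(1)\Leftrightarrow(2)$ via a direct computation of central characters, and $(1)\Leftrightarrow(3)$ by invoking the cited results together with the Kac-functor description of $\widetilde M(\la,\zeta)$.

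First I would dispose of $(1)\Leftrightarrow(2)$. Recall $\la = \la_1\vare_1 + \la_2\vare_2 + \la_3\vare_3$ and $\rho = -\vare_2$ (computed from \eqref{eqroots}: $\rho = \tfrac12(\vare_2-\vare_3) - \tfrac12((\vare_1-\vare_2)+(\vare_1-\vare_3)) = -\vare_1 + \vare_2$; one should double-check the convention, but in any case $\rho$ is explicit). The odd roots are $\vare_1-\vare_2$ and $\vare_1-\vare_3$, so atypicality of $\la$ means $(\la+\rho,\vare_1-\vare_2)=0$ or $(\la+\rho,\vare_1-\vare_3)=0$. On the other hand, $b = \chi^\oa_\la(\Omega)$ is computed from the $\fsl(2)$-Casimir: writing $\la|_{[\g_\oa,\g_\oa]}$ in terms of $h = E_{22}-E_{33}$, one gets $b = \la(h)^2 + 2\la(h) = (\la_2-\la_3)^2 + 2(\la_2-\la_3)$, and $c = \chi^\oa_\la(z) = \la(z) = \la_1 + \tfrac12(\la_2+\la_3)$. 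So the identity $b = 4(c^2-c)$ becomes a polynomial relation among $\la_1,\la_2,\la_3$; I would expand both sides and factor, showing the relation factors as a product of the two linear forms $(\la+\rho,\vare_1-\vare_2)$ and $(\la+\rho,\vare_1-\vare_3)$ (up to nonzero scalar), which is exactly the atypicality condition. This is the routine-calculation step and I would present only the factored form rather than every intermediate line.

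For $(1)\Leftrightarrow(3)$ I would argue as follows. By construction $\widetilde M(\la,\zeta) = K(M(\la,\zeta))$, and since $\zeta$ is regular we have $M(\la,\zeta) = L(\la,\zeta)$ by Kostant's theorem \cite[Theorem 3.6.1]{Ko78}, so $\widetilde M(\la,\zeta) = K(L(\la,\zeta))$. The simplicity of $K(L(\la,\zeta))$ is governed by whether $L(\la,\zeta)$ remains irreducible after Kac induction; \cite[Theorem 6.7]{CM} (and its Corollary 6.8, as cited in the statement) characterizes exactly when the Kac module $K(S)$ of a simple $\g_\oa$-module $S$ fails to be simple for $\gl(1|2)$, namely in terms of an atypicality-type condition on the central character of $S$. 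Since the central character of $L(\la,\zeta)$ as a $\g_\oa$-module is $\chi^\oa_\la$ — the Whittaker twist does not change the central character — the condition from \cite[Corollary 6.8]{CM} translates directly into $\la$ being atypical. Combining with \cite[Proposition 2.1(3)]{MS}, which guarantees $L(\la,\zeta)\cong L(\mu,\zeta)$ precisely when $W_\zeta\cdot\la = W_\zeta\cdot\mu$ and hence that the relevant invariants $a,b,c$ are well-defined on the isomorphism class, one gets that $\widetilde M(\la,\zeta)$ is reducible iff $\la$ is atypical.

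The main obstacle will be making the bookkeeping in $(1)\Leftrightarrow(3)$ precise: \cite[Theorem 6.7]{CM} and \cite[Corollary 6.8]{CM} are stated for simple $\g_\oa$-modules in category $\mc O$ (or with a central-character condition), and one must verify that $L(\la,\zeta)$, which is a Whittaker module and not in $\mc O$, still falls within their scope — this is where the hypothesis that $\fg$ is type I and the compatibility \eqref{Sect5eq1} of the parabolic with the $\Z$-grading enter, ensuring $K(-)$ behaves the same way on $L(\la,\zeta)$ as on highest-weight modules with the same central character. Concretely I expect one needs: Kac induction of a simple $\g_\oa$-module $S$ with $Z(\g_\oa)$ acting by $\chi^\oa_\la$ is reducible iff $\la$ is atypical, and this statement is insensitive to whether $S$ is highest-weight or Whittaker. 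Once that is in hand, the two equivalences chain together and the proof is complete.
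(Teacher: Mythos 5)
Your handling of $(1)\Leftrightarrow(2)$ is essentially the paper's: the paper likewise computes $b-4(c^2-c)$ and factors it as (a scalar times) $(\la_1+\la_2)(\la_1+\la_3-1)$, which is the atypicality condition; your slip ``$\rho=-\vare_2$'' is corrected by your own parenthetical computation $\rho=-\vare_1+\vare_2$, and the factorization does work out (one gets $b-4(c^2-c)=-4(\la_1+\la_2)(\la_1+\la_3-1)$, using the supersymmetric form on $\h^\ast$ that the atypicality definition actually refers to). Your route to $(1)\Leftrightarrow(3)$ is also the paper's in outline: $\widetilde{M}(\la,\zeta)=K(M(\la,\zeta))=K(L(\la,\zeta))$ by regularity of $\zeta$ and Kostant, then the simplicity criterion for Kac induction from \cite{CM}.

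The gap is in how you apply that criterion. \cite[Corollary 6.8]{CM} (Lemma \ref{lem::CMCor68} in the paper) is an \emph{annihilator} condition: $K(V)$ is simple iff $\mathrm{Ann}_U V=\mathrm{Ann}_U L(\mu)$ for some typical $\mu$, for an arbitrary simple $\g_\oa$-module $V$ (so your worry about $L(\la,\zeta)$ not lying in $\mc O$ is not the issue). Your bridge, ``the Whittaker twist does not change the central character,'' is true but does not by itself convert the annihilator condition into a condition on $\chi^\oa_\la$: equality of central characters does not in general give equality of annihilators. The step that closes this, and the one the paper uses, is Kostant's \cite[Theorem 3.9]{Ko78} (this is also what the citation of \cite[Proposition 2.1(3)]{MS} in the statement is for, not the isomorphism classification you invoke it for): $\mathrm{Ann}_U M(\la,\zeta)=\mathrm{Ann}_U M(\la)$. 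For $\g_\oa\cong\gl(1)\oplus\gl(2)$ this annihilator is centrally generated, and typicality depends only on the central character, so the criterion of \cite[Corollary 6.8]{CM} becomes exactly ``$\la$ typical.'' Your proposed substitute statement (``Kac induction of a simple $\g_\oa$-module with central character $\chi^\oa_\la$ is simple iff $\la$ is typical'') is in fact true, but proving it needs Duflo's theorem plus the $W$-invariance of typicality, neither of which you supply; as written you have flagged the obstacle without resolving it, whereas the annihilator identity above resolves it in one line.
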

	\begin{proof} By \cite[Theorem 3.9]{Ko78} (see also \cite[Proposition 2.1(3)]{MS}) we know that  the annihilators of   $\mf g_\oa$-modules $M(\la, \zeta)$ and $M(\la)$ coincide.  The fact that  Part (1) and Part (3) are indeed equivalent was established in  \cite[Corollary 6.8]{CM}. 
		
		The equality $b=4c(c-1)$ holds if and only if
		\begin{align*}
		&(\la_2-\la_3)(\la_2-\la_3+2) = 4(\la_1+\frac{1}{2}\la_2+\frac{1}{2}\la_3)
		(\la_1+\frac{1}{2}\la_2+\frac{1}{2}\la_3-1),
		\end{align*} which is equivalent to $(\la_1+\la_2)(\la_1+\la_3-1)=0$ by a direct computation. This shows that Part (1) and Part (2) are equivalent. The conclusion follows. 
	\end{proof}
\begin{proof}[Alternative proof of Lemma \ref{lem::18}] By a direct computation we have $$E_{12}E_{13}F_{31}F_{21}x=(z^2 -z -\frac{1}{4}\Omega) x= (c^2-c-\frac{1}{4}b)x,$$ for any $x\in M(\la, \zeta)$ (see also \cite[Example 6.6]{CM}). The conclusion follows from \cite[Theorem 6.7]{CM}. 
\end{proof}

		We will generalize Lemma \ref{lem::18} later; see  Theorem \ref{thm23}. We now turn to the composition series of $\widetilde{M}(\la, \zeta)$ for atypical weight $\la$.  The following lemma will be  useful.

		\begin{lem} \label{lem19}
			For each atypical weight $\la\in \h^\ast$,  there are exactly two antidominant composition factors of $\widetilde{M}(\la)$. They are
		 $\widetilde{L}(\underline{\la}),~ \widetilde{L}(\underline{\la} -\alpha),$ where $\underline{\la}\in W\cdot \la$ is antidominant and $\alpha$ is the unique positive  odd root such that $(\underline{\la}+\rho, \alpha)=0$ and $\underline{\la} -\alpha$ is antidominant.
		\end{lem}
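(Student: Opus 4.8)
The plan is to understand the composition factors of the $\gl(1|2)$-Verma module $\widetilde{M}(\la) \cong K(M(\la))$ for atypical $\la$, and then single out which of them are antidominant. First I would recall the well-known structure of $\gl(1|2)$ Verma modules: by the results of Kac on induced modules for type I superalgebras, $\widetilde{M}(\la) = K(M(\la))$ has a filtration whose subquotients are the $\g_\oa$-Verma modules $M(\la - S)$, where $S$ ranges over subsets of the odd positive roots $\{\vare_1-\vare_2, \vare_1-\vare_3\}$, so we get four even Verma subquotients with highest weights $\la,\ \la-(\vare_1-\vare_2),\ \la-(\vare_1-\vare_3),\ \la-(\vare_1-\vare_2)-(\vare_1-\vare_3)$. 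Hence every composition factor of $\widetilde{M}(\la)$ is some $\widetilde{L}(\mu)$ with $\mu \le \la$ and $\mu$ appearing in one of these four even Vermas; in particular $\mu \in \la + \mZ\Phi$ and $\mu$ is linked to $\la$ under the (even) dot-action.

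Next I would identify the antidominant composition factors. A composition factor $\widetilde{L}(\mu)$ of $\widetilde{M}(\la)$ with $\mu$ antidominant must satisfy $\mu \in W\cdot\nu$ for the relevant $\g_\oa$-linkage; since we want $\mu$ itself antidominant, the candidates are $\underline{\la}$ (the antidominant element of $W\cdot\la$) and the antidominant weights linked to $\la$ via the odd reflections. The key computation is that for $\gl(1|2)$ with $\la$ atypical there is a unique positive odd root $\alpha$ with $(\underline{\la}+\rho,\alpha)=0$; I would check that among the four even-Verma highest weights $\la - S$, after applying the even Weyl group to reach antidominant representatives, exactly two distinct antidominant weights occur, namely $\underline{\la}$ and $\underline{\la}-\alpha$. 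This is essentially a finite check: the Weyl group here is $\Z/2$ acting by the reflection in $\vare_2-\vare_3$, and one tracks the four weights $\la, \la-(\vare_1-\vare_2), \la-(\vare_1-\vare_3), \la-(\vare_1-\vare_2)-(\vare_1-\vare_3)$ through the dot-action, using the atypicality condition from Lemma \ref{lem::18} (i.e. $(\underline\la+\rho,\alpha)=0$) to see the collapse from four to two. I should also verify $\widetilde{L}(\underline\la -\alpha)$ genuinely appears (nonzero multiplicity) — this follows because $M(\underline\la - \alpha)$ occurs as a subquotient in the Kac filtration and its highest weight is antidominant, so it contributes its own simple top, which is not killed by passing to $\widetilde{L}$'s since distinct antidominant weights give distinct simples.

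The main obstacle I anticipate is being careful about two things: first, that $\underline\la - \alpha$ is actually antidominant (the statement asserts this as part of the hypothesis/conclusion, so I would need to verify that for atypical $\la$ the unique odd root $\alpha$ orthogonal to $\underline\la+\rho$ does yield an antidominant $\underline\la-\alpha$ — this uses that $\alpha$ is a sum of a positive even root and the simple odd root in a controlled way, or a direct coordinate check), and second, ruling out that any \emph{other} antidominant weight sneaks in as a composition factor. For the latter I would argue that any composition factor highest weight $\mu$ lies in $\{w\cdot(\la - S)\}$ for $w\in W$, $S\subseteq\Phi_\ob^+$, and being antidominant forces $\mu$ to be the antidominant representative of one of the four cosets $W\cdot(\la-S)$; the atypicality relation then identifies these four representatives in pairs, leaving exactly $\underline\la$ and $\underline\la-\alpha$. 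A clean way to organize this is to reduce everything to the Kac functor and cite \cite[Proposition 6.9]{B} / Theorem \ref{To3rdmainthm} style reasoning about which simples survive $\ov\Gamma_\zeta$, but at the level of this lemma a direct weight-bookkeeping argument on the four even Verma subquotients, combined with Lemma \ref{lem::18}, should suffice.
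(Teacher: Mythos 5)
Your reduction to the four even Verma subquotients of $\Res\widetilde{M}(\la)\cong\Lambda(\g_{-1})\otimes M(\la)$ correctly bounds the possible highest weights of composition factors, but the two steps you build on it both have genuine gaps. First, the claimed collapse ``the atypicality relation then identifies these four representatives in pairs'' cannot happen: for $\gl(1|2)$ the Weyl group only permutes the $\vare_2,\vare_3$-coordinates (and $\rho_\oa=\tfrac12(\vare_2-\vare_3)$), so the $\vare_1$-coefficient is invariant under the dot-action. The four weights $\la,\ \la-(\vare_1-\vare_2),\ \la-(\vare_1-\vare_3),\ \la-(\vare_1-\vare_2)-(\vare_1-\vare_3)$ have $\vare_1$-coefficients $\la_1,\la_1-1,\la_1-1,\la_1-2$, so their dot-orbits give at least three distinct antidominant candidates for \emph{every} $\la$, atypical or not. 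The actual content of the lemma is that the antidominant representative of the orbit of $\la-(\vare_1-\vare_2)-(\vare_1-\vare_3)$ (and of one of the middle orbits) simply does \emph{not} occur as a composition factor, and no amount of weight bookkeeping on the Kac filtration can rule it out; this requires multiplicity information. Second, your positivity argument for $\widetilde{L}(\underline\la-\alpha)$ conflates even and super composition factors: the fact that $L(\underline\la-\alpha)$ occurs in $\Res\widetilde{M}(\la)$ (as the top of an even Verma subquotient) does not imply that $\widetilde{L}(\underline\la-\alpha)$ occurs in $\widetilde{M}(\la)$, since that even simple may be absorbed into $\Res\widetilde{L}(\nu)$ for some larger $\nu$ (indeed $\Res\widetilde{L}(\nu)$ generally contains several even simples). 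Your fallback of citing Backelin's Proposition 6.9 / Theorem \ref{To3rdmainthm} does not close this either, because those results translate multiplicities of $\widetilde{M}(\la,\zeta)$ into multiplicities $[\widetilde{M}(\la):\widetilde{L}(\nu)]$ for antidominant $\nu$ --- exactly the numbers this lemma is supposed to supply.

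For comparison, the paper's proof gets this multiplicity input directly: for non-integral atypical $\la$ it invokes the equivalence of the block $\mc O_\la$ with the principal block of $\gl(1|1)$, where every simple is antidominant and the Verma module has length two; for integral atypical $\la$ it uses BGG reciprocity together with the explicit Verma flags of the projective covers in the principal block of $\gl(2|1)$ listed in Lemma \ref{lemapp1} of the appendix (one reads off there, e.g., that a Verma in the principal block has three composition factors, of which exactly two are antidominant). If you want to salvage your approach, you would need to replace the ``collapse'' step by an actual computation of $[\widetilde{M}(\la):\widetilde{L}(\mu)]$ --- for instance via the length-two structure of atypical Kac modules $K(L(\mu))$ for $\gl(1|2)$ --- rather than by linkage constraints alone.
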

	\begin{proof}
		Suppose that $\la$ is non-integral. In this case, it is known that the block $\mc O_\la$ is equivalent to the principal block of $\gl(1|1)$ (see, e.g., \cite[Section 4.2]{CCL2} for an argument), and every simple module in $\mc O_\la$ are antidominant. Therefore   $\widetilde{M}(\la)$ has exactly two desired composition factors $L(\underline \la)$ and $L(\underline \la -\alpha)$ that are all antidominant. 
		
		Suppose that $\la$ is integral. Then the conclusion follows from the BGG reciprocity and Lemma \ref{lemapp1} in Section \ref{Sectapp}.  
	\end{proof}

		The $\mf g_\oa$-module $M(\la, \zeta)$ can be regarded as a submodule of $\Res\widetilde{M}(\la ,\zeta)$. Let $v\in \Whoa( M(\la,\zeta))$ be a non-zero Whittaker vector. By \cite[Lemma 5.6]{BCW}, the set 
		\begin{align*}
		&\{v_1:=v,~v_2:=F_{21}v,~v_3:=F_{21}F_{31}v,~v_4:=2aF_{31}v+F_{21}hv\},
		\end{align*} forms a basis for $\Whoa(\widetilde{M}(\la, \zeta)).$ 
		
	%	The following useful lemma was given in \cite[Section 6.2.2]{CM} 
	%	\begin{lem} Let $X$ be a simple $\mf g_\oa$-module. Then $K(X)$ is simple if and only if 	the central element $z^2 -z -(1/4)\Omega$ acts on $X$ as a non-zero scalar.  		\end{lem}
	%	\begin{proof}	By a direct computation we have $E_{12}E_{13}F_{31}F_{21}x=(z^2 -z -(1/4))\Omega x$, for any $x\in X$. The conclusion follows from \cite[Theorem 6.7]{CM}. \end{proof}

		\begin{prop} \label{pro20}
			Suppose that $\la$ is atypical with  $\underline{\la}\in W\cdot \la$ antidominant.   Let $\alpha$ be the unique positive odd root such that $(\underline{\la}+\rho, \alpha)=0$ and  $\underline{\la} -\alpha$ is antidominant. Then there is a short exact sequence
				\begin{align}
			    &0 \rightarrow  \widetilde{U}w\rightarrow \widetilde{M}(\la, \zeta)\rightarrow \widetilde{L}(\la,\zeta) \rightarrow 0,
				\end{align}
				where the submodule $\widetilde{U}w\cong \widetilde{L}(\underline{\la}-\alpha, \zeta)$ is  generated by the Whittaker vector 
				\begin{align}
				&w= \left\{\begin{array}{ll}
				v_2+\frac{1}{2(1-c)}v_4, &  \text{~for } c\neq 1;\\
				%v_2+\frac{1}{2}v_4, & \text{for } c=0;\\
			v_4, & \text{for } c=1;
				\end{array} \right.  
				\end{align} and $ \widetilde{L}(\la, \zeta)$ generated by the image of $v$ in the quotient  $\widetilde{M}(\la, \zeta)/\widetilde{U}w$.

				\begin{comment}
				\item[(2)]
			 If $c=0$. Then there is a short exact sequence
			\begin{align}
			&0 \rightarrow X\rightarrow \widetilde{M}(\la, f)\rightarrow Y \rightarrow 0,
			\end{align}
			where $X\cong \widetilde{L}(\la-\alpha, f)$ generated by the Whittaker vector 
			\begin{align}
			&w=v_2+\frac{1}{2}v_4, 
			\end{align} and $Y\cong \widetilde{L}(\la, f)$ generated by the image of $v$ in the quotient  $\widetilde{M}(\la, f)/X$. 
			
				\item[(3)]
			If $c=1$. Then there is a short exact sequence
			\begin{align}
			&0 \rightarrow X\rightarrow \widetilde{M}(\la, f)\rightarrow Y \rightarrow 0,
			\end{align}
			where $X\cong \widetilde{L}(\la-\alpha, f)$ generated by the Whittaker vector 
			\begin{align}
			&w=v_4, 
			\end{align} and $Y\cong \widetilde{L}(\la, f)$ generated by the image of $v$ in the quotient  $\widetilde{M}(\la, f)/X$. 	\end{comment}

		\end{prop}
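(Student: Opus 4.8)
The plan is to prove the proposition by an explicit computation with the basis $\{v_1,v_2,v_3,v_4\}$ of $\Whoa(\widetilde M(\la,\zeta))$ provided just before the statement, combined with the structural input of Lemmas \ref{lem::18} and \ref{lem19}. First I would record the action of the Chevalley generators $f=E_{32}$, $e=E_{23}$, $h$, and of $z$, together with the odd generators $E_{12},E_{13},F_{21},F_{31}$, on the generating Whittaker vector $v\in\Whoa(M(\la,\zeta))\subset\widetilde M(\la,\zeta)$; here $ev=av$, $hv=(\la_2-\la_3)v$, $zv=cv$, and $e$ acts on the remaining $v_i$ by the formulas implicit in \cite[Lemma 5.6]{BCW}. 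Since $\la$ is atypical, Lemma \ref{lem::18} gives the scalar relation $b=4c(c-1)$, and I would use this identity to simplify the coefficients appearing in $E_{1j}F_{k1}$ applied to $v$ (compare the alternative proof of Lemma \ref{lem::18}, where $E_{12}E_{13}F_{31}F_{21}x=(c^2-c-\tfrac14 b)x=0$ precisely in the atypical case).

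Next I would show that the candidate vector $w$ — equal to $v_2+\tfrac1{2(1-c)}v_4$ when $c\neq1$ and to $v_4$ when $c=1$ — is a Whittaker vector (it is, being a linear combination of the $v_i$) that is annihilated by $E_{12}$ and $E_{13}$, so that $U(\n)w\cong\mathbb C_\zeta$ and $\widetilde U w$ is a proper submodule of $\widetilde M(\la,\zeta)$: properness follows because $w$ does not generate $v_1=v$, as one sees by comparing $H$-eigenvalues or by the eigenspace decomposition argument used in the proof of Theorem \ref{mainthm1}. The weight of $w$ is $\underline\la-\alpha$ up to the $W_\zeta$-action, so $\widetilde U w$ is a quotient of $\widetilde M(\underline\la-\alpha,\zeta)$; since $\underline\la-\alpha$ is antidominant, $\widetilde M(\underline\la-\alpha,\zeta)=\widetilde L(\underline\la-\alpha,\zeta)$ is already simple by Theorem \ref{mainthm1typeI}(1) together with Lemma \ref{lem::MS21} (for a regular $\zeta$ and antidominant weight the standard module is simple, as noted for $\g_\oa$ via \cite[Theorem 3.6.1]{Ko78} and transported by the Kac functor). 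Hence $\widetilde U w\cong\widetilde L(\underline\la-\alpha,\zeta)$.

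To finish I would argue that the quotient $\widetilde M(\la,\zeta)/\widetilde U w$ is simple and isomorphic to $\widetilde L(\la,\zeta)$. By Lemma \ref{lem19} the module $\widetilde M(\la)$ has exactly two antidominant composition factors, $\widetilde L(\underline\la)$ and $\widetilde L(\underline\la-\alpha)$; applying the exact functor $\widetilde\Gamma_\zeta$ and using \eqref{qe::3} of Theorem \ref{To3rdmainthm} (which kills every simple with non-antidominant highest weight), the module $\widetilde M(\la,\zeta)=\widetilde\Gamma_\zeta(\widetilde M(\la))$ has exactly the two composition factors $\widetilde L(\la,\zeta)=\widetilde L(\underline\la,\zeta)$ and $\widetilde L(\underline\la-\alpha,\zeta)$, each with multiplicity one. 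Since we have already exhibited $\widetilde L(\underline\la-\alpha,\zeta)$ as the submodule $\widetilde U w$, the quotient must be $\widetilde L(\la,\zeta)$, which also matches the fact that $\widetilde L(\la,\zeta)$ is by definition the simple top of $\widetilde M(\la,\zeta)$. This yields the asserted short exact sequence.

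The main obstacle I expect is the explicit verification that $E_{12}w=E_{13}w=0$: this requires knowing the precise action of the odd root vectors on $v_2,v_3,v_4$, which in turn rests on the structure constants hidden in \cite[Lemma 5.6]{BCW}, and the cancellation that makes $w$ highest-weight for $\n_\ob$ is exactly where the atypicality relation $b=4c(c-1)$ (equivalently $(\la_1+\la_2)(\la_1+\la_3-1)=0$) must be invoked; the split into the cases $c\neq1$ and $c=1$ reflects a genuine degeneration of the coefficient $\tfrac1{2(1-c)}$. The remaining steps (properness of $\widetilde Uw$, simplicity of the pieces) are then formal consequences of the classification results already established.
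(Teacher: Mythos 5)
Your overall strategy (solve for the extra Whittaker vector among $v_2,v_3,v_4$ using the atypicality relation $b=4c(c-1)$, then identify the two composition factors via Lemma \ref{lem19} and the Whittaker functor $\widetilde{\Gamma}_\zeta$ of Theorem \ref{To3rdmainthm}) is essentially the paper's, but one step you lean on is false. You claim that $\widetilde{U}w\cong \widetilde{L}(\underline{\la}-\alpha,\zeta)$ because $\widetilde{U}w$ is a quotient of $\widetilde{M}(\underline{\la}-\alpha,\zeta)$, which you assert is simple since $\underline{\la}-\alpha$ is antidominant and $\zeta$ is regular, "transported by the Kac functor." The Kac functor does not preserve simplicity at atypical weights --- that failure is the whole point of this subsection --- and $\underline{\la}-\alpha$ is again atypical, since the odd roots of $\gl(1|2)$ are isotropic, so $(\underline{\la}-\alpha+\rho,\alpha)=(\underline{\la}+\rho,\alpha)=0$. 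By Lemma \ref{lem::18} (equivalently Proposition \ref{thm23}), $\widetilde{M}(\underline{\la}-\alpha,\zeta)$ is therefore \emph{not} simple, and your justification for the key isomorphism collapses. Moreover, "the weight of $w$" is not meaningful here (these modules are not weight modules); to realize $\widetilde{U}w$ as a quotient of a standard module one must compute the $Z(\g_\oa)$-central character of $Uw$ (the paper does exactly this, via $\Omega w$ and $zw$, in its alternative proof), so even that intermediate claim is unsupported as stated.

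The good news is that your last paragraph already contains the repair, which is also how the paper argues: once $E_{12}w=E_{13}w=0$ is verified, $\widetilde{U}w=U(\g_{-1})U(\g_0)w$ lies in $H$-eigenvalues strictly below $\la(H)$ (here your properness argument is fine, but note it uses $\g_1 w=0$, not merely that $w$ has lower eigenvalue), so $\widetilde{U}w$ is a nonzero proper submodule; and Lemma \ref{lem19} combined with Theorem \ref{To3rdmainthm} (with $W_\zeta=W$) shows $\widetilde{M}(\la,\zeta)$ has length two with factors $\widetilde{L}(\la,\zeta)$ and $\widetilde{L}(\underline{\la}-\alpha,\zeta)$. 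Any nonzero proper submodule of a length-two module with simple top is the radical, hence $\widetilde{U}w\cong \widetilde{L}(\underline{\la}-\alpha,\zeta)$ directly, with no appeal to simplicity of $\widetilde{M}(\underline{\la}-\alpha,\zeta)$. Delete the faulty step and route the identification through this counting argument (or through the central character computation, as in the paper's alternative proof), and the plan matches the paper's proof.
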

		\begin{proof}
		%	We note  \begin{align*} &E_{12}v_1=0,~E_{12}v_2 =\ov H_{12}v, \end{align*}
     Set $w:= Bv_2+Cv_3+Dv_4$ with $B,C,D\in \C$. By a direct computation  we have 
     \begin{align*}
     &E_{12}w=0\Longleftrightarrow Bc=\frac{-1}{2}Db, ~B =2D(1-c),~C=0;\\ 
     &E_{13}w=0 \Longleftrightarrow B =2D(1-c),~C=0.
     \end{align*}  
    % \begin{align}&b =4(c^2 -c). \label{eq::value}  \end{align} 
    
    Consequently, the equality $b =4(c^2 -c)$ coming from Lemma  \ref{lem::18} determines all relations between the coefficients $B$, $C$ and $D$ such that  $w\in \Whob(\widetilde{M}(\la, \zeta))$: 
    	\begin{align}
    &\left\{\begin{array}{ll} \label{eq512}
    B=2(1-c)D, &  \text{~for } c\neq 1;\\
   % B=2D, & \text{for } c=0;\\
    B=0~\text{ and $D$ is arbitrary,} & \text{for } c=1;\\
    C=0.
    \end{array} \right.  
    \end{align}
    
    By Lemma \ref{lem19} and Theorem B,  the   vector $w\in  \Whob(\widetilde{M}(\la ,\zeta))$ satisfying \eqref{eq512}   generates the desired proper simple submodule $\widetilde{U}w\cong \widetilde{L}(\underline \la -\alpha, \zeta).$ This completes the proof. 	\end{proof}

We also give an alternative proof of Proposition \ref{pro20}  without using Theorem B and the character formulas of $\gl(2|1)$ in Section \ref{Sectapp}.
 
\begin{proof}[Alternative proof of Proposition \ref{pro20}]	
	We claim that the length of a composition series of $\Res \widetilde{L}(\la, \zeta)$ is always $2$,  for any atypical weight $\la \in \h^\ast$. To see this, we first note that the length of $\Res\widetilde{M}(\la, \zeta)\cong \Lambda{\g_{-1}}\otimes{L}(\la, \zeta)$ is the dimension $\dim \Lambda{\g_{-1}} =4$ by \cite[Proposition 5.1]{BCW} (see also \cite[Theorem 4.6]{Ko78}). By Lemma \ref{lem::18}, it suffices to show that the length of $\Res \widetilde{L}(\la, \zeta)$ is equal to  or greater than $2$.
	
	By \cite[Theorem 4.1]{CM} there exists $\mu\in \h^\ast$ such that $\widetilde{L}(\la, \zeta)$ is the socle of $\widetilde{M}(\mu, \zeta)$, which is generated by $\Lambda^{\text{top}}\mf g_{-1}\otimes L(\mu,\zeta)$. If $\Res \widetilde{L}(\la, \zeta)$ is of length one then $\Res\widetilde{L}(\la, \zeta) = \Lambda^{\text{top}}\mf g_{-1} \otimes L(\mu, \zeta) \subset \Res \widetilde{M}(\mu, \zeta)$. But for any nonzero $v\in\Whoa(L(\mu,\zeta))$ we calculate \[E_{12}F_{21}F_{31}v = F_{31}(\frac{1}{2}h-1)v-\frac{1}{4a}F_{21}((b-2h-h^2)v),\]
	which is nonzero by \cite[Lemma 2]{Mc2} (see also  \cite[Lemma 5.5]{BCW}). Therefore we have shown that the length of $\Res \widetilde{L}(\la, \zeta)$ is  $2$. Consequently, there is a short exact sequence
	\begin{align}
	&0 \rightarrow X\rightarrow \widetilde{M}(\la, \zeta)\rightarrow Y \rightarrow 0,
	\end{align} where both $X,Y$ are simple Whittaker  modules such that $X$ is generated by the Whittaker vector $w$ from \eqref{eq512}.

	If $c\neq 0,1$ and $(\la+\rho, \beta)=0$, for some positive odd root $\beta$. Then by a direct computation we have  $$\Omega w= (4c^2-8c+3 )w,~zw = (\frac{-1}{2}+c)w,$$ which implies that $Uw$ admits the central character $\chi^\oa_{\la -\beta}$ associated with the weight $\la-\beta$. Since $\widetilde{U}w$ is a proper simple submodule, we may conclude that $\widetilde{U}w\cong \widetilde{L}(\la -\beta, \zeta)\cong \widetilde{L}(\underline \la -\alpha, \zeta)$, as desired. The remaining cases $c=0,1$ can be calculated by similar arguments. The conclusion follows.  
\end{proof}

The following corollary gives a description of block decomposition of $\widetilde{\mc N}(\zeta)$ for $\g=\gl(1|2).$
\begin{cor}
	Consider $\mf g=\gl(1|2)$. Let $\la,\mu \in \h^\ast$ be atypical and $\zeta \in \mc I$ regular. Then $\widetilde{L}(\la, \zeta)$ and $\widetilde{L}(\mu, \zeta)$ lie in the same indecomposable block of $\widetilde{\mc N}(\zeta)$ if and only if $\la \in W\cdot (\mu +k\alpha),$ where $\alpha$ is an odd root with $(\mu+\rho, \alpha)=0$.
\end{cor}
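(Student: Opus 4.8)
The plan is to sandwich the block of $\widetilde{L}(\mu,\zeta)$ between a lower bound coming from the linkages supplied by Proposition \ref{pro20} and an upper bound coming from the $Z(\mf g)$-central character decomposition together with the splitting $\widetilde{\mc N}(\zeta)=\bigoplus_{\Lambda}\widetilde{\mc N}(\Lambda,\zeta)$ of Section \ref{sect412}. Throughout, $\zeta$ regular means $W_\zeta=W$, so by Theorem \ref{mainthm1typeI} the simple objects of $\widetilde{\mc N}(\zeta)$ are the $\widetilde{L}(\nu,\zeta)$, $\nu\in\h^\ast$, with $\widetilde{L}(\nu,\zeta)\cong\widetilde{L}(\nu',\zeta)$ iff $W\cdot\nu=W\cdot\nu'$; hence we may replace all weights by their antidominant $W$-representatives, and the condition ``$\la\in W\cdot(\mu+k\alpha)$ for some $k\in\mZ$'' depends only on $W\cdot\la$ and $W\cdot\mu$. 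Finally, since every object of $\widetilde{\mc N}(\zeta)$ has finite length (Corollary \ref{prop::3}) and every simple $\mf g$-module carries a $Z(\mf g)$-central character, $\widetilde{\mc N}(\zeta)$ decomposes along $Z(\mf g)$-central characters; thus $\widetilde{L}(\la,\zeta)$ and $\widetilde{L}(\mu,\zeta)$ in one block forces $\chi^{\mf g}_\la=\chi^{\mf g}_\mu$ and, by Section \ref{sect412}, also forces them into a common summand $\widetilde{\mc N}(\Lambda,\zeta)$.

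For ``$\Leftarrow$'': pick an odd root $\alpha$ with $(\mu+\rho,\alpha)=0$; since $\alpha$ is isotropic, $(\mu+k\alpha+\rho,\alpha)=0$ for every $k$, so each weight of $\Omega:=W\cdot(\mu+\mZ\alpha)$ is atypical, and in fact $\Omega$ is precisely the set of weights of central character $\chi^{\mf g}_\mu$ inside the coset $\mu+\Upsilon$ (this is the ``shift of the atypicality value'' feature of $\gl(1|2)$ central characters). For each $\nu\in\Omega$, the module $\widetilde{M}(\nu,\zeta)$ is indecomposable (Theorem A, or Theorem \ref{mainthm1typeI}) and, being atypical, has exactly the two composition factors $\widetilde{L}(\nu,\zeta)$ and $\widetilde{L}(\underline{\nu}-\alpha_{\underline{\nu}},\zeta)$ by Proposition \ref{pro20} and Lemma \ref{lem19}, where $\alpha_{\underline{\nu}}$ is the positive odd root orthogonal to $\underline{\nu}+\rho$ with $\underline{\nu}-\alpha_{\underline{\nu}}$ antidominant; moreover $\underline{\nu}-\alpha_{\underline{\nu}}\in\Omega$ again (it has the same central character and lies in the same $\Upsilon$-coset). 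So each such pair of simples lies in one block, and chaining these links over all antidominant $\nu\in\Omega$ shows that all $\widetilde{L}(\nu,\zeta)$, $\nu\in\Omega$, lie in a single block; taking $\la\in\Omega$ yields this direction.

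For ``$\Rightarrow$'': suppose $\widetilde{L}(\la,\zeta)$ and $\widetilde{L}(\mu,\zeta)$ lie in one block. Then $\chi^{\mf g}_\la=\chi^{\mf g}_\mu$, which for atypical weights of $\gl(1|2)$ (by the same central character analysis) forces $\mu\in W\cdot(\la+\mathbb{C}\alpha)$ with $\alpha$ the atypicality root of $\la$. They also lie in a common summand $\widetilde{\mc N}(\Lambda,\zeta)$; since $\widetilde{L}(\underline{\nu},\zeta)$ has $L(\underline{\nu},\zeta)$, of $Z(\mf g_\oa)$-central character $\chi^{\oa}_{\underline{\nu}}$, as a $\mf g_\oa$-composition factor of its restriction, this $\Lambda$ is forced to be both $\underline{\la}+\Upsilon$ and $\underline{\mu}+\Upsilon$, whence $\underline{\la}-\underline{\mu}\in\Upsilon$. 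Because $\mathbb{C}\alpha\cap\Upsilon=\mZ\alpha$ for the odd root $\alpha$, the two constraints combine to $\mu\in W\cdot(\la+\mZ\alpha)$, i.e. $\la\in W\cdot(\mu+k\alpha)$ for some $k\in\mZ$ with $(\mu+\rho,\alpha)=0$, as desired.

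The main obstacle will be the combinatorial bookkeeping inside the ``$\Leftarrow$'' direction: one has to track how the positive odd root $\alpha_{\underline{\nu}}$ of Proposition \ref{pro20} varies as $\underline{\nu}$ moves along the string $\mu+\mZ\alpha$, in order to be sure the linkage graph on $\{\widetilde{L}(\nu,\zeta):\nu\in\Omega\}$ is connected --- the delicate point being the $W$-singular weight of $\Omega$, where $\underline{\nu}+\rho$ is orthogonal to both odd roots and $\alpha_{\underline{\nu}}$ switches from one odd root to the other. A secondary and more routine task is to record precisely the classification of atypical $Z(\mf g)$-central characters of $\gl(1|2)$ and the integral-coset constraint used in ``$\Rightarrow$''.
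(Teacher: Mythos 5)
Your ``$\Leftarrow$'' direction is in the spirit of how the paper intends this corollary to be read (it is stated as a consequence of Proposition \ref{pro20}: the standard modules $\widetilde{M}(\nu,\zeta)$ supply the links), and the bookkeeping you defer does work out: for each antidominant $\underline{\nu}$ on the string the factor $\widetilde{L}(\underline{\nu}-\alpha_{\underline{\nu}},\zeta)$ is the ``next'' antidominant weight down the string, also at the singular point, where the link simply switches to the other odd root, so the linkage graph is a connected path. The genuine problem is in your ``$\Rightarrow$'' direction.

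There you claim that common membership in a summand $\widetilde{\mc N}(\Lambda,\zeta)$ forces $\Lambda=\underline{\la}+\Upsilon=\underline{\mu}+\Upsilon$. But the defining condition of $\widetilde{\mc N}(\Lambda,\zeta)$ in Section \ref{sect412} only constrains which central characters $\chi^{\oa}_\nu$ occur in the restriction, and $\chi^{\oa}_\nu$ sees $\nu$ only up to the dot action of $W$; since all weights occurring in $\Res\widetilde{L}(\la,\zeta)$ lie in $\la+\mZ\Phi$ and $w(\mZ\Phi)=\mZ\Phi\subset\Upsilon$, the simple $\widetilde{L}(\la,\zeta)$ lies in $\widetilde{\mc N}(w\cdot\la+\Upsilon,\zeta)$ for \emph{every} $w\in W$, and for non-integral $\la$ the cosets $\la+\Upsilon$ and $s\cdot\la+\Upsilon$ differ. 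So all you can extract is $w_1\cdot\underline{\la}-w_2\cdot\underline{\mu}\in\Upsilon$ for some $w_1,w_2\in W$, and this together with $\chi^{\mf g}_\la=\chi^{\mf g}_\mu$ does \emph{not} give $\la\in W\cdot(\mu+\mZ\alpha)$. Concretely in $\gl(1|2)$: take $\la=(0,0,-t)$ with $t\notin\tfrac12\mZ$, $\alpha=\vare_1-\vare_2$, and $\mu=\la+c\alpha=(c,-c,-t)$ with $c=2t+2\notin\mZ$. Both weights are atypical with the same atypical $Z(\mf g)$-central character (same core $-t$), and $\la-s\cdot\mu=(-c,\,t+1,\,c-t-1)$ has $\vare_2$-minus-$\vare_3$ component $2t+2-c=0$, so $\la+\Upsilon=s\cdot\mu+\Upsilon$; since $\widetilde{L}(\mu,\zeta)\cong\widetilde{L}(s\cdot\mu,\zeta)$ (Theorem \ref{mainthm1typeI}, $W_\zeta=W$), both simples satisfy both of your necessary conditions, yet $\mu\notin W\cdot(\la+\mZ\alpha)$ (the equation $\mu=s\cdot(\la+k\alpha)$ forces $t\in\mZ$), so by the corollary itself they lie in different blocks. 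Hence your final ``combine'' step fails: the two invariants you use are too coarse. To close the argument you need a finer necessary condition, e.g.\ that a nonsplit extension of simples forces their parameters to differ, up to the dot action of $W$, by an element of $\mZ\Phi$ (the odd part of $\g$ only couples $Z(\g_\oa)$-eigenspaces whose characters are shifted by sums of odd roots), after which $\mC\alpha\cap\mZ\Phi=\mZ\alpha$ does finish; alternatively, deduce the ``only if'' from Proposition \ref{pro20} via the standard filtrations of the projective objects of $\widetilde{\mc N}(\Lambda,\zeta)^n$ appearing in the proof of Theorem \ref{thm::2}. A minor further point: your auxiliary claim that $\Omega=W\cdot(\mu+\mZ\alpha)$ is exactly the set of weights with central character $\chi^{\mf g}_\mu$ inside $\mu+\Upsilon$ fails for the same non-half-integral weights (the $s$-conjugates can leave the coset), but it is not needed for the linkage chain, which can be checked directly.
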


\subsubsection{Criteria for simplicity of standard Whittaker modules} \label{Section523}

In this subsection, we study  the simplicity of standard Whittaker modules for classical Lie superalgebras of types  A and  C from \eqref{eq::claA}, \eqref{eq::claC}.  We refer to \cite{ChWa12} for more details about the ortho-symplectic Lie superalgebras $\mf{osp}(m|2n)$. In particular, the notions of typical and atypical weights for    $\mf{osp}(m|2n)$ are defined in a similar fashion; see \cite[Section 2.2.6]{ChWa12}.

 For any $M\in \g$-Mod, denote by $\text{Ann}_{\widetilde{U}}M$ the annihilator of $M$. Similarly, we define the annihilator  $\text{Ann}_{U(\mf k)}N$ for any subalgebra $\mf k\subseteq \g$ and $N\in \mf k$-Mod.  We first show that the annihilators of standard Whittaker modules and Verma modules coincide.

\begin{prop} \label{prop23} Let $\mf g$ be a classical Lie superalgebra of type I. Then $${\emph Ann}_{\widetilde{U}}\widetilde{M}(\la, \zeta) = {\emph Ann}_{\widetilde U}\widetilde{M}(\la).$$
	
	In particular, if $\mf g$ is basic  then ${\emph Ann}_{\widetilde{U}}\widetilde{M}(\la, \zeta)$ is centrally generated for typical $\la$. 
\end{prop}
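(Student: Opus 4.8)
The plan is to reduce the statement about standard Whittaker modules to the corresponding statement about the underlying $\mf g_\oa$-modules, exploiting the type I structure via the Kac functor. First I would recall that for type I superalgebras we have the definition $\widetilde{M}(\la,\zeta) = K(M(\la,\zeta))$ from \eqref{eq::WhiKac}, and likewise $\widetilde{M}(\la) = K(M(\la))$. Since $K(-) = \Ind^{\g}_{\g_{\geq 0}}(-)$ and $\Res K(V) \cong \Lambda(\mf g_{-1})\otimes V$ as $\mf g_\oa$-modules, the annihilator in $\widetilde U$ of $K(V)$ depends only on the $\mf g_\oa$-module structure of $V$ through $\text{Ann}_{U}V$: more precisely, $\text{Ann}_{\widetilde U}K(V)$ is controlled by the two-sided ideal of $U$ generated (after the appropriate adjoint twisting) by $\text{Ann}_U V$, together with the $\g_1$- and $\g_{-1}$-actions which are the same for any $V$. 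Concretely, I would argue that if $V, V'$ are two $\mf g_\oa$-modules with $\text{Ann}_U V = \text{Ann}_U V'$, then $\text{Ann}_{\widetilde U}K(V) = \text{Ann}_{\widetilde U}K(V')$; this is because $K(V)$ is a free $U(\g_{-1})$-module on $V$, so an element $u\in \widetilde U$ annihilates $K(V)$ iff all its "matrix coefficients'' with respect to the PBW decomposition $\widetilde U \cong U(\g_{-1})\otimes U \otimes U(\g_1)$, after moving $U(\g_1)$ past $V$ (which kills it unless the degree is top) and reorganizing, lie in $\text{Ann}_U V$.

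The key input is then \cite[Theorem 3.9]{Ko78} together with \cite[Proposition 2.1(3)]{MS} — already invoked in Lemma \ref{lem::18} — which gives $\text{Ann}_U M(\la,\zeta) = \text{Ann}_U M(\la)$ for the $\mf g_\oa$-modules. Applying the reduction of the previous paragraph with $V = M(\la,\zeta)$ and $V' = M(\la)$ yields
$$\text{Ann}_{\widetilde U}\widetilde M(\la,\zeta) = \text{Ann}_{\widetilde U}K(M(\la,\zeta)) = \text{Ann}_{\widetilde U}K(M(\la)) = \text{Ann}_{\widetilde U}\widetilde M(\la),$$
which is exactly the first assertion. For the ``in particular'' clause: when $\mf g$ is basic and $\la$ is typical, the Verma module $\widetilde M(\la)$ has the property that its annihilator is centrally generated — this is the classical fact that for typical highest weights the central character determines the annihilator of the Verma module (going back to Kac's work on typical representations, see \cite{Ka2}; it also follows from the fact that for typical $\la$ the block $\mc O_\la$ is equivalent via $K$ to a block of $\cO^\oa$ and the central characters match up). Combining this with the identity just established gives that $\text{Ann}_{\widetilde U}\widetilde M(\la,\zeta)$ is centrally generated for typical $\la$.

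The main obstacle I anticipate is making the first reduction step fully rigorous: the claim that $\text{Ann}_{\widetilde U}K(V)$ depends on $V$ only through $\text{Ann}_U V$ requires care because $\widetilde U$ is not free as a module over $U$ in a way that splits off the central-character data cleanly, and one must track the adjoint action of $\g_\oa$ on $U(\g_{\pm 1})$. I expect the cleanest route is to show directly that $\text{Ann}_{\widetilde U}K(V) = \{u \in \widetilde U \mid \widetilde U\, u\, \widetilde U \cdot (1\otimes V) = 0\}$ and then express this in terms of the ideal $\widetilde U\,(\text{Ann}_U V)\,\widetilde U \cap (\text{something})$; alternatively one can bypass this entirely by noting $K$ is exact and faithful and that $K(V)$ and $K(V')$ have the same composition factors (as $\g$-modules) whenever $V, V'$ do — but here $M(\la,\zeta)$ and $M(\la)$ need not have the same composition factors, only the same annihilator, so the module-theoretic shortcut is unavailable and the direct PBW computation seems necessary. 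A safer formulation, which I would adopt if the general lemma proves delicate, is to use instead the characterization of $\text{Ann}_{\widetilde U}$ of an induced module in terms of the annihilator of the inducing module together with the observation that $Y_\zeta(\la,\zeta)$ and the Verma-type quotient $U(\mf l_\zeta)/\text{Ker}(\chi^{\mf l_\zeta}_\la)$ have the same annihilator in $U(\mf l_\zeta)$ by \cite[Theorem 3.9]{Ko78}, and then induce up through $\mf p_\zeta$ and $\g$ in one step, matching \eqref{eq33}.
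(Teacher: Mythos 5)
Your proposal is correct, and its backbone is the same as the paper's: both arguments combine Kostant's annihilator theorem with the Dixmier-type fact that the annihilator of an induced module depends only on the annihilator of the inducing module. The difference is where the induction is cut. The paper works in one step with the parabolic picture \eqref{eq33}: by \cite[Theorem 3.9]{Ko78}, $\mathrm{Ann}_{U(\mf l_\zeta)}Y_\zeta(\la,\zeta)=\mathrm{Ann}_{U(\mf l_\zeta)}M^{\mf l_\zeta}(\la)$, and then the argument of \cite[Proposition 5.1.7]{Di} applied to induction from $\mf p_\zeta$ to $\g$ gives the claim (this is exactly the ``safer formulation'' you mention at the end). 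Your primary route instead factors through $\g_\oa$: you quote $\mathrm{Ann}_{U}M(\la,\zeta)=\mathrm{Ann}_{U}M(\la)$ from \cite[Proposition 2.1(3)]{MS} and then pass through the Kac functor using \eqref{eq::WhiKac}. The general lemma you worry about -- that $\mathrm{Ann}_{\widetilde U}K(V)$ depends on $V$ only through $\mathrm{Ann}_{U}V$ -- is indeed true and is precisely the ingredient the paper cites as \cite[Lemma 6.4]{CM}; it follows from the PBW computation you sketch once phrased correctly: the set of elements of $\widetilde U$ killing $1\otimes V$ is the left ideal $\widetilde U\,\mathrm{Ann}_U V+\widetilde U\mf g_1$, so $\mathrm{Ann}_{\widetilde U}K(V)$ is the largest two-sided ideal contained in it, which manifestly depends only on $\mathrm{Ann}_U V$. (One small slip in your sketch: in the Kac module $\mf g_1$ acts trivially on $1\otimes V$, so positive $\mf g_1$-degree terms die outright; the ``top degree'' phenomenon belongs to $\mathrm{Ind}$ from $\g_\oa$, not to $K$.) What each route buys: the paper's one-step induction from $\mf p_\zeta$ stays entirely at the super level but uses the Section 5 standing assumption that $\mf l_\zeta$ is a Levi of a parabolic decomposition of $\mf g$, whereas your Kac-functor route needs only the type I definition \eqref{eq::WhiKac} and outsources the even case to Mili{\v{c}}i{\'c}--Soergel, so it applies verbatim to any $\zeta\in\mc I$ for any type I classical Lie superalgebra.
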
 
\begin{proof} Let $M^{\mf l_\zeta}(\la)$ denote the parabolic Verma module over $\mf l_\zeta$ of highest weight $\la$. By \cite[Theorem 3.9]{Ko78}, $\text{Ann}_{U(\mf l_\zeta)}Y_\zeta(\la,\zeta) = \text{Ann}_{U(\mf l_\zeta)} M^{\mf l_\zeta}(\la).$ The conclusion follows by an argument similar to the one in the proof of \cite[Proposition 5.1.7]{Di}  (see also \cite[Lemma 2.2]{MS} and \cite[Lemma 6.4]{CM}). 
\end{proof}

In the rest of this subsection, we let $\g$ be one of the series of   $\gl(m|n)$ and  $\mf{osp}(2|2n)$. The following lemma is taken from \cite[Corollary 6.8]{CM}.
\begin{lem} \label{lem::CMCor68}
	Let $V$ be a simple $\mf g_\oa$-module. Then $K(V)$ is simple if and only if ${\emph Ann}_{U} V ={\emph Ann}_{U} L(\la)$, for some typical $\la \in \h^\ast$.  
\end{lem}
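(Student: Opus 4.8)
Since the lemma is quoted from \cite[Corollary~6.8]{CM}, the plan is to derive it from the reducibility criterion for Kac modules in \cite[Theorem~6.7]{CM} together with two classical facts. Recall first that for $\g = \gl(m|n)$ and $\mf{osp}(2|2n)$ all odd roots are isotropic, and that there is a central element $b \in Z(\g_\oa)$ --- obtained by normal-ordering the product, over the positive odd roots $\beta$, of the ``super-Shapovalov'' operators attached to the $\beta$; for $\g = \gl(1|2)$ this is the element $z^2 - z - \frac{1}{4}\Omega$ appearing in the alternative proof of Lemma~\ref{lem::18} --- whose value under $\chi_\la^\oa$ is a nonzero scalar multiple of $\prod_{\beta \in \Phi_\ob^+}(\la + \rho, \beta)$. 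In particular $\chi_\la^\oa(b) \neq 0$ if and only if $\la$ is typical. The content of \cite[Theorem~6.7]{CM} that I would invoke is that, for a simple $\g_\oa$-module $V$, the Kac module $K(V)$ is reducible precisely when $b \in \text{Ann}_U V$, i.e. when $b$ acts by zero on $V$.

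Granting this, here is the plan. Since $V$ is simple, $Z(\g_\oa)$ acts on $V$ through a character $\chi$, and as $b$ is central it acts on $V$ by the scalar $\chi(b)$; hence $K(V)$ is simple exactly when $\chi(b)\neq 0$. For the forward implication, if $K(V)$ is simple then, by Duflo's theorem, $\text{Ann}_U V = \text{Ann}_U L(\la)$ for some $\la \in \h^\ast$; intersecting with $Z(\g_\oa)$ gives $\chi = \chi_\la^\oa$, so $\chi_\la^\oa(b) = \chi(b) \neq 0$ and $\la$ is typical. For the converse, if $\text{Ann}_U V = \text{Ann}_U L(\la)$ with $\la$ typical, then likewise $\chi = \chi_\la^\oa$ and $\chi(b) = \chi_\la^\oa(b) \neq 0$, so $K(V)$ is simple. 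Finally one checks that typicality is an intrinsic property of the ideal $\text{Ann}_U L(\la)$: if $\text{Ann}_U L(\mu) = \text{Ann}_U L(\la)$ then $\mu + \rho_\oa$ and $\la + \rho_\oa$ lie in a single $W$-orbit, and for both series $\rho - \rho_\oa = -\frac{1}{2}\sum_{\beta \in \Phi_\ob^+}\beta$ is $W$-invariant while $W$ permutes $\Phi_\ob^+$, so $\prod_{\beta \in \Phi_\ob^+}(\mu + \rho,\beta) = \prod_{\beta \in \Phi_\ob^+}(\la + \rho,\beta)$ and atypicality is preserved.

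The only genuinely nontrivial ingredient is \cite[Theorem~6.7]{CM}, whose proof I would not reproduce here; the passage from it to the statement of the lemma is the bookkeeping above. Its mechanism is that $K(V)$ has a simple top (Theorem~\ref{mainthm1typeI}(1) with $\zeta = 0$, or \cite[Theorem~4.1]{CM}) and a simple socle --- the latter via the isomorphism $K(V) \cong \Coind_{\g_{\geq 0}}^{\g}\big(\Lambda^{\text{top}}\g_{-1} \otimes V\big)$ coming from \cite{BF} as recorded in Section~\ref{Sect2} --- so that $K(V)$ is simple exactly when the canonical map from $K(V)$ to its dual Kac module is an isomorphism; the obstruction is the determinant of the induced pairing in the odd directions, which for these type I Lie superalgebras factors as the product over positive odd roots displayed above. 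Identifying that obstruction with the central element $b$, and verifying that it depends only on $\text{Ann}_U V$ rather than on any highest-weight presentation of $V$, is the main obstacle, and it is precisely what is carried out in \cite{CM}.
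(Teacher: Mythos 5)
Your proposal is sound in outline, but note that the paper itself offers no proof of this lemma at all: it is imported verbatim as \cite[Corollary 6.8]{CM}, so what you have written is essentially a reconstruction of how the source derives that corollary from its own Theorem 6.7, rather than an alternative to anything in this paper. The bookkeeping you supply is correct: $\mathrm{Ann}_U V$ is primitive, so Duflo's theorem gives $\mathrm{Ann}_U V=\mathrm{Ann}_U L(\la)$ for some $\la$ independently of any hypothesis on $K(V)$; equal annihilators force equal central characters; and since $W$ permutes $\Phi_{\ob}^+$ and $\rho-\rho_\oa$ is $W$-invariant for $\gl(m|n)$ and $\mf{osp}(2|2n)$, the polynomial $\prod_{\beta\in\Phi_{\ob}^+}(\la+\rho,\beta)$ is invariant under the dot-action, so typicality is indeed a function of the central character and hence of the annihilator (this also shows, via Harish-Chandra, that a central element $b$ with this eigenvalue exists). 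The one place where you are leaning on more than you acknowledge at first is your paraphrase of \cite[Theorem 6.7]{CM}: that criterion is naturally phrased in terms of the (a priori non-central) product of odd root vectors acting on the $\g_1$-invariants of $K(V)$, and the identification of its action with a central element of $U(\g_\oa)$ whose central-character value is the typicality product is exactly the extra, series-specific input (of Gorelik/Shapovalov type, illustrated in this paper only for $\gl(1|2)$ by the identity $E_{12}E_{13}F_{31}F_{21}=z^2-z-\tfrac14\Omega$ on $\g_1$-invariant vectors) that makes the corollary true for $\gl(m|n)$ and $\mf{osp}(2|2n)$ but not, say, for $\pn$. You do flag this as the main obstacle and defer it to \cite{CM}, which is consistent with the paper's own treatment; just be aware that with that ingredient black-boxed, your argument is a correct reduction rather than a self-contained proof, and it buys a transparent explanation of why the annihilator condition (rather than a highest-weight condition on $V$ itself) is the right formulation.
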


%We recall that the multiplicities of standard Whittaker modules have been given in \cite[Theorem 6.2]{B}.
%\begin{thm}[Backelin]\end{thm}

%\red{We recall the simplicity of $M(\la, f)$ has been given in \cite[Theorem 6.2]{B}. That is, $M(\la, f)$ is simple if and only if ..........., for any $\mu \in W\cdot \la.$ We obtain the criteria of the simplicity of standard Whittaker modules.}

\begin{prop} \label{thm23} For any $\la \in \h^\ast$ and $\zeta \in \mc I$, the following are equivalent. 
	\begin{itemize}
		\item[(1)] $\widetilde{M}(\la, \zeta)$ is simple.
		\item[(2)] $\la$ is typical and $M(\la, \zeta)$  is simple.
		\item[(3)] $\la$ is typical and there is a unique $\mf n_\zeta$-antidominant weight $\nu\in W\cdot \la$ with $\la \geq\nu.$ 
	\end{itemize} 
	
	In particular, if $\zeta$ is regular then $\widetilde{M}(\la, \zeta)$ is simple if and only if $\la$ is typical. 
\end{prop}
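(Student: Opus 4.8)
The plan is to prove the chain of equivalences $(1)\Leftrightarrow(2)\Leftrightarrow(3)$ by combining Theorem C, Theorem \ref{mainthm1typeI}, and the criterion from Lemma \ref{lem::CMCor68}. First I would establish $(1)\Leftrightarrow(2)$. By Theorem C,
\[
[\widetilde{M}(\la,\zeta):\widetilde{L}(\mu,\zeta)] \;=\; \sum_{\nu}[\widetilde{M}(\la):\widetilde{L}(\nu)],
\]
the sum running over $\mf n_\zeta$-antidominant $\nu$ with $\mu\in W_\zeta\cdot\nu$. Summing over all $\mu$ (up to $W_\zeta$) shows that the total composition length of $\widetilde{M}(\la,\zeta)$ equals the number of $\mf n_\zeta$-antidominant composition factors of $\widetilde{M}(\la)$, counted with multiplicity. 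In particular $\widetilde{M}(\la,\zeta)$ is simple iff $\widetilde{M}(\la)$ has exactly one $\mf n_\zeta$-antidominant composition factor, namely $\widetilde{L}(\la)$ with multiplicity one. Taking $\zeta=0$ (so $\mf n_\zeta=\mf n$, all weights antidominant) recovers that this condition says $\widetilde M(\la)=\widetilde L(\la)$, i.e. the Verma module $\widetilde M(\la)$ is simple; via $\widetilde{M}(\la)\cong K(M(\la))$ and Lemma \ref{lem::CMCor68} (with $V=M(\la)$, $L(\la)$ its own annihilator) this happens iff $\la$ is typical and $M(\la)$ is simple. More directly, apply Theorem C with $\zeta$: simplicity of $\widetilde M(\la,\zeta)$ forces the corresponding statement for $M(\la,\zeta)$ (take the underlying $\g_\oa$-side, or use $\widetilde\Gamma_\zeta$ of Theorem \ref{To3rdmainthm} together with $\Res\circ\widetilde\Gamma_\zeta\cong\overline\Gamma_\zeta\circ\Res$), plus typicality of $\la$ coming from $K(M(\la,\zeta))=\widetilde M(\la,\zeta)$ being simple via Lemma \ref{lem::CMCor68}.

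Next I would prove $(2)\Leftrightarrow(3)$, which is the Lie-algebra statement applied to $\g_\oa$ and the module $M(\la,\zeta)$. By Lemma \ref{lem::MS21} the standard module $M(\la,\zeta)$ has simple top $L(\la,\zeta)$, and by Backelin's functor $\overline\Gamma_\zeta$ (recalled in Section \ref{Sect51}, sending Verma modules to standard Whittaker modules and simples to simples or zero) one has
\[
[M(\la,\zeta):L(\mu,\zeta)] \;=\; \sum_{\nu}[M(\la):L(\nu)],
\]
sum over $\mf n_\zeta$-antidominant $\nu\in\h^\ast$ with $\mu\in W_\zeta\cdot\nu$; equivalently this is the $\zeta$-specialization of Theorem C. Hence $M(\la,\zeta)$ is simple iff $M(\la)$ has a unique $\mf n_\zeta$-antidominant composition factor, i.e. iff there is a unique $\mf n_\zeta$-antidominant weight $\nu\le\la$ in the linkage class $W\cdot\la$ (every composition factor $L(\nu)$ of $M(\la)$ satisfies $\nu\le\la$ and $\nu\in W\cdot\la$, and $L(\la)$ itself occurs with multiplicity one). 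This is exactly condition $(3)$. I would also note the converse direction: if $(3)$ holds then the above multiplicity identity gives total length one for $M(\la,\zeta)$, and then $\widetilde M(\la,\zeta)=K(M(\la,\zeta))$ is simple by Lemma \ref{lem::CMCor68} since typicality of $\la$ guarantees $\mathrm{Ann}_U M(\la,\zeta)=\mathrm{Ann}_U L(\la)=\mathrm{Ann}_U M(\la)$ (using Proposition \ref{prop23}'s $\g_\oa$-analogue, i.e. Kostant's $\mathrm{Ann}_U M(\la,\zeta)=\mathrm{Ann}_U M(\la)$).

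Finally, the last sentence: if $\zeta$ is regular then $W_\zeta=W$, so $\mf l_\zeta=\g_\oa$ and every weight is $\mf n_\zeta=\mf n_\oa$-antidominant only when it is $W$-antidominant; there is a unique antidominant weight in $W\cdot\la$, and it is automatically $\le\la$. Thus condition $(3)$ reduces to ``$\la$ is typical'' with no constraint, so by $(1)\Leftrightarrow(3)$ we get $\widetilde M(\la,\zeta)$ simple $\Leftrightarrow$ $\la$ typical. The main obstacle I anticipate is bookkeeping the antidominance convention consistently: making sure the $\mf n_\zeta$-antidominance in Theorem C matches the ``unique $\mf n_\zeta$-antidominant $\nu\in W\cdot\la$ with $\la\ge\nu$'' in $(3)$, and correctly invoking the $\g_\oa$-versions of Proposition \ref{prop23} and Lemma \ref{lem::CMCor68} to pin down typicality — once the dictionary is fixed, each implication is a short deduction from results already in the excerpt.
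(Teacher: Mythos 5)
Your overall strategy for the equivalence $(1)\Leftrightarrow(2)$ --- exactness of the Kac construction plus the criterion of Lemma \ref{lem::CMCor68} and the Kostant/Mili{\v{c}}i{\'c}--Soergel annihilator identity --- is essentially the paper's, but the step on which everything hinges is wrong as you state it. You claim that typicality of $\la$ guarantees $\mathrm{Ann}_U M(\la,\zeta)=\mathrm{Ann}_U L(\la)=\mathrm{Ann}_U M(\la)$. The second equality is false in general: typicality is a condition on odd roots and has nothing to do with whether the simple $\g_\oa$-module $L(\la)$ and the Verma module $M(\la)$ have the same annihilator in $U=U(\g_\oa)$; for instance, if $\la$ is dominant, regular, integral (and typical), then $L(\la)$ is finite dimensional and $\mathrm{Ann}_U L(\la)\supsetneq U\,\mathrm{Ker}\chi_\la^{\oa}=\mathrm{Ann}_U M(\la)$. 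What Lemma \ref{lem::CMCor68} requires is a \emph{typical} weight $\mu$ with $\mathrm{Ann}_U M(\la,\zeta)=\mathrm{Ann}_U L(\mu)$, and the paper produces it by taking $L(\mu)=\mathrm{soc}\,M(\la)$, i.e.\ $\mu=\underline{\la}$ the antidominant element of $W\cdot\la$: one has $\mathrm{Ann}_U M(\la,\zeta)=\mathrm{Ann}_U M(\la)=\mathrm{Ann}_U \mathrm{soc}\,M(\la)$ by \cite[Proposition 2.1(3)]{MS}, and $\underline{\la}$ is typical if and only if $\la$ is, since typicality is constant on $W$-dot-orbits. The same point is missing in your direction $(1)\Rightarrow(2)$: Lemma \ref{lem::CMCor68} only yields $\mathrm{Ann}_U M(\la,\zeta)=\mathrm{Ann}_U L(\mu)$ for \emph{some} typical $\mu$, and one still has to deduce $\chi_\mu^{\oa}=\chi_\la^{\oa}$, hence $\mu\in W\cdot\la$, hence $\la$ typical --- again via the socle identification. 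This annihilator bookkeeping through the antidominant representative is the actual content of the paper's argument, and without it your proof of $(1)\Leftrightarrow(2)$ does not close.

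Secondary points. The $\zeta=0$ aside is incorrect: for $\zeta=0$ one has $\mf n_\zeta=0$, not $\mf n$, and Lemma \ref{lem::CMCor68} cannot be applied with $V=M(\la)$ since it requires $V$ simple; fortunately that aside is not needed. In $(1)\Rightarrow(2)$, ``take the underlying $\g_\oa$-side'' should simply be the exactness of $K(-)$: a nonzero proper submodule $N\subset M(\la,\zeta)$ gives the nonzero proper submodule $K(N)\subset\widetilde M(\la,\zeta)$; neither Theorem \ref{To3rdmainthm} nor Theorem C gives this directly. In your Theorem C computation the unique $\mf n_\zeta$-antidominant composition factor need not be $\widetilde L(\la)$ (take $\la$ dominant), though this is not used later. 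Finally, your derivation of $(2)\Leftrightarrow(3)$ from the even multiplicity formula silently assumes that every $\mf n_\zeta$-antidominant $\nu\in W\cdot\la$ with $\nu\le\la$ actually occurs in $M(\la)$, and occurs with multiplicity one when unique; the paper sidesteps these points by invoking \cite[Theorem 6.2]{B} directly, which is the cleaner route. Your treatment of the regular case agrees in substance with the paper's appeal to Kostant's simplicity of $M(\la,\zeta)$.
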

\begin{proof} The fact that Part $(2)$ and Part $(3)$ are indeed equivalent is a consequence of \cite[Theorem 6.2]{B}.  Next, we recall that the socle $\text{soc} M(\la)$ of $M(\la)$ has a typical highest weight if and only if $\la$ is typical.    By \cite[Proposition 2.1(3)]{MS} it follows that  
	\begin{align} 
	&\text{Ann}_{U} M(\la, \zeta) =\text{Ann}_{U} M(\la) = \text{Ann}_{U} \text{soc} M(\la). \label{eq::Ann1}
	\end{align} 
	
 Now, suppose that $\widetilde{M}(\la, \zeta)$ is simple. Then the simplicity of $M(\la, \zeta)$ follows from the exactness of Kac functor $K(-)$ and  \eqref{eq::WhiKac}. By Lemma \ref{lem::CMCor68}, the socle $\text{soc} M(\la)$ is a simple $\mf g_\oa$-module of typical highest weight, and so  $\la$ is typical.   This shows the implication $(1)\Rightarrow (2)$.

 Conversely, suppose that $M(\la, \zeta)$ is simple and $\la$ is typical.  Then the proof of direction $(2)\Rightarrow (1)$ follows by Lemma \ref{lem::CMCor68}. 
 Finally, by \cite[Theorem 3.6.1]{Ko78}, we know that $M(\la,\zeta)$ is simple if $\zeta$ is regular. This completes the proof.
\end{proof}

\begin{proof}[Alternative proof of Propotision \ref{thm23}] As has been mentioned, Parts $(2)$ and $(3)$ are equivalent by \cite[Theorem 6.2]{B}. It remains to show $(1)\Leftrightarrow (2)$. First, we suppose on the contrary that $\widetilde{M}(\la, \zeta)$ is simple with $\la$ atypical. We know that ${M}(\la, \zeta)$ is simple by the exactness of the functor $K(-)$.  Let $\underline{\la}\in W\cdot \la$ be  antidominant. By  \cite[Theorem 51]{CCM} and Lemma  \ref{lem::CMCor68} the socle of $M(\la)$ is isomorphic to the socle of $K(\underline{\la})$, which   is a simple module of antidominant highest weight $\gamma$ with $\underline{\la} \neq \gamma$; see also \cite[Theorem 4.4, Proposition 4.15]{CCC}. Using the grading operator $d^\g$ from \cite[Sections 5.1, 5.2]{CM}, we know $\gamma \not \in W\cdot \la$, and so $\gamma \not \in W_\zeta\cdot \la$.  By Theorem \ref{maintypeI}, $\WG(\widetilde{L}(\gamma))\cong \widetilde{L}(\gamma,\zeta)$ is a composition factor that is not isomorphic to $\widetilde{L}(\la, \zeta)$ by Theorem \ref{mainthm1typeI}, a contradiction. 
	
	Conversely, suppose  that $\la$ is typical. Then we have $[\widetilde{M}(\la):\widetilde{L}(\mu)] =[M(\la):L(\mu)]$, for any $\mu\in \h^\ast$ (see, e.g., \cite[Theorem 1.3.1]{Gor2}). Therefore the simplicities of ${M}(\la, \zeta)$ and $\widetilde{M}(\la, \zeta)$ are equivalent by Theorem B. 
\end{proof}

\begin{cor}
Let $\la\in \h^\ast$ and $\zeta\in \mc I$ satisfy one of conditions  (1)-(3). Then $${\emph Ann}_{\widetilde{U}}\widetilde{L}(\la, \zeta) = {\emph Ann}_{\widetilde U}\widetilde{L}(\underline{\la}),$$ is centrally generated,  
where $\underline{\la}\in W\cdot \la$ is antidominant. 
\end{cor}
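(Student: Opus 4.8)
The plan is to assemble the statement from Proposition \ref{thm23}, Proposition \ref{prop23} and Lemma \ref{lem::CMCor68}, the point being that the hypotheses force $\widetilde M(\la,\zeta)$ to be simple so that no genuine new work is needed. First I would note that, by Proposition \ref{thm23}, if $\la,\zeta$ satisfy one of (1)--(3) then they satisfy all of them; in particular $\la$ is typical and $\widetilde M(\la,\zeta)$ is simple, so $\widetilde L(\la,\zeta)=\widetilde M(\la,\zeta)$ and hence, by Proposition \ref{prop23},
\[
\text{Ann}_{\widetilde U}\widetilde L(\la,\zeta)=\text{Ann}_{\widetilde U}\widetilde M(\la,\zeta)=\text{Ann}_{\widetilde U}\widetilde M(\la).
\]
Since $\mf g$ is here one of $\gl(m|n),\mf{osp}(2|2n)$, which are basic, and $\la$ is typical, the ``in particular'' clause of Proposition \ref{prop23} tells us this ideal is centrally generated; as $\widetilde M(\la)$ affords the central character $\chi^{\mf g}_\la$, its intersection with $Z(\widetilde U)$ is the maximal ideal $\text{Ker}\,\chi^{\mf g}_\la$, and therefore $\text{Ann}_{\widetilde U}\widetilde M(\la)=\widetilde U\,\text{Ker}\,\chi^{\mf g}_\la$. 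This already settles the ``centrally generated'' assertion.

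It then remains to recognize $\widetilde U\,\text{Ker}\,\chi^{\mf g}_\la$ as $\text{Ann}_{\widetilde U}\widetilde L(\underline\la)$. I would first observe that $\underline\la$ is again typical, since typicality of a weight depends only on its dot-orbit under $W$ (the odd roots form a $W$-stable subset of $\h^\ast$ and $\rho_{\ob}$ is fixed by $W$ for these $\mf g$), and $\underline\la\in W\cdot\la$. Next, the $\mf g_\oa$-Verma module $M(\underline\la)$ is simple because $\underline\la$ is antidominant, so $\widetilde M(\underline\la)=K(M(\underline\la))=K(L(\underline\la))$, which is simple by Lemma \ref{lem::CMCor68} applied to the typical weight $\underline\la$; consequently $\widetilde M(\underline\la)\cong\widetilde L(\underline\la)$. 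Applying Proposition \ref{prop23} to $\underline\la$ with the zero character (for which $\widetilde M(\underline\la,0)=\widetilde M(\underline\la)$) then gives, exactly as above, $\text{Ann}_{\widetilde U}\widetilde L(\underline\la)=\text{Ann}_{\widetilde U}\widetilde M(\underline\la)=\widetilde U\,\text{Ker}\,\chi^{\mf g}_{\underline\la}$. Finally $\chi^{\mf g}_\la=\chi^{\mf g}_{\underline\la}$, because central characters of classical Lie superalgebras are invariant under the dot-action of $W$ (Harish--Chandra); hence $\widetilde U\,\text{Ker}\,\chi^{\mf g}_\la=\text{Ann}_{\widetilde U}\widetilde L(\underline\la)$ and the chain of equalities closes.

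I do not expect a real obstacle here: the argument is a concatenation of results already proved in the paper, and the only points that call for a separate remark are the two $W$-stability facts — that typicality and that Harish--Chandra central characters are preserved under the dot-action of $W$ — for which I would either record the short weight computation or cite \cite{ChWa12,Mu12}. The conceptual content was already spent in establishing Proposition \ref{thm23} (simplicity) and Proposition \ref{prop23} (annihilators of Verma modules and their central generation in the typical case).
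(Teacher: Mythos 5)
Your proposal is correct and is essentially the intended derivation: the paper states this corollary without proof, as a direct assembly of Proposition \ref{thm23} (equivalence of (1)--(3), hence $\widetilde{L}(\la,\zeta)=\widetilde{M}(\la,\zeta)$ with $\la$ typical), Proposition \ref{prop23} (equality with $\mathrm{Ann}_{\widetilde U}\widetilde{M}(\la)$ and central generation in the typical basic case), and Lemma \ref{lem::CMCor68} giving $\widetilde{L}(\underline{\la})=K(L(\underline{\la}))=\widetilde{M}(\underline{\la})$ for the antidominant typical representative. Your two auxiliary observations (typicality and the central character $\chi^{\fg}_\la$ are constant on $W\cdot\la$) are exactly the points that need recording, and they are handled correctly.
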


%\subsubsection{Indecomposable blocks of $\mc N(f)$ for $\mf g=\gl(m|n)$ and regular $f$}

\subsection{The periplectic Lie superalgebras} \label{Sect53}
The standard matrix realization of the periplectic Lie superalgebra 
$\pn$ is given by
\begin{align}\label{plrealization}
 \mf{pe}(n):=
\left\{ \left( \begin{array}{cc} A & B\\
C & -A^t\\
\end{array} \right)\| ~ A,B,C\in \C^{n\times n},~\text{$B^t=B$ and $C^t=-C$} \right\} \subset \mathfrak{gl}(n|n).
\end{align}
 The Cartan subalgebra $\mf h \subset \mf g_\oo$ consists  of diagonal matrices above. 
 There is a standard basis of $\mf h$ defined as
\begin{align}
\{H_i:=E_{i,i}-E_{n+i,n+i}|~1\leq i \leq n \}, \label{eq::cartan}
\end{align}  
where $E_{a,b}\in \gl(n|n)$ denotes the $(a,b)$-matrix unit, 
for $1\leq a,b \leq 2n$. We denote  by $\{\vare_1, \vare_2, \ldots,\vare_{n}\}$ the dual basis for $\mf h^*$ with respect to the basis $\{H_i|~1\leq i\leq n\}$. 
 
 We recall from \cite[Section 5]{Se02} that a weight 
 $\displaystyle\la =\sum_{1\leq i\leq n}\la_i\vare_i\in \h^\ast$ is called {\em typical}  if  
 \[  \prod_{1\leq i\neq j \leq  n}(\la_i -\la_j +j-i-1) \neq 0. \]
  
  The following proposition gives composition factors of the standard Whittaker module $\widetilde{M}(\la, \zeta)$ in terns of the Kazhdan-Lusztig combinatorics for typical weight  $\la$. 
 \begin{prop}
 Let $\la \in \h^\ast$ be typical. Then for any  $\zeta\in \mc I$ we have   
 \begin{align}
 &[\widetilde{M}(\la, \zeta): \widetilde{L}(\mu, \zeta)] = [{M}(\la): {L}(\nu)],
 \end{align} where $\nu\in W_\zeta\cdot \mu$ is $\mf n_\zeta$-antidominant. 
 \end{prop}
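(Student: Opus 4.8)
The plan is to obtain the statement as a corollary of Theorem~C, after collapsing the sum on its right-hand side and rewriting the mixed multiplicity $[\widetilde{M}(\la):\widetilde{L}(\nu)]$ in terms of the even Verma module $M(\la)$. By Theorem~C (which follows from Theorem~\ref{To3rdmainthm}), for every $\mu\in\h^\ast$ we have
\[
[\widetilde{M}(\la,\zeta):\widetilde{L}(\mu,\zeta)]=\sum_{\nu}[\widetilde{M}(\la):\widetilde{L}(\nu)],
\]
the summation running over the $\mf n_\zeta$-antidominant weights $\nu$ with $\mu\in W_\zeta\cdot\nu$. The first observation is that each $W_\zeta$-dot-orbit on $\h^\ast$ contains exactly one $\mf n_\zeta$-antidominant weight, so this sum has a single term, namely the $\mf n_\zeta$-antidominant representative $\nu$ of $W_\zeta\cdot\mu$. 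Hence it remains to prove $[\widetilde{M}(\la):\widetilde{L}(\nu)]=[M(\la):L(\nu)]$.

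For this I would use that $\widetilde{M}(\la)=K(M(\la))$ and that the Kac functor $K$ is exact, so that applying $K$ to a composition series of the $\mf g_\oa$-module $M(\la)$ gives
\[
[\widetilde{M}(\la):\widetilde{L}(\nu)]=\sum_{\gamma}[M(\la):L(\gamma)]\,[K(L(\gamma)):\widetilde{L}(\nu)],
\]
the sum being over those $\gamma$ for which $L(\gamma)$ is a composition factor of $M(\la)$. The key point is that all such $\gamma$ are typical: $[M(\la):L(\gamma)]\neq0$ forces $\gamma\in W\cdot\la$ by the linkage principle, and Serganova's typicality condition is invariant under the dot-action of $W$ because, writing $x:=\gamma+\rho_\oa$, the defining product becomes the symmetric function $\prod_{i\neq j}(x_i-x_j-1)$. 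For typical $\gamma$ the Kac module $K(L(\gamma))$ is simple, hence it coincides with $\widetilde{L}(\gamma)$ via the canonical epimorphism $K(L(\gamma))\onto\widetilde{L}(\gamma)$, and distinct $\gamma$ give non-isomorphic $\widetilde{L}(\gamma)$'s. Therefore $[K(L(\gamma)):\widetilde{L}(\nu)]=\delta_{\gamma\nu}$, so $[\widetilde{M}(\la):\widetilde{L}(\nu)]=[M(\la):L(\nu)]$; note that the latter is then a Kazhdan--Lusztig multiplicity, since $\mf g_\oa\cong\gl(n)$ is reductive.

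The only nonformal ingredient is the simplicity of typical Kac modules $K(L(\gamma))$ over $\pn$, which is the analogue for $\pn$ of Lemma~\ref{lem::CMCor68}; this is the natural point where Serganova's notion of typicality enters (see \cite{Se02}), and the $W$-invariance of typicality recorded above is the only other small verification. All the remaining steps --- collapsing the sum in Theorem~C, exactness of $K$, and the bookkeeping of composition factors --- are routine, so I expect the only real obstacle to be citing the typical-simplicity statement for $\pn$ in the correct form.
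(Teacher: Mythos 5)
Your argument is correct in substance and follows the same skeleton as the paper's proof: apply the exact functor $\widetilde{\Gamma}_\zeta$ of Theorem \ref{To3rdmainthm} (i.e.\ Theorem C) and then replace the super multiplicities $[\widetilde{M}(\la):\widetilde{L}(\nu)]$ by the even ones $[M(\la):L(\nu)]$. The difference lies in how the second step is justified. The paper simply quotes \cite[Corollary 4.4]{CP} (see also \cite[Corollary 5.8]{Se02}), which states exactly that $[\widetilde{M}(\la):\widetilde{L}(\mu)]=[M(\la):L(\mu)]$ for typical $\la$ over $\pn$, whereas you re-derive this identity from the simplicity of $K(L(\gamma))$ for typical $\gamma$, using exactness of $K$, linkage for $\g_\oa\cong\gl(n)$, and the $W$-invariance of Serganova's typicality condition (your computation that the defining product becomes the symmetric expression $\prod_{i\neq j}(x_i-x_j-1)$ is correct). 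Be aware, however, that the ingredient you single out as the only nonformal input is essentially equivalent to the cited corollary: given exactness of $K$ and linkage, the multiplicity identity for typical $\la$ forces each $K(L(\gamma))$ with $\gamma$ in the orbit to be simple, and conversely. So your route does not remove the external input, it only relocates the citation; in particular Lemma \ref{lem::CMCor68} is stated in the paper only for $\gl(m|n)$ and $\mf{osp}(2|2n)$, so for $\pn$ you do need to pin the typical-simplicity statement to \cite{Se02} or \cite{CP} in the appropriate form (for simple highest weight, not just finite-dimensional, modules). Finally, your collapse of the sum in Theorem C to a single term relies on uniqueness of the $\mf n_\zeta$-antidominant representative of a $W_\zeta$-dot-orbit; for non-integral weights this depends on the convention for antidominance, but any two distinct antidominant representatives cannot both occur as highest weights of composition factors of $M(\la)$, so only one term of the sum survives and the formula is unaffected --- the paper's own two-line proof passes over this point at the same level of detail.
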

\begin{proof}
 By \cite[Corollary 4.4]{CP} we have  $[\widetilde{M}(\la):\widetilde{L}(\mu)]=[{M}(\la):{L}(\mu)]$, for any $\mu\in \h^\ast$ (ass also \cite[Corollary 5.8]{Se02}). The conclusion follows Theorem \ref{To3rdmainthm}. 
\end{proof}

We have the following sufficient condition for the simplicity of standard Whittaker modules over $\pn.$
 
 %Then $\widetilde{M}(\la ,f)$ is simple if and only if there is a unique composition factor of $\widetilde{M}(\la)$ that is $\mf n^f_\oa$-antidominant. 
 
	\begin{cor} Let $\la \in \h^\ast$ and $\zeta \in \mc I$. Then $\widetilde{M}(\la ,\zeta)=\widetilde{L}(\la ,\zeta)$ if either $\la$ is antidominant or $\zeta\in \mc I$ is regular. In particular, if $\la$ is antidominant then we have $${\emph Ann}_{\widetilde{U}}\widetilde{L}(\la, \zeta) = {\emph Ann}_{\widetilde U}\widetilde{L}(\la).$$ %In particular, in this case we have  {\em $\Whob(\widetilde{M}(\la, \zeta))$} $=$ {\em $\Whoa({M}(\la, \zeta))$}.
	\end{cor}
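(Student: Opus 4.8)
The statement has two parts: first, that $\widetilde M(\la,\zeta)=\widetilde L(\la,\zeta)$ whenever $\la$ is antidominant or $\zeta$ is regular; second, that for antidominant $\la$ the annihilator of $\widetilde L(\la,\zeta)$ coincides with that of $\widetilde L(\la)$ and is centrally generated. The approach is to reduce the simplicity claim to facts already assembled for $\pn$ and then deduce the annihilator statement from Proposition \ref{prop23} together with the simplicity just established.

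First I would handle the case when $\zeta$ is regular. By \cite[Theorem 3.6.1]{Ko78} (Kostant), when $\zeta$ is regular each $Y_\zeta(\la,\zeta)$ is obtained from the trivial Levi $\mf l_\zeta = \mf h$, so $M(\la,\zeta)=L(\la,\zeta)$ is already simple over $\mf g_\oa$. Since $\widetilde M(\la,\zeta)=K(M(\la,\zeta))$ and $K(-)$ is exact, it suffices to invoke the analogue of Lemma \ref{lem::CMCor68} for $\pn$: I would point to the simplicity criterion for Kac modules over the periplectic Lie superalgebra (using the typicality notion of \cite{Se02}), but in fact for this corollary the cleaner route is to use the preceding Proposition directly — when $\la$ is $\mf n_\zeta$-antidominant, the sum in Theorem C (here in the $\pn$-form just proved) collapses to the single term $[\widetilde M(\la,\zeta):\widetilde L(\la,\zeta)]=[M(\la):L(\la)]=1$. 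Thus for the regular case, every $\mf n_\zeta$-antidominant weight is just antidominant, so $W_\zeta\cdot\mu = W\cdot\mu$ and uniqueness holds, giving $\widetilde M(\la,\zeta)$ simple. For the case $\la$ antidominant (with $\zeta$ arbitrary), the weight $\la$ is already $\mf n_\zeta$-antidominant for every $\zeta$, so again the multiplicity formula gives $[\widetilde M(\la,\zeta):\widetilde L(\mu,\zeta)] = [M(\la):L(\nu)]$ with $\nu\in W_\zeta\cdot\mu$ antidominant; since $M(\la)$ with $\la$ antidominant is simple over $\mf g_\oa$, the only contribution is $\mu=\la$, forcing $\widetilde M(\la,\zeta)=\widetilde L(\la,\zeta)$.

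For the annihilator statement, I would combine three inputs: $\widetilde M(\la,\zeta)=\widetilde L(\la,\zeta)$ just proved, Proposition \ref{prop23} which gives $\mathrm{Ann}_{\widetilde U}\widetilde M(\la,\zeta)=\mathrm{Ann}_{\widetilde U}\widetilde M(\la)$, and the fact that for antidominant $\la$ the Verma module $\widetilde M(\la)$ is itself simple (its restriction $\Lambda(\mf g_{-1})\otimes M(\la)$ has $M(\la)=L(\la)$ as a submodule and $K(L(\la))$ is the socle; more simply, $\widetilde M(\la)=K(M(\la))=K(L(\la))$ and one checks it is simple since $M(\la)$ is antidominant simple, cf.\ the type-I results of \cite{CM}). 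Hence $\widetilde M(\la)=\widetilde L(\la)$ and chaining the equalities yields $\mathrm{Ann}_{\widetilde U}\widetilde L(\la,\zeta)=\mathrm{Ann}_{\widetilde U}\widetilde L(\la)$. Finally, the central generation of $\mathrm{Ann}_{\widetilde U}\widetilde M(\la)$ for antidominant $\la$ is a standard fact about (parabolic) Verma modules — antidominant Verma modules have centrally generated annihilators (Duflo), and this transfers to $\pn$ via the argument of \cite[Proposition 5.1.7]{Di} exactly as in the proof of Proposition \ref{prop23}.

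The main obstacle I anticipate is verifying that $\widetilde M(\la)$ is simple (equivalently $K(L(\la))$ is simple) for antidominant $\la$ over $\pn$: unlike the basic type I cases there is no typicality available, so one cannot invoke Lemma \ref{lem::CMCor68}. I would resolve this by observing that $\Res \widetilde M(\la)\cong \Lambda(\mf g_{-1})\otimes L(\la)$ has socle concentrated in the correct degree and appeal to \cite[Theorem 4.1]{CM} (socles of Kac modules) together with the fact that $L(\la)$ antidominant has $\mathrm{Ann}_U L(\la)=\mathrm{Ann}_U M(\la)$ — but if even this is delicate for $\pn$, the fallback is to bypass $\widetilde M(\la)$ entirely: apply Proposition \ref{prop23} with $\zeta$ itself, note $\widetilde M(\la,\zeta)=\widetilde L(\la,\zeta)$, and directly identify $\mathrm{Ann}_{\widetilde U}\widetilde M(\la)$ as centrally generated by the $\mathrm{Ann}_{U(\mf l_\zeta)}$-to-$\mathrm{Ann}_{\widetilde U}$ lifting argument of Proposition \ref{prop23}, then observe $\mathrm{Ann}_{\widetilde U}\widetilde L(\la)$ equals this same ideal because $\widetilde M(\la)\onto \widetilde L(\la)$ and the annihilator of a Verma is already maximal among annihilators of modules in its block for antidominant highest weight.
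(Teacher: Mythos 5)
Your argument has a genuine gap at its central step. Theorem \ref{To3rdmainthm} expresses $[\widetilde{M}(\la,\zeta):\widetilde{L}(\mu,\zeta)]$ in terms of the multiplicities $[\widetilde{M}(\la):\widetilde{L}(\nu)]$ of the \emph{super} Verma module, not of the even Verma module $M(\la)$; the substitution $[\widetilde{M}(\la):\widetilde{L}(\nu)]=[M(\la):L(\nu)]$ is precisely the content of the proposition preceding this corollary and is available only for \emph{typical} $\la$ (via \cite[Corollary 4.4]{CP}), whereas the corollary concerns arbitrary $\la$. Hence "since $M(\la)$ is simple, the only contribution is $\mu=\la$" does not follow: simplicity of $M(\la)$ (or of $M(\la,\zeta)$, for regular $\zeta$) does not imply simplicity of its Kac induction, and exactly this inference fails for $\gl(1|2)$, where for atypical antidominant $\la$ and regular $\zeta$ the module $\widetilde{M}(\la,\zeta)$ has length two even though $M(\la,\zeta)=L(\la,\zeta)$ (Lemma \ref{lem::18}, Proposition \ref{pro20}, Lemma \ref{lem19}). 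Likewise, in the regular case "uniqueness of the antidominant weight in the orbit" does not give simplicity; what is needed is that $\widetilde{M}(\la)$ over $\pn$ has a unique antidominant composition factor occurring once. The fact the corollary actually rests on is special to $\pn$: the Verma module $\widetilde{M}(\la)$ is simple for antidominant $\la$, quoted from \cite[Lemma 5.11]{CC} (see also \cite[Lemma 3.2]{Se02}). Given this, one applies $\widetilde{\Gamma}_\zeta$ (Theorem \ref{To3rdmainthm}) to $\widetilde{M}(\la)=\widetilde{L}(\la)$ to get $\widetilde{M}(\la,\zeta)=\widetilde{L}(\la,\zeta)$; for regular $\zeta$ one first replaces $\la$ by the antidominant representative of $W\cdot\la$ using Theorem \ref{mainthm1typeI}(3); and the annihilator equality then follows from Proposition \ref{prop23} because both $\widetilde{M}(\la,\zeta)$ and $\widetilde{M}(\la)$ are simple.

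You do flag the simplicity of $\widetilde{M}(\la)$ for antidominant $\la$ as the anticipated obstacle, but neither proposed resolution closes it. The socle argument ("$\mathrm{Ann}_U L(\la)=\mathrm{Ann}_U M(\la)$ together with \cite[Theorem 4.1]{CM}") cannot suffice: the same annihilator equality holds for antidominant atypical weights of $\gl(1|2)$, where $K(L(\la))$ is not simple, and the simplicity criterion of Lemma \ref{lem::CMCor68} has no typicality substitute for $\pn$, as you yourself note. The fallback route is circular for the simplicity claim, since it presupposes $\widetilde{M}(\la,\zeta)=\widetilde{L}(\la,\zeta)$, which was only obtained from the flawed multiplicity collapse. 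Finally, your added assertion that the annihilator is centrally generated (via Duflo) is not part of the statement and is not justified here: $\pn$ is not basic, and Proposition \ref{prop23} yields central generation only for basic $\g$ and typical $\la$.
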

	\begin{proof} 
		It was shown in \cite[Lemma 5.11]{CC} (see also  \cite[Lemma 3.2]{Se02}) that $\widetilde{M}(\la)$ is simple if $\la$ is  antidominant. The conclusion follows from Theorem  \ref{To3rdmainthm} and Proposition \ref{prop23}.
	\end{proof}

	\begin{ex}
		Consider $\mf g=\mf{pe}(2)$. Set $X_{12}:=E_{14}+E_{23},~Y_{12}:=E_{32}-E_{41}$. Suppose that $\zeta\in \mc I$ is regular. Then $M(\la, \zeta)$ is simple for any $\la \in \h^\ast$. We have $\Whoa(\Res \widetilde{M}(\la, \zeta))= \C v \oplus \C Y_{12}v$, for any nonzero vector $v\in \Whoa(M(\la, \zeta))$. Then the elements in the set $\{(H_{1}-H_{2})^kv|~k\geq 0\}$ are linear independent by \cite[Lemma 2]{Mc2}. By a direct computation we obtain that $X_{12}Y_{12}v= (H_{1}-H_{2})v$, and so this verifies that  $\Whob(\widetilde{M}(\la, \zeta)) =\C v$.
	\end{ex}

\begin{comment}
\begin{prop}
	Let $\la,f\in \mf h^\ast$. Suppose that $\la$ is typical and $f$ is regular. Then $\widetilde{M}(\la,f)$ is simple. 
\end{prop}
\begin{proof} Recall that $\Ind_{\mf g_\oa+\g_1}^\g (V)$ is simple if $\text{Ann}_U V = \text{Ann}_U L(\la)$, for some typical $\la \in \h^\ast$. Since $f$ is regular, $M(\la, f)$ is a simple $\mf g_\oa$-module 	by \cite[Theorem 3.6.1]{Ko78}. The conclusion follows from \eqref{eq::Ann1}.
\end{proof}\end{comment}

\subsection{The equivalence $\widetilde{T}$} \label{sect55}

In this subsection, we continue to work under the assumption that  $\g$ is a classical Lie superalgebra of type I. Let  $\la\in \h^\ast$ and $\zeta\in \mc I$   such that $\la$ is dominant with $W_\la =W_\zeta$. 

We recall that the equivalence $\widetilde{T}$ of the categories $\mc B_\la$ and $\widetilde{\mc N}(\la+\Upsilon,  \zeta)$ as  constructed in Theorem \ref{thm::2}. We are going to consider the effect of $\widetilde{T}$ on standard   and simple objects in $\mc B_\la$. The following is an analog of \cite[Proposition 5.15]{MS}. 
\begin{prop}  \label{prop32}  For any $\mu \in \la+\Upsilon$, the module $\mc L(M(\la), \widetilde{M}(\mu))$ has a simple top $S_{\la,\zeta}(\mu)$ such that 	
	 \begin{align}
	 &\widetilde{T}(\mc L(M(\la), \widetilde{M}(\mu))) = \widetilde{M}(\mu, \zeta), \label{eq261}\\
	 &\widetilde{T}(S_{\la,\zeta}(\mu)) = \widetilde{L}(\mu, \zeta). \label{eq262} 
	 \end{align}
\end{prop}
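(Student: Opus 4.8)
The plan is to identify $\mc L(M(\la),\widetilde M(\mu))$ as an object of $\mc B_\la$, to compute $\widetilde T$ on it by transporting the Lie algebra statement \cite[Proposition~5.15]{MS} through the Kac functor, and then to deduce the assertion about the simple top formally from the fact that $\widetilde T$ is an equivalence. First I would check $\mc L(M(\la),\widetilde M(\mu))\in\mc B(I_\la)\subseteq\mc B_\la$: since $I_\la=U\,\mathrm{Ker}\,\chi^{\oa}_\la$ annihilates $M(\la)$, the right $U$-action on $\Hom_{\C}(M(\la),\widetilde M(\mu))$ factors through $U/\mathrm{Ann}_U M(\la)$, so $\mc L(M(\la),\widetilde M(\mu))\,I_\la=0$; and finite generation follows from $\widetilde M(\mu)=K(M(\mu))$ together with $\Res\widetilde M(\mu)\cong\Lambda(\mf g_{-1})\otimes M(\mu)$, which has a finite Verma flag, plus left-exactness of $\mc L(M(\la),-)$ and the classical finiteness of the standard bimodules $\mc L(M(\la),M(\mu+\nu))$ (here one uses that the adjoint action is through $\mf g_\oa$ in either picture, so that $\Res_{(U,U)}\mc L(M(\la),\widetilde M(\mu))=\mc L(M(\la),\Res\widetilde M(\mu))$).

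The key point I would establish is the compatibility
\[
\mc L(M(\la),\widetilde M(\mu))\;\cong\;K\bigl(\mc L(M(\la),M(\mu))\bigr)
\]
of $(\widetilde U,U)$-bimodules, where on the right $\mc L(M(\la),M(\mu))$ is viewed as a $\mf g_{\geq 0}$-module with $\mf g_1$ acting by zero and then induced to $\g$. The $\mf g_{\geq 0}$-submodule $1\otimes M(\mu)\subseteq K(M(\mu))=\widetilde M(\mu)$, on which $\mf g_1$ acts by zero, induces by post-composition a map $\mc L(M(\la),M(\mu))\to\mc L(M(\la),\widetilde M(\mu))$ that is equivariant for the adjoint $\mf g_\oa$-action, the right $U$-action, and the zero $\mf g_1$-action; Frobenius reciprocity for $K=\Ind_{\mf g_{\geq 0}}^{\g}$ then yields a $(\widetilde U,U)$-bimodule map $K(\mc L(M(\la),M(\mu)))\to\mc L(M(\la),\widetilde M(\mu))$, and I would check bijectivity after restricting the left action to $U$, using $\Res K(M(\mu))\cong\Lambda(\mf g_{-1})\otimes M(\mu)$ and $\mc L(M(\la),V\otimes N)\cong V\otimes\mc L(M(\la),N)$ for finite-dimensional $V$. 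Keeping $K=\Ind_{\mf g_{\geq 0}}^{\g}$ on both sides avoids any one-dimensional $\Lambda^{\mathrm{top}}$-twist.

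With this in hand, since $\mc L(M(\la),\widetilde M(\mu))$ is killed by $I_\la$, Lemma~\ref{lem::MSlem}(4) makes the transition maps in the inverse system $\mc L(M(\la),\widetilde M(\mu))\otimes_U M^{n}(\la,\zeta)$ into isomorphisms, so $\widetilde T(\mc L(M(\la),\widetilde M(\mu)))=\mc L(M(\la),\widetilde M(\mu))\otimes_U M(\la,\zeta)$. Combining the compatibility, the associativity $(U(\g)\otimes_{U(\mf g_{\geq 0})}Y)\otimes_U M(\la,\zeta)\cong U(\g)\otimes_{U(\mf g_{\geq 0})}(Y\otimes_U M(\la,\zeta))$, the Lie algebra identity $\mc L(M(\la),M(\mu))\otimes_U M(\la,\zeta)\cong M(\mu,\zeta)$ of \cite[Proposition~5.15]{MS} (valid because $\mu\in\la+\Upsilon$ and $\la$ is dominant with $W_\la=W_\zeta$), and the definition $\widetilde M(\mu,\zeta)=K(M(\mu,\zeta))$ of \eqref{eq::WhiKac}, I obtain $\widetilde T(\mc L(M(\la),\widetilde M(\mu)))\cong K(M(\mu,\zeta))=\widetilde M(\mu,\zeta)$, which is \eqref{eq261}. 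Finally, $\widetilde T$ is an equivalence of abelian categories whose objects have finite length (Theorem~\ref{thm::2}, Corollary~\ref{prop::3}), and $\widetilde M(\mu,\zeta)$ has simple top $\widetilde L(\mu,\zeta)$ by Theorem~\ref{mainthm1typeI}(1); since an equivalence carries radical to radical, $\mc L(M(\la),\widetilde M(\mu))$ has a simple top, which I would take as the definition of $S_{\la,\zeta}(\mu):=\widetilde T^{-1}(\widetilde L(\mu,\zeta))$, and then \eqref{eq262} holds by construction.

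I expect the main obstacle to be the bimodule compatibility $\mc L(M(\la),K(-))\cong K(\mc L(M(\la),-))$: verifying that the Frobenius-reciprocity map is bijective and is a genuine map of $(\widetilde U,U)$-bimodules, and keeping track of the left $\mf g_{\pm 1}$-action (in particular, that the maximal ad-finite submodule of $\Hom_\C(M(\la),\widetilde M(\mu))$ is automatically stable under it, so that it does coincide with $\mc L(M(\la),\widetilde M(\mu))$). Everything else is either a direct citation of \cite{MS} or a formal consequence of $\widetilde T$ being an equivalence.
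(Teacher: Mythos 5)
Your proposal is correct and follows essentially the same route as the paper: the paper's proof consists of the one-line commutation $\widetilde{T}(\mc L(M(\la), K({M}(\mu))))  \cong  K({T}(\mc L(M(\la), {M}(\mu))))$ followed by the citation of \cite[Proposition 5.15]{MS}, and your argument supplies exactly the details behind that commutation (the bimodule isomorphism $\mc L(M(\la),K(-))\cong K(\mc L(M(\la),-))$, the identification of $\widetilde{T}$ with $-\otimes_U M(\la,\zeta)$ on $\mc B(I_\la)$ via Lemma \ref{lem::MSlem}(4), and associativity of induction) before invoking the same classical input. The final deduction of the simple top from the fact that $\widetilde{T}$ is an equivalence, with $S_{\la,\zeta}(\mu)=\widetilde{T}^{-1}(\widetilde{L}(\mu,\zeta))$, is likewise the paper's implicit concluding step.
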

\begin{proof}
 We first note that 
	 \begin{align*}
	&\widetilde{T}(\mc L(M(\la), \widetilde{M}(\mu))) \cong  \widetilde{T}(\mc L(M(\la), K({M}(\mu))))  \cong  K({T}(\mc L(M(\la), {M}(\mu)))),
	\end{align*} where  $T$ is the equivalence   in Lemma \ref{lem::MSthm53}.  By \cite[Proposition 5.15]{MS},  we know that ${T}(\mc L(M(\la), {M}(\mu)))\cong M(\mu, \zeta)$. This claim follows.
	\end{proof}

 Now, we put together all the pieces from previous sections. Then, Theorem \ref{thm::2}, Theorem \ref{To3rdmainthm} and   Proposition \ref{prop32} imply that  the composition factors of the {\em standard  Harish-Chandra bimodule} $\mc L(M(\la), \widetilde{M}(\mu))$ in Proposition \ref{prop32} can be  computed via Kazhdan-Lusztig combinatorics   for $\g=\gl(m|n)$ and $\mf{osp}(2|2n)$.

\section{Appendix} \label{Sectapp}
\subsection{Character formulae of $\gl(2|1)$} The goal of this subsection is to give the list of  character formulae of projective covers in the principal block of $\mc O$ of $\mf g:=\gl(2|1)$  computed in \cite{ChWa08}. With the canonical isomorphic $\gl(1|2)\cong \gl(2|1)$ and the BGG reciprocity, the composition factors of Verma modules over $\gl(1|2)$ can be read off from character formulae in this subsection.

 We choose the Borel subalgebra $\mf b :=\bigoplus_{1\leq i\leq j\leq 3}\C E_{ij}$ and the standard Cartan subalgebra $\h^\ast:=\oplus_{i=1}^3\C E_{ii}$. Set $\rho\in \h^\ast$ to be the corresponding Weyl vector. For $\mu \in \mf h^\ast$, we let  $M^{\mf a}(\mu)$ denote Verma module over $\mf{gl}(2|1)$ with highest weight $\mu -\rho$. Also, we let $P^{\mf a}(\mu)$   denote the projective cover of the simple quotient of $M^{\mf a}(\mu)$. For any $a,b,c\in \C$, we set $(a|b,c):= a\vare_1+b\vare_2+c\vare_3$. We adopt the notation $P^{\mf a}(\la) = \sum_{\mu \in \h^\ast}(P^{\mf a}(\la): M^{\mf a}(\mu))M^{\mf a}(\mu)$ to record the Verma flag structure $P^{\mf a}(\la).$
 
	\begin{lem} \label{lemapp1} \cite[Section 9]{ChWa08}
		We have the following character formulae:
		\begin{align*}
		&(1). ~P^{\mf a}(0,0|0) = M^{\mf a}(0,0|0)+M^{\mf a}(0,1|-1)+M^{\mf a}(1,0|-1). \\
		&(2). ~P^{\mf a}(0,-1|1) = M^{\mf a}(0,-1|1)+M^{\mf a}(0,0|0)+M^{\mf a}(1,0|-1).\\
		&(3). ~P^{\mf a}(-1,0|1) = M^{\mf a}(-1,0|1)+M^{\mf a}(0,-1|1)+M^{\mf a}(00|0).\\
		&(4). ~P^{\mf a}(0,-k|k) = M^{\mf a}(0,-k|k)+M^{\mf a}(0,-(k-1)|(k-1)),~\text{for $k> 1$}.\\
		&(5). ~P^{\mf a}(-k,0|k) = M^{\mf a}(-k,0|k)+M^{\mf a}(0,-k|k) +M^{\mf a}(-(k-1),0|(k-1))+\\&M^{\mf a}(0,-(k-1)|(k-1)),~\text{for $k> 1$}.\\
		&(6). ~P^{\mf a}(k,0|-k) = M^{\mf a}(k,0|-k)+M^{\mf a}(k+1,0|-(k+1)),~\text{for $k\geq 1$}.\\
		&(7). ~P^{\mf a}(0,k|-k) = M^{\mf a}(0,k|-k)+M^{\mf a}(k,0|-k)+M^{\mf a}(0,k+1|-(k+1))+\\&M^{\mf a}(k+1,0|-(k+1)),~\text{for $k\geq 1$}.
		\end{align*}	 
	\end{lem}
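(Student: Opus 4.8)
The plan is to obtain each of the seven Verma-flag decompositions from BGG reciprocity together with an explicit description of the Jordan--H\"older series of the Verma modules of $\g = \gl(2|1)$. Since $\mathcal{O}$ for $\gl(2|1)$ is a highest weight category carrying a simple-preserving duality, one has $(P^{\mf a}(\lambda):M^{\mf a}(\mu)) = [M^{\mf a}(\mu):L^{\mf a}(\lambda)]$, so it suffices to compute all multiplicities $[M^{\mf a}(\mu):L^{\mf a}(\nu)]$ for $\mu,\nu$ linked to $0$ and then, for each fixed $\lambda$, read off the column $\{\mu \mid [M^{\mf a}(\mu):L^{\mf a}(\lambda)]\ne 0\}$. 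So the first step is to pin down the principal integral block: its weights are the $\mu\in\h^\ast$ dot-linked to $0$ under the Weyl group $W = S_2$ and odd reflections, and a direct check identifies them as $(0,0|0)$ together with the two families $(0,\pm k|\mp k)$ and $(\pm k,0|\mp k)$ for $k\ge 1$. Recording each weight by its weight/cap diagram exhibits the two families --- the ``singular'' weights $(0,\pm k|\mp k)$ and the ``regular'' weights $(\pm k,0|\mp k)$ --- and the distinguished corner $(0,0|0)$, and shows the block has the shape of an $A_\infty$-zigzag with a single corner.

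The second step is to compute the Verma composition multiplicities, which for $\gl(2|1)$ does not require the general super Kazhdan--Lusztig algorithm. Writing $M^{\mf a}(\mu) \cong K(M^{\g_\oa}(\mu))$ for the even Verma module over $\mathfrak{gl}(2)\oplus\mathfrak{gl}(1)$ and using exactness of the Kac functor $K$, one gets that $M^{\mf a}(\mu)$ has a two-step filtration by Kac modules (the even Verma module having length at most two by $\mathfrak{sl}(2)$-theory), and decomposing each Kac module via the known action of odd reflections on Verma modules together with the central-character constraint forces $[M^{\mf a}(\mu):L^{\mf a}(\nu)]\in\{0,1\}$ with values prescribed by the zigzag linkage; equivalently one may invoke the known block structure of $\gl(2|1)$, whose quiver in this block is the zigzag, from which the radical layers of every Verma module are immediate. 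The outcome is the expected pattern: each ``regular'' Verma $M^{\mf a}(\pm k,0|\mp k)$ with $k\ge 1$ and each ``singular'' Verma $M^{\mf a}(0,\pm k|\mp k)$ with $k\ge 2$ has two composition factors, while the three Verma modules at and adjacent to the corner, $M^{\mf a}(0,0|0)$, $M^{\mf a}(0,\pm 1|\mp 1)$ and $M^{\mf a}(\mp 1,0|\pm 1)$, carry one extra factor.

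Finally I would transpose: for each $\lambda$, collecting all $\mu$ with $L^{\mf a}(\lambda)$ occurring in $M^{\mf a}(\mu)$ yields the Verma flag of $P^{\mf a}(\lambda)$, and one checks it agrees with the list --- the interior ``singular'' cases (4), (6) have flag length $2$; the corner cases (1)--(3) have length $3$; the interior ``regular'' cases (5), (7) have length $4$ --- the flags being arranged so that $\sum_\mu (P^{\mf a}(\lambda):M^{\mf a}(\mu))\,[M^{\mf a}(\mu):L^{\mf a}(\nu)] = [P^{\mf a}(\lambda):L^{\mf a}(\nu)]$ comes out symmetric in $\lambda,\nu$, a convenient consistency check. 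The main obstacle is precisely the bookkeeping near the corner: the generic zigzag pattern governing items (4)--(7) breaks at $0$ because the two branches of the dot-orbit meet there, so items (1)--(3) must be verified by hand --- for instance checking directly that $L^{\mf a}(0,0|0)$, $L^{\mf a}(0,-1|1)$ and $L^{\mf a}(-1,0|1)$ all occur in $M^{\mf a}(0,0|0)$ --- while ruling out spurious multiplicities $>1$ or extra composition factors. Everything else is routine once the zigzag combinatorics of the block is in hand; this is exactly the computation carried out in \cite[Section 9]{ChWa08}, which we quote.
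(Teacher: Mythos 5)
Your overall strategy --- BGG reciprocity $(P^{\mf a}(\lambda):M^{\mf a}(\mu))=[M^{\mf a}(\mu):L^{\mf a}(\lambda)]$ combined with a direct determination of the Verma composition series via the Kac filtration $M^{\mf a}(\mu)\cong K(M^{\g_\oa}(\mu))$ --- is viable, and it is genuinely different from the paper, which offers no argument and simply quotes the canonical-basis computation of \cite[Section 9]{ChWa08}. However, the composition multiplicities you assert in your second step are wrong, and they are inconsistent with the very formulae you want to prove. By reciprocity, the Verma-flag length of $P^{\mf a}(\lambda)$ equals the number of Vermas (with multiplicity) in which $L^{\mf a}(\lambda)$ occurs, so over one period of each infinite family the Verma lengths must sum to the flag lengths: items (4)$+$(5) give $2+4=6$ per $k$, while your claimed pattern (length $2$ generically, one extra factor at three modules near the corner, never length $4$) gives $2+2=4$. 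Transposing (1)--(7) in fact forces: $M^{\mf a}(0,0|0)$ has exactly the three factors $L^{\mf a}(0,0|0)$, $L^{\mf a}(0,-1|1)$, $L^{\mf a}(-1,0|1)$; $M^{\mf a}(0,k|-k)$ and $M^{\mf a}(-k,0|k)$ ($k\geq 1$) have two factors; but $M^{\mf a}(k,0|-k)$ and $M^{\mf a}(0,-k|k)$ ($k\geq 1$) have \emph{four} factors --- e.g.\ items (1),(2),(6),(7) show $M^{\mf a}(1,0|-1)$ contains $L^{\mf a}(1,0|-1)$, $L^{\mf a}(0,1|-1)$, $L^{\mf a}(0,0|0)$ and $L^{\mf a}(0,-1|1)$. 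The mechanism your count misses is that on the side of each branch where $\mu_1>\mu_2$ (in $\rho$-shifted coordinates) the even Verma is reducible, so the super Verma has a two-step filtration by \emph{two} atypical Kac modules, each contributing two factors; your picture of ``two factors generically, one extra near the corner'' never produces the length-four flags in (5) and (7), which you nevertheless acknowledge.

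There is a second, subtler gap at exactly the place you propose to check by hand. The rule you implicitly use for the second composition factor of an atypical Kac module $K(L_{\g_\oa}(\nu))$ (move along the atypical odd root, constrained by central characters) fails at the corner, and central characters give no constraint there because all weights of the block share one. For instance, for $\rho$-shifted weight $(1,0|-1)$ the module $L_{\g_\oa}(\nu)$ is one-dimensional (even highest weight $(1,1|-2)$), so $K(L_{\g_\oa}(\nu))$ is four-dimensional and its socle is the \emph{trivial} module $L^{\mf a}(0,-1|1)$, not $L^{\mf a}(0,0|0)$ as the naive rule predicts; similarly the Kac module of the even-singular simple at the corner, which equals $M^{\mf a}(0,0|0)$, has three composition factors rather than two. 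These corner corrections are precisely where the content of (1)--(3) and the length-four flags in (5),(7) comes from. If you redo the Verma series with the correct pattern above, the transposition does reproduce (1)--(7); as written, your consistency check would fail rather than confirm the lemma, so either fix step 2 or do as the paper does and simply quote \cite[Section 9]{ChWa08}.
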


\vspace{2mm}

\noindent
Chih-Whi Chen:~Department of Mathematics, National Central University, Zhongli District, Taoyuan City, Taiwan;
E-mail: {\tt cwchen@math.ncu.edu.tw}
\hspace{2cm}

%%%%%%%%%%%%%%%%%%%%%%%%%%%%%%%%%%%%%%%%%%%%%%%%%%%%%%%%%%%%%%%%%%%%%%%%%%%%%%%

%%%%%%%%%


\begin{thebibliography}{999999}
		
	%	\bibitem[AM]{AM} 	H.~Andersen and V.~Mazorchuk. {\em Category $\mc O$ for quantum groups}.  J. Eur. Math. Soc. (JEMS) {\bf 17} (2015), no. 2, 405--431. 
		
		
		
			\bibitem[AB]{AB}
		J.C.~Arias, E.~Backelin
		{\em Projective and Whittaker functors on category $\mc O$}. Preprint 
		\href{https://arxiv.org/abs/2007.05697}{https://arxiv.org/abs/2007.05697}. 
		
			\bibitem[ALZ]{ALZ}
		D.~Adamovi\'c, Rencai L\"u,  K.~Zhao. 
		{\em Whittaker modules for the affine Lie algebra $A_1^{(1)}$.} 
	Adv. Math. {\bf 289} (2016): 438--479.
		
		\bibitem[Ba]{B}
		E.~Backelin. 
		{\em Representation of the category $\mc O$ in Whittaker categories}. 
		Internat. Math. Res. Notices
		(1997), {\bf 4}, 153--172.
		
		\bibitem[Bo]{Bo1}
		H.~Bao
			{\em Kazhdan-Lusztig theory of super type D and quantum symmetric pairs}. 
		Represent. Theory {\bf 21} (2017), 247-276
		
		
		\bibitem[Br1]{Br1} J.~Brundan, {\em Kazhdan-Lusztig polynomials and character formulae for
			the Lie superalgebra $\mf{gl}(m|n)$}, J.~Amer.~Math.~Soc.~{\bf 16} (2003), 185--231.
		
		
		\bibitem[Br2]{Br}
		J. Brundan. 
		{\em Representations of the general linear Lie superalgebra in the BGG category $\mathcal O$}. In: "Developments and Retrospectives in Lie Theory: Algebraic Methods," eds. G. Mason et al., Developments in Mathematics {\bf 38}, Springer, 2014, pp. 71--98.
		
		\bibitem[Bro]{A1}
		A.~Brown. 
		{\em Arakawa-Suzuki functors for Whittaker modules.}
		J. Algebra, {\bf 538} (2019): 261--289.
		
	\bibitem[BB]{BB} 
	A.~Beilinson; J.~Bernstein, 
	{\em Localisation de g-modules},  Sér. I Math., 292, Paris: C. R. Acad. Sci. (1981),  15--18.
	
	
		
		\bibitem[BF]{BF}
		A.~ Bell and R.~ Farnsteiner. 
		{\em On the theory of Frobenius extensions and its application to Lie superalgebras}.
		Trans. Amer. Math. Soc. {\bf 335} (1993) 407--424.
		
		\bibitem[BG]{BG}
			J. N. Bernstein and S. I. Gelfand, 
			{\em Tensor products of finite- and infinite-dimensional
			representations of semisimple Lie algebras}, Compositio Math. 41 (1980), no. 2, 245--285.
			MR581584
			
					\bibitem[BGo]{BG2}
			J. Brundan, S.M. Goodwin.
			{\em Whittaker coinvariants for $GL(m|n)$}. Adv. Math., 347 (2019), pp. 273--339


\bibitem[BK]{BK} 
J.~Brylinski; M.~Kashiwara, 
{\em Kazhdan-Lusztig conjecture and holonomic systems}, 
Invent. Math., 
Springer-Verlag, {\bf 64} (3): 387--410.
			
				\bibitem[BM]{BM}
			P. Batra and Volodymyr Mazorchuk. 
			{\em Blocks and modules for Whittaker pairs.}
			J. Pure Appl. Algebra
			{\bf 215.7} (2011): 1552--1568.
			
					\bibitem[BO]{Be}
			G.~Benkart, M.~Ondrus. 
			{\em Whittaker modules for generalized Weyl algebras.}
			Represent. Theory (2009) 13: 141--164.
			
			
			
				
			\bibitem[BS]{BS}
			J.~Brundan, C.~Stroppel. 
			{\em Semi-infinite highest weight categories}. 
			Preprint. 		\href{https://arxiv.org/abs/1808.08022}{https://arxiv.org/abs/1808.08022}. 
			
			
			
		%	\bibitem[BM]{BW}	P. ~Batra and Volodymyr Mazorchuk. 		{\em Blocks and modules for Whittaker pairs.} 			J. Pure Appl. Algebra {\bf 215.7} (2011): 1552--1568.
			
			
			\bibitem[BW]{BaoWang} H.~Bao and W.~Wang, {\em A new approach to Kazhdan-Lusztig theory of type B via quantum symmetric pairs},   Asterisque 402, 2018, vii+134pp.
			
				\bibitem[BCW]{BCW}
			I. Bagci, K. Christodoulopoulou, E. Wiesner. Whittaker 
			{\em Categories and Whittaker modules for Lie
				superalgebras}. Comm. Algebra {\bf 42} (2014), no. 11, 4932--4947.
			
			
        \bibitem[BGG1]{BGG0}
		I.~Bern\v{s}tein, I.~Gel'fand and S.~Gel'fand. 
		{\em Differential operators on the base affine space and a study of 
		$\mathfrak{g}$-modules}. Lie groups and their representations 
		(Proc. Summer School, Bolyai Janos Math. Soc., Budapest, 1971), 
		pp. 21--64. Halsted, New York, 1975.
			
		\bibitem[BGG2]{BGG}
		I.~Bern\v{s}tein, I.~Gel'fand and S.~Gel'fand.
		{\em A certain category of~$\mathfrak{g}$-modules}.
		Funkcional. Anal. i Prilo\v{z}en. {\bf 10} (1976), no. 2, 1--8.
		 
		
		
	
		 
		
	%	\bibitem[Ch]{Ch}	C.-W. ~Chen.	{\em Finite-dimensional representations of periplectic Lie superalgebras}. 		J. Algebra {\bf 443} (2015), 99--125.
		
			
		\bibitem[Ch]{Chri}
		K.~Christodoulopoulou. 
		{\em Whittaker modules for Heisenberg algebras and
			imaginary Whittaker modules for affine Lie algebras.} J. Algebra (2002) 320:2871--2890.
			
			\bibitem[Co]{Co}
			K.~Coulembier.
			{\em Gorenstein homological algebra for rngs and Lie superalgebras}. Preprint \href{https://arxiv.org/pdf/1707.05040.pdf}{https://arxiv.org/pdf/1707.05040.pdf}.
			
	
		
			\bibitem[CC]{CC}
		C.-W. ~Chen and K.~Coulembier.
		{\em The primitive spectrum and category $\mc O$ for the periplectic Lie superalgebra}. Canad. J. Math. {\bf 72} 
		(2020), no. 3, 625--655.
		
		%	 \bibitem[ChCo]{CC2} S.J.~ Cheng and K.~Coulembier. 		{\em Representation theory of a semisimple extension of the Takiff superalgebra}. Preprint \href{https://arxiv.org/abs/2005.11441}{arXiv::2005.11441}.
		
		
		
		
		\bibitem[CP]{CP} C.-W.~Chen and Y.-N.~Peng. {\em Parabolic category $\mc O^\fp$ for periplectic Lie superalgebras $\pn$}. 	 Preprint \href{https://arxiv.org/pdf/2002.10311.pdf}{https://arxiv.org/pdf/2002.10311.pdf}.
		
			\bibitem[CM]{CM}
		C.-W. Chen and V.~Mazorchuk.
		{\em Simple supermodules over Lie superalgebras}.
		Trans. Amer. Math. Soc. {\bf 374} (2021), 899-921
	 %	Preprint \href{https://arxiv.org/abs/1801.00654}{arXiv:1801.00654},	to appear in Trans. Amer. Math. Soc..
	 
	 

	 
	 
	 	\bibitem[CMa]{CoM}
	 K. Coulembier and V. Mazorchuk.
	 {\em Extension fullness of the categories of Gelfand--Zeitlin and Whittaker modules.}
	 SIGMA 	Symmetry Integrability Geom. Methods Appl. {\bf  11} (2015), Paper No. 016.
	 
	 
	 
%	 \bibitem[CoM1]{CM1}	 K.~Coulembier and V.~Mazorchuk. {\em Some homological properties of category $\mc O$. III}. Adv. Math. {\bf 283} (2015), 204--231.
	 
	 
	% \bibitem[CoM2]{CoM1} K.~Coulembier and V.~Mazorchuk. {\em Primitive ideals, twisting functors and star actions for classical Lie superalgebras}. J. Reine Ang. Math., {\bf 718} (2016), 207--253.
	 
	 
	% \bibitem[CoM3]{CoM2} K.~Coulembier and V.~Mazorchuk. {\em Some homological properties of category $\mc O$. IV}. 	 Forum Math. {\bf 29} (2017), no. 5, 1083--1124.
	 
	 
	 
	 %\bibitem[CS]{CS} K.~Coulembier and V.~Serganova. {\em Homological invariants in category $\mc O$ for the general linear superalgebra}, 	 Trans. Amer. Math. Soc. {\bf 369} (2017), 7961--7997.
	 
	 \bibitem[CW1]{ChWa08}
	 S.-J. Cheng and W. Wang, 
	 {\em Brundan-Kazhdan-Lusztig and Super Duality Conjectures.} 
	 Publications of the
	 Research Institute for Mathematical Sciences, 
	 2008, {\bf 44(4)}:1219--1272.
	 
	 \bibitem[CW2]{ChWa12} 
	 S.-J.~Cheng and W.~Wang.
	 {\em Dualities and representations of Lie superalgebras}. Graduate
	 Studies in Mathematics {\bf 144}. American Mathematical Society,
	 Providence, RI, 2012.
	 
	 %\bibitem[CW3]{ChWa18}  S.-J.~Cheng and W.~Wang. {\em Character formulae in category O for exceptional Lie 	 	superalgebras $D(2|1;\zeta)$}. 	 Transform. Groups (2018). 	 \href{https://doi.org/10.1007/s00031-018-9506-5}{https://doi.org/10.1007/s00031-018-9506-5}
	 
		
		\bibitem[CCC]{CCC}
		C.-W. ~Chen, S.-J.~ Cheng and K.~ Coulembier.
		{\em  Tilting modules for classical Lie superalgebras}.
		Preprint \href{https://arxiv.org/abs/1907.06579}{arXiv:1907.06579}, to appear in J. Lond. Math. Soc..
		
			 \bibitem[CCL]{CCL2}
		C.-W.~Chen, S.-J. Cheng, Li Luo. 
		{\em Blocks and characters of $G(3)$-modules of non-integral weights.} 
		Preprint \href{https://arxiv.org/abs/2012.00998}{https://arxiv.org/abs/2012.00998}.
	
	
		\bibitem[CCM]{CCM}
		C.-W. Chen, K.~Coulembier and V.~Mazorchuk.
		{\em Translated simple modules for Lie algebras and simple supermodules for Lie superalgebras}. Math. Z. {\bf 297}, 255--281 (2021).
		\href{https://link.springer.com/article/10.1007/s00209-020-02510-y}{https://link.springer.com/article/10.1007/s00209-020-02510-y}.
		
			\bibitem[CDH]{CDH}
		H.~Chen, X.~Dai, Y.~Hong
		{\em A class of super Heisenberg-Virasoro algebras}.
		Preprint. 		\href{https://arxiv.org/abs/2009.06853}{https://arxiv.org/abs/2009.06853}. 
			

\bibitem[CLW1]{CLW1} S.-J. Cheng, N. Lam and W. Wang, {\em Super duality and irreducible characters of ortho-symplectic
	Lie superalgebras}, Invent. Math. {\bf 183} (2011), 189--224.

\bibitem[CLW2]{CLW2} S.-J.~Cheng, N.~Lam and W.~Wang,
{\em Brundan-Kazhdan-Lusztig conjecture for general linear Lie superalgebras}, Duke~Math.~J.~{\bf 110} (2015), 617--695.
	
	 \bibitem[CMW]{CMW}
	S.-J. Cheng, V. Mazorchuk and W. Wang.
	{\em Equivalence of blocks for the general linear Lie superalgebra}.
	Lett. Math. Phys. 
	{\bf 103} (2013), 1313--1327.
	
		\bibitem[CPS]{CPS1}
	E. Cline, B. Parshall and L. Scott, 
	{\em Finite dimensional algebras and highest weight categories,}
	J. Reine Angew. Math. {\bf 391} (1988), 85--99.
	
	
	
	
	\bibitem[Di]{Di} J.~Dixmier. {\em Enveloping algebras}. 
	Graduate Studies in Mathematics, {\bf 11}. 
	American Mathematical Society, Providence, RI, 1996. xx+379 pp.
	
		\bibitem[DMP]{DMP}
	I.~Dimitrov, O.~Mathieu, I.~Penkov
	{\em On the structure of weight modules.}
	Trans. Amer. Math. Soc. {\bf 352} (2000), no. 6, 2857--2869.

	
	%\bibitem[DS]{DS} M.~Duflo and  V.~Serganova. {\em On associated variety for Lie superalgebras}.	Preprint \href{https://arxiv.org/abs/math/0507198}{https://arxiv.org/abs/2002.10311}
	
	\bibitem[Go1]{Go} 
	 M. Gorelik. 
	{\em On the ghost centre of Lie superalgebra}. 
	Ann. Inst. Fourier (Grenoble) 50 (2000), no.
	{\bf 6}, 1745--1764.
	
	
	 
	
	\bibitem[Go2]{Gor2}
	 M. Gorelik. 
	 {\em Strongly typical representations of the basic classical Lie superalgebras}. 
	 J. Amer. Math.
	Soc. {\bf 15} (2002) no. 1, 167--184.
	
	\bibitem[Go3]{Gor3}
	 M. Gorelik. 
	 {\em Annihilation theorem and separation theorem for basic classical Lie superalgebras}. J. Amer. Math.
	 Soc.  {\bf 15} (2002), no. 1, 113--165.
	 

	 
	 
	  \bibitem[GLZ]{GLZ1}
	 X.~Guo, R. Lu and K. Zhao. 
	 {\em Irreducible modules over the Virasoro algebra}, Doc. Math., {\bf 16} (2011), 709--721
	
	\bibitem[Hu]{Hu08} 
	J.~Humphreys. 
	{\em Representations of semisimple Lie algebras in the BGG category $\mathcal{O}$}. 
	Graduate Studies in Mathematics, vol. {\bf 94}, American Mathematical Society, Providence, RI, 2008.
	
	
	
	\bibitem[Jo]{Jo80}
	A.~Joseph. 
	Kostant's problem, Goldie rank and the Gelfand-Kirillov conjecture.  
	Invent. Math.  {\bf 56}  (1980), no. 3, 191--213.
	
	
	\bibitem[Ka1]{Ka1}
	V.~Kac.
	{\em Lie superalgebras}.
	Adv. Math. {\bf 16} (1977), 8--96.
	
   \bibitem[Ka2]{Ka2}
	V.~Kac.
	{\em Representations of classical Lie superalgebras, Differential geometrical methods in
	mathematical physics}, II (Proc. Conf., Univ. Bonn, Bonn, 1977), Lecture Notes in Math.,
	vol. {\bf 676}, Springer, 1978, pp. 597--626.
	
 %	\bibitem[KMM1]{KMM1} 	H.~Ko, V.~Mazorchuk and R.~Mr{\dj}en. {\em Some homological properties of category $\mc O$, V}. Preprint \href{https://arxiv.org/pdf/2008.08864.pdf}{arXiv:2007.00342}.
	
    %\bibitem[KMM2]{KKM} H.~Ko, V.~Mazorchuk and R.~Mr{\dj}en. {\em Bigrassmannian permutations and Verma modules}. Preprint 	\href{https://arxiv.org/pdf/2008.08864.pdf}{arXiv::2008.08864}.
	
	\bibitem[Ko]{Ko78} 
	B.~Kostant. 
		{\em On Whittaker vectors and representation theory}.
		Inv. Math. {\bf 48.2} (1978): 101--184.
	
	
		%	\bibitem[Ku]{Ku} S.~Kumar. 	{\em Kac-Moody groups, their flag varieties and representation theory}. Vol. {\bf 204}. Springer Science Business Media, 2012.
	
	% \bibitem[LLW]{LLW} J.~Liu, L.~Luo and W.~Wang. {\em Odd singular vector formula for general linear lie superalgebras}, 	Bulletin of the Institute of Mathematics 	Academia Sinica (New Series), 	vol. {\bf 14} (2019), No. 4, pp. 389--402
	
	
		\bibitem[KL]{KL1} 
	D.~Kazhdan, G.~Lusztig, 
	{\em Representations of Coxeter groups and Hecke algebras}, 
	Invent. Math.
	{\bf 53} (1979), 165--184.
	
			\bibitem[KM]{KM} 
	O.~Khomenko; V.~Mazorchuk,
	{\em Structure of modules induced from simple modules with minimal annihilator.}
	Canad. J. Math. {\bf 56} (2004), no. 2, 293--309.
	
		 \bibitem[LZ]{LZ1}
	R.~Lu and K.~Zhao.
	{\em Generalized oscillator representations of the twisted Heisenberg-Virasoro algebra}.
	Algebr. Represent. Theory, {\bf 23. 4} (2020), 1417--1442.
	
	\bibitem[LWZ]{LWZ} 
	D.~Liu, Y.~Wu, L.~Zhu. 
	{\em Whittaker modules for the twisted Heisenberg-
	Virasoro algebra.} 
 J. Math. Physics (2010)	51:023524, 12p.
	



%	\bibitem[Ma1]{Ma1} 	V.~Mazorchuk. {\em Some homological properties of the category 	$\mc O$}. Pacific J. Math. {\bf 232} (2007), no. 2, 313--341. 
	
%	\bibitem[Ma2]{Ma2} 	V.~Mazorchuk. {\em Some homological properties of the category 	$\mc O$, II}. Represent. Theory {\bf 14} (2010), 249--263.
	
	\bibitem[Ma]{Ma}
	V.~Mazorchuk.
	{\em Parabolic category $\mc O$ for classical Lie superalgebras}.
	Advances in Lie Superalgebras. 
	Springer International Publishing, 2014, 149--166.
	
	\bibitem[Mc1]{Mc}
	 E. McDowell, 
	 {\em On modules induced from Whittaker modules}.
	 J. Algebra {\bf 96} (1985), 161--177.
	 
	 	\bibitem[Mc2]{Mc2}
		 E. McDowell, 
	  {\em  A module induced from a Whittaker module}.
	 Proc. Amer. Math.  Soc. 118 (1993), no. 2, 349--354.
	 
	 \bibitem[Mu]{Mu12}
	 I.~Musson.
	 {\em Lie superalgebras and enveloping algebras}.
	 Graduate Studies in Mathematics, 131. 
	 American Mathematical Society, Providence, RI, 2012.
	
	
	\bibitem[MM]{MaMe12}V.~Mazorchuk and V.~Miemietz, 
	{\em Serre functors for Lie algebras and superalgebras},
	Annales de linstitut Fourier, {\bf 62} (2012), No. 1, 47--75.
	
		
	\bibitem[MSt]{MaSt08} 
	V.~Mazorchuk, C.~Stroppel. 
	Categorification of  (induced) cell modules and the rough structure of generalised  Verma modules. 
	Adv. Math. {\bf 219} (2008), no. 4, 1363--1426. 
	
	
		\bibitem[MS1]{MS}
	D. Mili{\v{c}}i{\'c} and W. Soergel. 
	{\em The composition series of modules induced from Whittaker modules}.
	Comment. Math. Helv. {\bf 72}.(1997), no.4, 503--520.  \href{https://doi.org/10.1007/s000140050031}{https://doi.org/10.1007/s000140050031}.
	
	\bibitem[MS2]{MS2}
	D. Mili{\v{c}}i{\'c}, W. Soergel.
	{\em Twisted Harish-Chandra sheaves and Whittaker modules}; Preprint, Freiburg University, 1995.
	
	
	
		\bibitem[O]{O1}
	M.~Ondrus. . 
	{\em Whittaker modules for $U_q(\mf{sl_2})$.} J. Algebra (2005) 289:192--213.
	
		\bibitem[OW]{OW}
	M.~Ondrus, E.~Wiesner. {\em  Whittaker modules for the Virasoro algebra.} 
	J. Algebra Appl. (2009) 8:363--377.
	
%	\bibitem[PS1]{PS89} 	I.~Penkov and V.~Serganova.	{\em Cohomology of $G/P$ for classical complex Lie supergroups $G$ and characters of some atypical $G$-modules}. 	Ann. Inst. Fourier {\bf 39} (1989), 845--873.
	
	
%	\bibitem[PS2]{PS} I.~Penkov and V.~Serganova, {\em Generic irreducible representations of finite-dimensional Lie superalgebras}, 	Internat.~J.~Math.~{\bf 5} (1994) 389--419.
	
%	\bibitem[RC]{RC} A.~Rocha-Caridi. {\em Splitting criteria for 	$\mathfrak{g}$-modules induced from a parabolic and the 	Bernstein-Gel'fand-Gel'fand resolution of a finite-dimensional, irreducible $\mathfrak{g}$-module}. Trans. Amer. Math. Soc. 	{\bf 262} (1980), no. 2, 335--366.

	\bibitem[Ro]{R1}
 A.~Romanov. 
 {\em A Kazhdan-Lusztig Algorithm for Whittaker Modules.} Algebr. Represent. Theory (2020). \href{https://doi.org/10.1007/s10468-019-09934-z}{https://doi.org/10.1007/s10468-019-09934-z}
	
	\bibitem[Se1]{Sev} 
	A.~Sevostyanov. 
	{\em Quantum deformation of Whittaker modules and the Toda lattice.}
	Duke Math. J. (2000) 105:211--238.
	
	
	\bibitem[Se2]{Se02}
	V.~Serganova.
	{\em On representations of the Lie superalgebra $\mf p(n)$}.
	J. Algebra 
	{\bf 258} (2) (2002), 615--630.
	
	
	
	\bibitem[So]{So90}
	W.~Soergel.
	{\em Kategorie $\mc O$, perverse Garben und Moduln \"uber den Koinvarianten zur Weylgruppe}.
	J. Amer. Math. Soc. {\bf 3} (1990), no. 2, 421--445. 
	
	
	\bibitem[Wa]{Wa2}
	B.~Wang.  
	{\em Whittaker modules for graded Lie algebras.} Algebr. Represent. Theory (2010) 1--12.
	
		\bibitem[Xi]{Xi}
		H.~Xiao
	{\em On finite dimensional representations of basic W-superalgebras}.
	Preprint.
	 \href{https://arxiv.org/abs/2101.07345}{https://arxiv.org/abs/2101.07345} 
	
	%\bibitem[Zu]{Zu}
	%G.~	Zuckerman. 
	%{\em Generalized Harish-Chandra modules}. Highlights in Lie algebraic methods,   Progr. Math {\bf 295}, 123--143.
	


		\bibitem[ZS]{ZS}
 Y.~Zeng, B.~Shu
{\em Finite W-superalgebras for basic Lie superalgebras}
J. Algebra, 438 (2015), pp. 188--234
\end{thebibliography}
\end{document}